\title{Constraint equations for $3+1$ vacuum Einstein equations with a translational space-like Killing field in the asymptotically flat case II}
\author{Cécile Huneau}
\newtheorem{thm}{Theorem}[section]
\newtheorem{prp}[thm]{Proposition}
\newtheorem{cor}[thm]{Corollary}
\newtheorem{lm}[thm]{Lemma}
\newtheorem{df}[thm]{Definition}
\newtheorem{rk}[thm]{Remark}
\newcommand{\m}[1]{\mathbb{#1}}
\newcommand{\ep}{\varepsilon}
\newcommand{\q}[1]{\mathcal{#1}}
\newcommand{\wht}[1]{\widetilde{#1}}
\newcommand{\gra}[1]{\mathbf{#1}}
\newcommand{\grat}[1]{\mathbf{\widetilde{#1}}}
\begin{document}

\maketitle

\begin{abstract}
 We solve the Einstein constraint equations for a $3+1$ dimensional vacuum space-time with a space-like translational Killing field in the asymptotically flat case.
The presence of a space-like translational Killing field allows for a reduction of the $3+1$ dimensional problem to a $2+1$ dimensional one.
The aim of this paper is to go further in the asymptotic expansion of the solutions than in \cite{moi}. In particular the expansion we construct involves quantities which are the 2-dimensional equivalent of the global charges.
\end{abstract}

\section{Introduction}
Einstein equations can be formulated as a Cauchy problem whose initial data must satisfy compatibility conditions known as the constraint equations.
In this paper, we will consider the constraint equations for the vacuum Einstein equations, in the particular case where the space-time possesses a 
space-like translational Killing field. It allows for a reduction of the $3+1$ dimensional
problem to a $2+1$ dimensional one. This symmetry has been studied by Choquet-Bruhat and Moncrief in \cite{choquet} (see also \cite{livrecb})
in the case of a
space-time of the form $\Sigma \times \mathbb{S}^1  \times \mathbb{R}$, where $\Sigma$ is a compact two dimensional manifold of genus $G\geq 2$, and $\mathbb{R}$
is the time axis, with a space-time metric independent of the $\mathbb{S}^1$ coordinate. 
They prove the existence of global solutions corresponding to perturbation of particular expanding initial data.

In this paper we consider a space-time of the form $\mathbb{R}^2 \times \mathbb{R}_{x_3} \times \mathbb{R}_{t}$, symmetric with respect to the third coordinate.
Minkowski space-time is a particular solution of vacuum Einstein equations which exhibits this symmetry. Since the celebrated work of Christodoulou and Klainerman (see \cite{ck}), we know that Minkowski space-time is stable, that is to say asymptotically flat perturbations of the trivial initial data lead to global solutions converging to Minkowski space-time.
It is an interesting problem to ask whether
the stability also holds in the setting of perturbations of Minkowski space-time with a space-like translational Killing field. Let's note that it is not included in the work of Christodoulou and Klainerman. However, it is crucial, before considering this problem, 
to ensure the existence of compatible initial data. In \cite{moi}, we proved the existence of solutions to the constraint equations. The purpose of this paper is to go further in the asymptotic development of the solutions to the constraint equations. The solutions we construct in this paper are actually the one used in \cite{quasistab}
to prove the stability in exponential time of Minkowski space-time with a space-like translational Killing field.

In the compact case, if one looks for solutions with constant mean curvature, as it is done in \cite{choquet}, the issue of solving the constraint equations is straightforward. 
Every metric on a compact manifold of genus $G\geq 2$ is conformal
to a metric of scalar curvature $-1$. As a consequence, it is possible to decouple the system into elliptic
scalar equations of the form $\Delta u = f(x,u)$ with $\partial_u f >0$, for which existence results  are standard (see for example chapter $14$ in \cite{tay}).

The asymptotically flat case is more challenging. First, the definition of an asymptotically flat manifold is not so clear in two dimension. 
In \cite{beck}, \cite{asht}, \cite{chru} radial solutions of the $2+1$ dimensional problem with an angle at space-like infinity are constructed.
In particular, these solutions do not tend to the Euclidean metric at space-like infinity. Moreover, the behavior of the Laplace operator on $\m R^2$ makes the issue of finding solutions to the constraint equations
more intricate.

\subsection{Reduction of the Einstein equations}
Before discussing the constraint equations, we first briefly recall the form of the Einstein equations in the presence of a space-like translational 
Killing field. We follow here the exposition
in \cite{livrecb}.
A metric $^{(4)}\mathbf{g}$ on  $\mathbb{R}^2 \times \m R  \times \mathbb{R}$ admitting $\partial_3$ as a Killing field can be written
$$^{(4)}\mathbf{g} = \grat g + e^{2\gamma}(dx^3 +A_\alpha dx^\alpha)^2, $$
where $\grat g$ is a Lorentzian metric on $\mathbb{R}^{1+2}$, $\gamma$ is a scalar function on $\mathbb{R}^{1+2}$, $A$ is a $1$-form on
$\mathbb{R}^{1+2}$ and $x^\alpha$, $\alpha=0,1,2$, are the coordinates on $\mathbb{R}^{1+2}$. Since $\partial_3$ is a Killing field, $\mathbf{g}$, $\gamma$ and $A$ do not depend on $x^3$.
We set $F=dA$, where $d$ is the exterior differential. $F$ is then a $2$-form. Let also $^{(4)}\mathbf{R}_{\mu \nu}$ denote the Ricci tensor associated to $^{(4)}\mathbf{g}$.
$\grat R_{\alpha \beta}$ and $\grat D$ are respectively the Ricci tensor and the covariant derivative associated to $\grat g$.

With this metric, the vacuum Einstein equations
$$^{(4)}\mathbf{R}_{\mu \nu} = 0, \; \mu, \nu = 0,1,2,3$$
can be written in the basis $(dx^\alpha, dx^3+A_\alpha dx^\alpha)$  (see \cite{livrecb} appendix VII)
\begin{align}
\label{chap2:rab}0= ^{(4)}\gra R_{\alpha \beta} &= \grat R_{\alpha \beta}-\frac{1}{2}e^{2\gamma}{F_\alpha}^\lambda F_{\beta \lambda}
 -\grat D_\alpha \partial_\beta \gamma -\partial_\alpha \gamma \partial_\beta \gamma,\\
\label{chap2:ra3} 0=^{(4)}\gra R_{\alpha 3} &= \frac{1}{2}e^{-\gamma} \grat D_\beta (e^{3\gamma}{F_\alpha}^\beta),\\
\label{chap2:r33} 0=^{(4)}\gra R_{33}&= -e^{-2\gamma} \left( -\frac{1}{4}e^{2\gamma}F_{\alpha \beta}F^{\alpha \beta} + \grat g^{\alpha \beta}\partial_\alpha \gamma \partial_\beta \gamma
 + \grat g^{\alpha \beta}\grat D_\alpha \partial_\beta \gamma\right).
\end{align} 

The equation \eqref{chap2:ra3} is equivalent to 
$$d(\ast e^{3\gamma}F)=0$$
where $\ast e^{3\gamma}F$ is the adjoint one form associated to $e^{3\gamma}F$.
This is equivalent, on $\m R^{1+2}$, to the existence of a potential $ \omega$ such that
$$\ast e^{3\gamma}F=d\omega.$$
Since $F$ is a closed $2$-form, we have $dF=0$. By doing the conformal change of metric $\grat g = e^{-2\gamma}\gra g$, 
this equation, together with the equations \eqref{chap2:rab} and \eqref{chap2:r33}, yield the following system,

\begin{align}
\label{chap2:wavebis} &\Box_{\gra g} \omega -4 \partial^\alpha \gamma \partial_\alpha \omega=0,\\
 \label{chap2:wave}& \Box_\mathbf{g} \gamma +\frac{1}{2}e^{-4\gamma} \partial^\alpha \omega \partial_\alpha \omega= 0, \\
\label{chap2:einst2} &\mathbf{R}_{\alpha \beta} = 2\partial_\alpha \gamma\partial_\beta \gamma +\frac{1}{2}e^{-4\gamma}\partial_\alpha \omega \partial_\beta \omega, \; \alpha,\beta = 0,1,2,
\end{align}
where $\Box_\mathbf{g}$ is the d'Alembertian\footnote{$\Box_\mathbf{g}$ is the
Lorentzian equivalent of the Laplace-Beltrami operator in Riemannian geometry. In a coordinate system,
we have $\Box_\mathbf{g} u = \frac{1}{\sqrt{|\mathbf{g}|}}\partial_\alpha(\mathbf{g}^{\alpha \beta}\sqrt{|\mathbf{g}|}\partial_\beta u)$.} in the metric $\mathbf{g}$
 and  $\mathbf{R}_{\alpha \beta}$ is the Ricci tensor associated to $\mathbf{g}$.
We introduce the following notation
\begin{equation}\label{chap2:defubis}
 u \equiv (\gamma,\omega),
\end{equation}
together with the scalar product
\begin{equation}\label{chap2:defu}
\partial_\alpha u. \partial_\beta u=2\partial_\alpha \gamma \partial_\beta \gamma + \frac{1}{2}e^{-4\gamma}\partial_\alpha \omega \partial_\beta \omega.
\end{equation}

We consider the Cauchy problem for the equations \eqref{chap2:wavebis}, \eqref{chap2:wave} and \eqref{chap2:einst2}.
As it is in the case for the $3+1$ Einstein equation, the initial data for \eqref{chap2:wavebis}, \eqref{chap2:wave} and \eqref{chap2:einst2} cannot be prescribed arbitrarily. They have to satisfy constraint equations.

\subsection{Constraint equations}
 We can write the metric $\mathbf{g}$ under the form
\begin{equation}\label{chap2:metrique}
\mathbf{g} = -N^2(dt)^2 +g_{ij}(dx^i +\beta^i dt)(dx^j + \beta^j dt),
\end{equation}
where the scalar function $N$ is called the lapse, the vector field $\beta$ is called the shift and $g$ is a Riemannian metric on $\m R^2$. 

We consider the initial space-like surface $\m R ^2 = \{t=0\}$. Let $T$ be the unit normal to $\m R ^2 = \{t=0\}$. We set
$$e_0=NT=\partial_t-\beta^j \partial_j.$$
We will use the notation
$$\partial_0=\mathcal{L}_{e_0}=\partial_t - \mathcal{L}_{\beta},$$
where $\mathcal{L}$ is the Lie derivative. With this notation, we have the following 
expression for the second fundamental form of $\m R^2$
$$K_{ij}=-\frac{1}{2N}\partial_0 g_{ij}.$$
We will use the notation 
$$\tau=g^{ij}K_{ij}$$
for the mean curvature. We also introduce the Einstein tensor
$$\mathbf{G}_{\alpha \beta}= \mathbf{R}_{\alpha \beta} - \frac{1}{2}\mathbf{R}\mathbf{g}_{\alpha \beta},$$
where $\mathbf{R}$ is the scalar curvature $\mathbf{R} = \mathbf{g}^{\alpha \beta}\mathbf{R}_{\alpha \beta}$. 
The constraint equations are given by
\begin{align}
 \label{chap2:contrmom} \mathbf{G}_{0j} &\equiv N(\partial_j \tau - D^i K_{ij})=\partial_0 u. \partial_j u, \; j=1,2,\\
\label{chap2:contrham} \mathbf{G}_{00} & \equiv \frac{N^2}{2}(R-|K|^2+ \tau^2)= \partial_0 u. \partial_0 u - \frac{1}{2}\mathbf{g}_{00} \mathbf{g}^{\alpha \beta}\partial_\alpha u \partial_\beta u,
\end{align}
where $D$ and $R$ are respectively the covariant derivative and the scalar curvature associated to $g$ (see \cite{livrecb} chapter VI for a derivation of \eqref{chap2:contrmom} and \eqref{chap2:contrham}).
Equation \eqref{chap2:contrmom} is called the momentum constraint and \eqref{chap2:contrham} is called the Hamiltonian constraint. If we came back to the $3+1$ problem, there should be four constraint equations. However, since the fourth would be obtained by taking $\alpha=0$ in \eqref{chap2:ra3}, it is trivially satisfied if we set $\ast e^{3\gamma}F=d\omega$.

We will look for $g$ of the form $g= e^{2\lambda}\delta$ where $\delta$ is the Euclidean metric on $\mathbb{R}^2$. There is no loss of generality since, up to a diffeomorphism, all
metrics on $\mathbb{R}^2$ are conformal to the Euclidean metric. We introduce the traceless part of $K$, 
$$H_{ij} = K_{ij} - \frac{1}{2}\tau g_{ij},$$
and following \cite{choquet} we introduce the quantity
$$\dot{u} = \frac{e^{2\lambda}}{N}\partial_0 u.$$
Then the equations \eqref{chap2:contrmom} and \eqref{chap2:contrham} take the form
\begin{align}
 \label{chap2:mombis}&\partial_i H_{ij} = -\dot{u}.\partial_j u + \frac{1}{2} e^{2\lambda} \partial_j \tau,\\
\label{chap2:hambis}& \Delta \lambda + e^{-2\lambda}\left(\frac{1}{2}\dot{u}^2 +\frac{1}{2}|H|^2\right)-e^{2\lambda}\frac{\tau^2}{4} + \frac{1}{2}|\nabla u|^2 = 0,
\end{align}
where here and in the remaining of the paper, we use the convention for the Laplace operator
$$\Delta=\partial^2_1+\partial^2_2.$$
\\

The aim of this paper is to solve the coupled system of nonlinear elliptic equations \eqref{chap2:mombis} and \eqref{chap2:hambis} on $\m R^2$ in the small
data case, that is to say when $\dot{u}$ and $\nabla u$ are small. A similar system can be obtained when studying the constraint equations in three
dimensions by using the conformal method, introduced by Lichnerowicz \cite{lich} and Choquet-Bruhat and York \cite{york}.
In the constant mean curvature (CMC) case, that is to say when one sets $\tau=0$,
the constraint equations decouple and
the main difficulty that remains is the study
of the scalar equation \eqref{chap2:hambis}, also called the Lichnerowicz equation\footnote{The resolution of this equation is closely linked to the Yamabe problem}. 
The  CMC solutions have been studied in \cite{york} and \cite{isem} for the compact case, and in \cite{cantor} for the asymptotically flat case.
There have been also some results concerning the coupled constraint equations, i.e. without setting $\tau$ constant
The near CMC solutions in the asymptotically flat case have been studied in \cite{yorkcb}.
The compact case has been studied in
\cite{holst}, \cite{maxwell} and \cite{humbert}. See also \cite{bartnik} for a review of these results.

As in \cite{moi}, the solutions we construct in this paper are of the form
$$\lambda= -\alpha\ln(r)+o(1).$$
As shown by the analysis in \cite{moi}, this logarithmic growth does not contradict asymptotic flatness, but actually corresponds to the deficit angle present in \cite{asht}.

We will do the following rescaling to avoid the $e^{2\lambda}$
and $e^{-2\lambda}$ factors
$$\breve{u}=e^{-\lambda}\dot{u}, \quad \breve{H}=e^{-\lambda}H, \quad \breve{\tau} = e^{\lambda}\tau.$$
Then the equations \eqref{chap2:mombis} and \eqref{chap2:hambis} become
\begin{align*}
&\partial_i \breve{H}_{ij} +\breve{H}_{ij}\partial_i \lambda  = -\breve{u}.\partial_j u + \frac{1}{2} \partial_j \breve{\tau}-\frac{1}{2} \breve{\tau}\partial_j \lambda,\\
& \Delta \lambda + \frac{1}{2}\breve{u}^2 + \frac{1}{2}|\nabla u|^2+\frac{1}{2}|\breve{H}|^2-\frac{\breve{\tau}^2}{4} = 0.
\end{align*}
To lighten the notations, we will omit the $\;\breve{}\;$ in the rest of the paper. We consider therefore the system
\begin{equation}\label{chap2:sys}
\left\{
\begin{array}{l}
\partial_i H_{ij} +H_{ij}\partial_i \lambda  = -\dot{u}.\partial_j u + \frac{1}{2} \partial_j \tau-\frac{1}{2} \tau\partial_j \lambda,\\
\Delta \lambda + \frac{1}{2}\dot{u}^2 + \frac{1}{2}|\nabla u|^2+\frac{1}{2}|H|^2-\frac{\tau^2}{4} = 0. \\
\end{array} \right.
\end{equation}

Before stating the main result, we recall several properties of weighted Sobolev spaces.

\section{Preliminaries}

\subsection{Weighted Sobolev spaces}
In the rest of the paper, $\chi(r)$ denotes a smooth non negative function such that 
$$0 \leq \chi \leq 1, \quad  \chi(r) =0 \; \text{for}\; r \leq 1, \quad \chi(r)=1 \; \text{for} \; r \geq 2.$$
We will also note $f \lesssim h $ when there exists a universal constant $C$ such that $f\leq Ch$.

\begin{df} Let  $m\in \m N$ and $\delta \in \mathbb{R}$. The weighted Sobolev space $H^m_\delta(\mathbb{R}^n)$ is the completion of $C^\infty_0$ for the norm 
 $$\|u\|_{H^m_\delta}=\sum_{|\beta|\leq m}\|(1+|x|^2)^{\frac{\delta +|\beta|}{2}}D^\beta u\|_{L^2}.$$
The weighted Hölder space $C^m_{\delta}$ is the complete space of $m$-times continuously differentiable functions with norm 
$$\|u\|_{C^m_{\delta}}=\sum_{|\beta|\leq m}\|(1+|x|^2)^{\frac{\delta +|\beta|}{2}}D^\beta u\|_{L^\infty}.$$
Let $0<\alpha<1$. The Hölder space $C^{m+\alpha}_\delta$ is the the complete space of $m$-times continuously differentiable functions with norm 
$$\|u\|_{C^{m+\alpha}_{\delta}}=\|u\|_{C^m_\delta} + \sup_{x \neq y, \; |x-y|\leq 1} \frac{|\partial^m u(x)-\partial^m u(y)|(1+|x|^2)^\frac{\delta}{2}}{|x-y|^\alpha}.$$
\end{df}

The following lemma is an immediate consequence of the definition.

\begin{lm}\label{chap2:der} Let $m \geq 1$ and $\delta \in \mathbb{R}$. Then $u \in H^m_\delta$ implies $\partial_j u \in H^{m-1}_{\delta+1}$ for $j=1,..,n$.
\end{lm}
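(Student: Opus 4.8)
The statement is that $u \in H^m_\delta$ implies $\partial_j u \in H^{m-1}_{\delta+1}$. This is essentially immediate from the definition, as the lemma itself advertises, so the "proof" is really just unpacking the norms carefully and matching multi-indices.

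The plan is to start from the definition of the $H^{m-1}_{\delta+1}$-norm applied to $\partial_j u$:
$$\|\partial_j u\|_{H^{m-1}_{\delta+1}} = \sum_{|\beta| \leq m-1} \|(1+|x|^2)^{\frac{(\delta+1)+|\beta|}{2}} D^\beta \partial_j u\|_{L^2}.$$
Then I would observe that for each multi-index $\beta$ with $|\beta| \leq m-1$, the derivative $D^\beta \partial_j u$ equals $D^{\beta'} u$ where $\beta' = \beta + e_j$ is a multi-index with $|\beta'| = |\beta| + 1 \leq m$, and the weight exponent satisfies $\frac{(\delta+1)+|\beta|}{2} = \frac{\delta + |\beta'|}{2}$. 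Hence each term in the sum above equals $\|(1+|x|^2)^{\frac{\delta+|\beta'|}{2}} D^{\beta'} u\|_{L^2}$, which is one of the terms appearing in $\|u\|_{H^m_\delta}$ (namely those with $|\beta'| \geq 1$). Summing over $\beta$ and dropping the nonnegative $|\beta'|=0$ term, I conclude $\|\partial_j u\|_{H^{m-1}_{\delta+1}} \leq \|u\|_{H^m_\delta} < \infty$, and in particular $\partial_j u \in H^{m-1}_{\delta+1}$.

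There is no real obstacle here. The only mild point of care is the density argument: strictly the spaces are defined as completions of $C^\infty_0$, so one should first verify the inequality for $u \in C^\infty_0$ (where all manipulations above are literal) and then extend to general $u \in H^m_\delta$ by continuity of the linear map $\partial_j$ with respect to the relevant norms, using that $C^\infty_0$ is dense in $H^m_\delta$ by definition. This step is routine and I would state it in one sentence.
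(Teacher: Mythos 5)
Your proof is correct and is precisely the routine verification the paper has in mind: it states the lemma as an immediate consequence of the definition and gives no proof. The multi-index/weight bookkeeping and the density remark are exactly what is needed.
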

We first recall the Sobolev embedding with weights (see for example \cite{livrecb}, Appendix I). In the rest of this section, we assume $n=2$.

\begin{prp}\label{chap2:holder} Let $s,m \in \m N$. We assume $s >1$. Let $\beta \leq \delta +1$ and $0<\alpha<min(1,s-1)$. Then, we have the continuous embedding
$$H^{s+m}_{\delta}\subset C^{m+\alpha}_{\beta}.$$
\end{prp}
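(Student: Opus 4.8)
The plan is to reduce the weighted estimate to the classical unweighted Sobolev embedding $H^{s+m}(\Omega)\subset C^{m+\alpha}(\overline{\Omega})$ on a fixed bounded smooth domain $\Omega\subset\m R^2$, by means of a dyadic decomposition of $\m R^2$ together with a rescaling argument. Write $\m R^2 = B_2 \cup \bigcup_{j\geq 1} A_j$ with $A_j = \{2^{j-1}\leq |x| \leq 2^{j+2}\}$; these annuli have bounded overlap, and any two points at distance $\leq 1$ lie in a common $A_j$ or in $B_2$. On $A_j$ one has $(1+|x|^2)^{1/2}\sim 2^j$. By density I may assume $f\in C^\infty_0$. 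The idea is to rescale each $A_j$ onto a fixed annulus $A$ via $x = 2^j y$, set $\tilde f(y) = f(2^j y)$, apply the unweighted embedding on $A$, and then track the powers of $2^j$ produced by the weights.

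Carrying this out, using $D_x^\beta f(2^j y) = 2^{-j|\beta|}D_y^\beta \tilde f(y)$ and $dx = 2^{2j}\,dy$, the powers $2^{j|\beta|}$ coming from the weights $(1+|x|^2)^{(\delta+|\beta|)/2}$ cancel the $2^{-j|\beta|}$ coming from the derivatives, leaving $\|f\|_{H^{s+m}_\delta(A_j)} \sim 2^{j(\delta+1)}\|\tilde f\|_{H^{s+m}(A)}$ (the exponent $1$ being $n/2$ in dimension $n=2$). Likewise $\|f\|_{C^m_\beta(A_j)}\sim 2^{j\beta}\|\tilde f\|_{C^m(A)}$, while the Hölder seminorm term rescales with a factor $2^{j(\beta-m-\alpha)}\leq 2^{j\beta}$ for $j\ge 1$, so $\|f\|_{C^{m+\alpha}_\beta(A_j)}\lesssim 2^{j\beta}\|\tilde f\|_{C^{m+\alpha}(A)}$. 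Now the classical Sobolev embedding on the fixed domain $A$ applies: since $n=2$ and $0<\alpha<s-1$, we have $(s+m)-\tfrac n2 = s+m-1 > m+\alpha$, hence $\|\tilde f\|_{C^{m+\alpha}(A)}\lesssim \|\tilde f\|_{H^{s+m}(A)}$ with a constant independent of $j$. Combining the three comparisons,
\[
\|f\|_{C^{m+\alpha}_\beta(A_j)} \lesssim 2^{j\beta}\|\tilde f\|_{H^{s+m}(A)} \sim 2^{j(\beta-\delta-1)}\|f\|_{H^{s+m}_\delta(A_j)}.
\]
Since $\beta \leq \delta+1$, the exponent $\beta-\delta-1$ is $\leq 0$, so $2^{j(\beta-\delta-1)}\leq 1$ for every $j\geq 1$; taking the supremum over $j$ and bounding it by $\big(\sum_j \|f\|_{H^{s+m}_\delta(A_j)}^2\big)^{1/2}\lesssim \|f\|_{H^{s+m}_\delta}$ gives the far-field part. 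On $B_2$ all weights are bounded above and below, so $\|f\|_{C^{m+\alpha}_\beta(B_2)}\lesssim\|f\|_{C^{m+\alpha}(B_2)}\lesssim\|f\|_{H^{s+m}(B_2)}\lesssim\|f\|_{H^{s+m}_\delta}$ by the ordinary Sobolev embedding. Because the $C^{m+\alpha}_\beta$-norm is a supremum and the two points in its seminorm always lie in a single $A_j$ or in $B_2$, the global bound follows, and a density argument extends it from $C^\infty_0$ to all of $H^{s+m}_\delta$.

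There is no deep obstacle here; the argument is essentially bookkeeping, and the interesting point is simply to see where the hypotheses enter. The step needing the most care is the Hölder seminorm under rescaling: one must check that restricting the constraint $|x-y|\leq 1$ to a single enlarged annulus costs nothing, and that its rescaling factor $2^{j(\beta-m-\alpha)}$ is dominated by $2^{j\beta}$, so that the whole estimate is governed purely by the relation $\beta\leq\delta+1$ (which is exactly what forces this hypothesis), while the condition $\alpha<s-1$ is exactly what is needed for the unweighted embedding on the fixed annulus. A second mild subtlety is that the weighted norms are sums, not $\ell^2$-sums, over derivative orders and over annuli, so one passes between $\sup_j$ and $\sum_j$ using $\sup_j a_j \leq (\sum_j a_j^2)^{1/2}$ and the finite overlap of the $A_j$. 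An alternative would be to argue from a Green-function representation of $f$ in terms of its top-order derivatives, but the dyadic rescaling proof is cleaner and is the one implicit in \cite{livrecb}.
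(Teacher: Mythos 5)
Your proof is correct. The paper does not actually prove this proposition — it is simply recalled from \cite{livrecb}, Appendix I — and your dyadic decomposition with rescaling of each annulus onto a fixed reference annulus is precisely the standard argument for such weighted embeddings; the bookkeeping (cancellation of the $2^{j|\beta|}$ factors against the derivative scaling, the residual $2^{j(\delta+1)}$ from the weight and the volume element in dimension $2$, the use of $\beta\leq\delta+1$ to make the final exponent nonpositive, and the strict inequality $\alpha<s-1$ guaranteeing the unweighted embedding on the fixed annulus with a $j$-independent constant) is all handled correctly.
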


We will also need a product rule.

\begin{prp}\label{chap2:produit}
 Let $s,s_1,s_2 \in \m N$. We assume $s\leq \min(s_1,s_2)$ and $s<s_1+s_2-1$. Let $\delta<\delta_1+\delta_2+1$. Then $\forall (u,v) \in H^{s_1}_{\delta_1}\times H^{s_2}_{\delta_2}$,
$$\|uv\|_{H^s_\delta} \lesssim \|u\|_{H^{s_1}_{\delta_1}} \|v\| _{H^{s_2}_{\delta_2}}.$$
\end{prp}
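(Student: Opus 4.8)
The plan is to reduce the estimate to the case $s=0$ by the Leibniz rule and Lemma \ref{chap2:der}, and then to prove the resulting bilinear estimate by a weighted Hölder inequality combined with the Sobolev embeddings at the appropriate (including borderline) exponents in dimension $2$. First I would write, for $|\beta|\le s$,
$$\big\|(1+|x|^2)^{(\delta+|\beta|)/2}D^\beta(uv)\big\|_{L^2}\lesssim \sum_{\beta_1+\beta_2=\beta}\big\|(1+|x|^2)^{(\delta+|\beta_1|+|\beta_2|)/2}\,(D^{\beta_1}u)(D^{\beta_2}v)\big\|_{L^2}.$$
Iterating Lemma \ref{chap2:der} gives $D^{\beta_1}u\in H^{a}_{\mu_1}$ with $a=s_1-|\beta_1|\ge s_1-s\ge0$, $\mu_1=\delta_1+|\beta_1|$, and $D^{\beta_2}v\in H^{b}_{\mu_2}$ with $b=s_2-|\beta_2|\ge0$, $\mu_2=\delta_2+|\beta_2|$, with norms controlled by $\|u\|_{H^{s_1}_{\delta_1}}$ and $\|v\|_{H^{s_2}_{\delta_2}}$. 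Setting $\mu=\delta+|\beta_1|+|\beta_2|$ one has $\mu<\mu_1+\mu_2+1$, and from $s<s_1+s_2-1$ one gets $a+b=s_1+s_2-|\beta|\ge2$. So it suffices to prove: for $f\in H^{a}_{\mu_1}$, $g\in H^{b}_{\mu_2}$ with $a,b\ge0$, $a+b\ge2$, $\mu<\mu_1+\mu_2+1$, one has $\|(1+|x|^2)^{\mu/2}fg\|_{L^2}\lesssim \|f\|_{H^{a}_{\mu_1}}\|g\|_{H^{b}_{\mu_2}}$.

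Since $a+b\ge2$ (and $a,b$ are non‑negative integers), either $\max(a,b)\ge2$ or $a=b=1$. In the first case, say $a\ge2$, I would use Proposition \ref{chap2:holder} (with $m=0$, $s=a>1$) to embed $H^a_{\mu_1}\subset C^0_{\mu_1+1}$, split the weight as $(1+|x|^2)^{\mu/2}=(1+|x|^2)^{\kappa/2}(1+|x|^2)^{(\mu-\kappa)/2}$, and apply Hölder with $L^\infty\times L^2$:
$$\|(1+|x|^2)^{\mu/2}fg\|_{L^2}\le \|(1+|x|^2)^{\kappa/2}f\|_{L^\infty}\,\|(1+|x|^2)^{(\mu-\kappa)/2}g\|_{L^2}\lesssim \|f\|_{H^a_{\mu_1}}\|g\|_{H^0_{\mu_2}}.$$
This requires $\kappa\le\mu_1+1$ and $\mu-\kappa\le\mu_2$, and such a $\kappa$ exists precisely because $\mu<\mu_1+\mu_2+1$. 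The case $b\ge2$ is symmetric.

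It remains to treat $a=b=1$, which genuinely occurs (e.g.\ $s_1=s_2=s=2$, $|\beta_1|=|\beta_2|=1$). Here neither factor embeds into a weighted $C^0$, so I would instead invoke the weighted Sobolev embedding $H^1_\nu(\m R^2)\hookrightarrow L^p_{\nu+1-2/p}(\m R^2)$, valid for every $p\in[2,\infty)$ — the family interpolating between $H^1_\nu\subset L^2_\nu$ and the endpoint $C^0_{\nu+1}$ of Proposition \ref{chap2:holder}. Taking $p=q=4$ in Hölder ($\tfrac12=\tfrac14+\tfrac14$) and $\kappa_1=\mu_1+\tfrac12$, $\kappa_2=\mu_2+\tfrac12$ gives
$$\|(1+|x|^2)^{\mu/2}fg\|_{L^2}\le \|(1+|x|^2)^{\kappa_1/2}f\|_{L^4}\,\|(1+|x|^2)^{\kappa_2/2}g\|_{L^4}\lesssim \|f\|_{H^1_{\mu_1}}\|g\|_{H^1_{\mu_2}},$$
where the weight split is legitimate since $\kappa_1+\kappa_2=\mu_1+\mu_2+1>\mu$. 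Summing over $\beta,\beta_1,\beta_2$ then yields the stated estimate. The main obstacle is precisely this last case: because $n=2$, when both factors keep only one derivative of regularity one cannot afford an $L^\infty$ bound, and one must check that the borderline $L^4$–$L^4$ Sobolev embeddings exactly close up the weights — which works only thanks to $2/p+2/q=1$ and the strict inequality $\delta<\delta_1+\delta_2+1$. One should also confirm that the weighted $L^p$ Sobolev inequality on $\m R^2$ is available (it is classical but is not among the facts recalled above, so it must be quoted or proved), and check that all implicit constants depend only on $s,s_1,s_2,\delta,\delta_1,\delta_2$ and not on $u,v$.
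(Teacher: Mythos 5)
Your argument is correct, and it is worth noting that the paper itself gives no proof of this proposition: it is recalled as a standard preliminary on weighted Sobolev spaces (results of this type go back to Choquet-Bruhat--Christodoulou and appear, e.g., in \cite{livrecb}), so you have supplied a self-contained derivation where the paper simply quotes a known fact. The bookkeeping checks out: Leibniz plus Lemma \ref{chap2:der} reduces everything to the bilinear $L^2$ bound with $a+b=s_1+s_2-|\beta|\ge s_1+s_2-s\ge 2$ (this is exactly where $s<s_1+s_2-1$ enters) and $\mu<\mu_1+\mu_2+1$ (where $\delta<\delta_1+\delta_2+1$ enters), the case split $\max(a,b)\ge 2$ versus $a=b=1$ is exhaustive since $a,b\ge 0$ are integers with $a+b\ge2$, and the weight-splitting inequalities in both cases require precisely $\mu\le\mu_1+\mu_2+1$, which holds. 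The only ingredient outside the paper's stated toolbox is the weighted embedding $H^1_\nu(\m R^2)\hookrightarrow L^p_{\nu+1-2/p}$ for $2\le p<\infty$, which you correctly flag; it is indeed classical and follows from the usual dyadic-annulus rescaling argument (apply the unweighted $H^1(A)\subset L^p(A)$ on a fixed annulus $A$ to $u(2^j\cdot)$, track the powers of $2^j$, and sum using $\ell^2\subset\ell^p$), with constants depending only on the indices. If this were to be written into the paper, that embedding should be stated as a lemma alongside Proposition \ref{chap2:holder}; otherwise the proof is complete as it stands.
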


The following simple lemma will be useful as well.

\begin{lm}\label{chap2:produit2}
Let $\alpha \in \mathbb{R}$ and $g \in L^\infty_{loc}$ be such that
$$|g(x)| \lesssim (1+|x|^2)^\alpha.$$
Then the multiplication by $g$ maps $H^0_{\delta}$ to $H^0_{\delta -2\alpha}$.
\end{lm}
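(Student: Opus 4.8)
The plan is to reduce the statement to a pointwise inequality between the integrands of the two weighted $L^2$ norms, so the proof is essentially immediate. Recall that, by definition, the norm on $H^0_\delta$ is $\|u\|_{H^0_\delta}=\|(1+|x|^2)^{\delta/2}u\|_{L^2}$, and that $H^0_\delta$ is concretely realized as the space of measurable functions $u$ on $\m R^n$ with $(1+|x|^2)^{\delta/2}u\in L^2$ (the space $C^\infty_0$ being dense in the latter for precisely this norm). It therefore suffices to work with this concrete realization and to check that multiplication by $g$ sends it boundedly into the analogous space with weight exponent $\delta-2\alpha$.

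So let $u\in H^0_\delta$ and put $v=gu$; since $g\in L^\infty_{loc}$ and $u\in L^2_{loc}$, the function $v$ is measurable and locally square integrable. Using the hypothesis $|g(x)|\lesssim (1+|x|^2)^\alpha$, for almost every $x$ one has
$$(1+|x|^2)^{\frac{\delta-2\alpha}{2}}\,|v(x)| = (1+|x|^2)^{\frac{\delta-2\alpha}{2}}\,|g(x)|\,|u(x)| \lesssim (1+|x|^2)^{\frac{\delta-2\alpha}{2}}(1+|x|^2)^{\alpha}\,|u(x)| = (1+|x|^2)^{\frac{\delta}{2}}\,|u(x)|.$$
Squaring this pointwise bound and integrating over $\m R^n$ yields
$$\|gu\|_{H^0_{\delta-2\alpha}} = \big\|(1+|x|^2)^{\frac{\delta-2\alpha}{2}}\,gu\big\|_{L^2} \lesssim \big\|(1+|x|^2)^{\frac{\delta}{2}}\,u\big\|_{L^2} = \|u\|_{H^0_\delta},$$
which shows both that $gu\in H^0_{\delta-2\alpha}$ and that the multiplication operator is bounded, as claimed.

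There is no real obstacle in this argument; the only point worth a remark is the identification of the completion $H^0_\delta$ with the concrete weighted Lebesgue space, which is standard. If one prefers to avoid invoking it, one argues by density instead: the estimate above applied to $u\in C^\infty_0$ shows that multiplication by $g$ is bounded from $(C^\infty_0,\|\cdot\|_{H^0_\delta})$ into $H^0_{\delta-2\alpha}$, hence extends uniquely to a bounded operator $H^0_\delta\to H^0_{\delta-2\alpha}$, and this extension coincides with the pointwise product $gu$ for general $u$ by extracting an a.e.-convergent subsequence from an approximating sequence in $C^\infty_0$.
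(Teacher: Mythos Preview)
Your proof is correct. The paper in fact states this lemma without proof, describing it only as a ``simple lemma,'' so there is no argument to compare against; your direct pointwise estimate on the weighted integrand is exactly the intended one-line justification.
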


We will also need the following modified version of Lemma \eqref{chap2:produit2}.

\begin{lm}\label{chap2:produit3}
Let $\alpha \in \mathbb{R}$ and $g_1 \in L^\infty_{loc}$ be a  function such that
$$|g_1(x)| \lesssim (1+|x|^2)^\alpha.$$
Let $g_2\in L^2(\m S^1)$. Then the multiplication by $g_1(x)g_2(\theta)$ maps $H^1_{\delta}$ to $H^0_{\delta -2\alpha}$.
\end{lm}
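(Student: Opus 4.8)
The plan is to prove the estimate $\|g_1 g_2 u\|_{H^0_{\delta-2\alpha}} \lesssim \|u\|_{H^1_\delta}$ first for $u \in C_0^\infty(\m R^2)$ and then to conclude by the density of $C_0^\infty$ in $H^1_\delta$. Working in polar coordinates $x = r(\cos\theta,\sin\theta)$ and writing $v(r,\theta) = u(x)$, the bound $|g_1(x)|^2 \lesssim (1+|x|^2)^{2\alpha}$ gives
\begin{align*}
\|g_1 g_2 u\|_{H^0_{\delta-2\alpha}}^2 &= \int_{\m R^2}(1+|x|^2)^{\delta-2\alpha}|g_1(x)|^2|g_2(\theta)|^2|u(x)|^2\,dx\\
&\lesssim \int_0^{\infty}(1+r^2)^{\delta}\left(\int_0^{2\pi}|g_2(\theta)|^2|v(r,\theta)|^2\,d\theta\right)r\,dr.
\end{align*}
Since $g_2$ is only $L^2$ on the circle --- this is what distinguishes the statement from Lemma \ref{chap2:produit2} --- the inner integral cannot be handled by an $L^2$ bound on $v(r,\cdot)$ alone; instead I would estimate it by $\|g_2\|_{L^2(\m S^1)}^2\,\|v(r,\cdot)\|^2_{L^\infty(\m S^1)}$ using Cauchy--Schwarz in $\theta$.

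The next step is the one-dimensional Sobolev embedding $H^1(\m S^1) \hookrightarrow L^\infty(\m S^1)$ in the angular variable, which has a universal constant because the circle $\{\theta \in [0,2\pi)\}$ has fixed length; it is essential here that one uses the angle $\theta$ and not the arclength on the circle of radius $r$, so that no power of $r$ is produced. This yields
$$\|v(r,\cdot)\|^2_{L^\infty(\m S^1)} \lesssim \int_0^{2\pi}|v(r,\theta)|^2\,d\theta + \int_0^{2\pi}|\partial_\theta v(r,\theta)|^2\,d\theta,$$
and it remains to integrate each piece against $(1+r^2)^\delta r\,dr$. The first piece gives back $\|u\|_{H^0_\delta}^2 \le \|u\|_{H^1_\delta}^2$ upon returning to Cartesian coordinates. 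For the second piece I would use $\partial_\theta v = -x_2\partial_1 u + x_1 \partial_2 u$, hence $|\partial_\theta v|\le r|\nabla u|$, together with $(1+r^2)^\delta r^2 \lesssim (1+r^2)^{\delta+1}$, to bound it by $\int_{\m R^2}(1+|x|^2)^{\delta+1}|\nabla u|^2\,dx = \|\nabla u\|^2_{H^0_{\delta+1}}$, which is $\lesssim \|u\|_{H^1_\delta}^2$ by Lemma \ref{chap2:der}. Summing the two contributions gives the claim for $u \in C_0^\infty$, and density then finishes the proof.

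The only real obstacle is the low regularity of the angular factor $g_2$: it forces the use of the full $H^1(\m S^1)$ control of $v(r,\cdot)$, and one must check that the cost of the tangential derivative $\partial_\theta$ --- exactly one power of $r$ --- is matched by the weight gain from $H^1_\delta$ to $H^0_{\delta+1}$ recorded in Lemma \ref{chap2:der}. Carrying out the argument for smooth compactly supported $u$ from the outset removes any concern about restricting $u$ to circles or about the change of variables at the origin, since the polar Jacobian $r$ vanishes there.
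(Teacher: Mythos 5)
Your proof is correct and follows essentially the same route as the paper: pass to polar coordinates, pull out $\|g_2\|_{L^2(\m S^1)}$ by Cauchy--Schwarz, control $\sup_\theta|u(r,\cdot)|$ via the Sobolev embedding $W^{1,2}(\m S^1)\subset L^\infty(\m S^1)$, and pay for the angular derivative $|\partial_\theta u|\le r|\nabla u|$ with the weight shift from $\delta$ to $\delta+1$ supplied by Lemma \ref{chap2:der}. The explicit reduction to $u\in C_0^\infty$ and density is a harmless (and slightly more careful) addition to what the paper writes.
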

\begin{proof}
Let $u\in H^1_{\delta}$. We estimate
\begin{align*}
&\int_0^\infty \int_0^{2\pi} (1+r^2)^{\delta-2\alpha}g_1(x)^2g_2(\theta)^2u(r,\theta)^2rdrd\theta\\
&\leq \|g_2\|^2_{L^2(\m S^1)}\int_0^\infty (1+r^2)^{\delta-2\alpha}\left(\sup_{\theta \in [0,2\pi]}|g_1|(r,\theta)\right)^2\left(\sup_{\theta \in [0,2\pi]}|u|(r,\theta)\right)^2rdr\\
&\lesssim \|g_2\|^2_{L^2(\m S^1)}\int_0^\infty(1+r^2)^{\delta}\left(\int_0^{2\pi} |u|^2 +|\partial_\theta u|^2d\theta\right) rdr\\
&\lesssim \|g_2\|^2_{L^2(\m S^1)}\left(\int (1+r^2)^{\delta}u^2 dx + \int (1+r^2)^{\delta+1}|\nabla u|^2dx\right)\\
&\lesssim \|g_2\|^2_{L^2(\m S^1)}\|u\|^2_{H^1_{\delta}}
\end{align*}
where we have used the Sobolev embedding of $L^\infty(\m S^1)$ in the Sobolev space $W^{1,2}(\m S^1)$.
\end{proof}

We will use the following definition
\begin{df}
Let $\delta\in \m R$ and $s \in \m N$. We note $\q H^s_{\delta}$ the set of symmetric traceless 2-tensors whose components are in 
$H^s_{\delta}$.
\end{df}

\subsection{Behavior of the Laplace operator in weighted Sobolev spaces.}
\begin{thm}\label{chap2:laplacien}(Theorem 0 in \cite{laplacien})
 Let $m\in \mathbb{N}$ and $-1+m<\delta<m$. The Laplace operator $\Delta :H^2_\delta \rightarrow H^0_{\delta+2}$ is an injection with closed range 
$$\left \{f \in H^0_{\delta+2}\; | \;\int fv =0 \quad \forall v \in \cup_{i=0}^m \mathcal{H}_i \right \},$$
where $\mathcal{H}_i$ is the set of harmonic polynomials of degree $i$.
Moreover, $u$ obeys the estimate
$$\|u\|_{H^2_{\delta}} \leq C(\delta)\|\Delta u\|_{H^0_{\delta+2}},$$
where $C(\delta)$ is a constant such that $C(\delta) \rightarrow +\infty$ when $\delta \rightarrow m_{-}$ and  $\delta \rightarrow (-1+m)_{+}$.
\end{thm}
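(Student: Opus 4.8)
The plan is to conjugate $\Delta$ to a translation-invariant operator on cylinders, solve the resulting model problem by separation of variables, and patch the inverses together with a partition of unity. Near infinity the substitution $r=e^{t}$ turns $\mathbb{R}^2\setminus \bar B_1$ into the half-cylinder $[0,\infty)_t\times\mathbb{S}^1_\theta$, under which $\Delta=e^{-2t}(\partial_t^2+\partial_\theta^2)$ and, since $dx=e^{2t}\,dt\,d\theta$ and $(1+r^2)^{1/2}\sim e^{t}$, the restriction of the $H^s_\delta$-norm to this region is equivalent to a standard Sobolev norm on the cylinder with exponential weight $e^{(\delta+1)t}$; a symmetric substitution handles a punctured neighbourhood of the origin. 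On the full cylinder $\mathbb{R}_t\times\mathbb{S}^1$ the operator $\partial_t^2+\partial_\theta^2$ is translation invariant, so expanding in $e^{ik\theta}$ it becomes the family $\partial_t^2-k^2$; conjugating by the weight and taking the Fourier transform in $t$, the symbol never vanishes on the real axis precisely when the weight exponent avoids $\pm|k|$, i.e. when $\delta\notin\mathbb{Z}$. The hypothesis $m-1<\delta<m$ is exactly this non-resonance condition: the integers are the indicial roots, and at the integer $j$ the mode $|k|=j+1$ (together with, at $\delta=-1$, the double root at $k=0$ producing the solution $\ln r$) obstructs invertibility of the model operator.

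With the model inverses in hand I would build a parametrix for $\Delta$ on $\mathbb{R}^2$ by gluing a standard interior parametrix on a large annulus, the cylinder inverse near infinity, and the analogous inverse near the origin, using a partition of unity. This yields that $\Delta:H^2_\delta\to H^0_{\delta+2}$ has closed range and finite-dimensional kernel and cokernel, together with an a priori estimate of the form $\|u\|_{H^2_\delta}\lesssim\|\Delta u\|_{H^0_{\delta+2}}+\|u\|_{L^2(B_R)}$ for some fixed $R$. Injectivity is then immediate from Proposition \ref{chap2:holder}: if $\Delta u=0$ with $u\in H^2_\delta$ then $u\in C^0_\beta$ with $\beta=\delta+1>0$ (since $\delta>m-1\geq-1$), so $u$ is a bounded harmonic function on $\mathbb{R}^2$ with $u\to0$ at infinity, hence $u\equiv0$. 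A routine compactness argument then removes the $\|u\|_{L^2(B_R)}$ term and upgrades the estimate to $\|u\|_{H^2_\delta}\leq C(\delta)\|\Delta u\|_{H^0_{\delta+2}}$.

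It remains to identify the range. For $v\in\mathcal{H}_i$ with $i\leq m$ one checks that $v$ lies in the dual space $(H^0_{\delta+2})^{\ast}=H^0_{-\delta-2}$ precisely because $i<\delta+1$; since $\int(\Delta u)v=\int u\,\Delta v=0$ for $u\in C_0^\infty$, and the map $u\mapsto\int(\Delta u)v$ is continuous on $H^2_\delta$ while $C_0^\infty$ is dense, the identity $\int(\Delta u)v=0$ holds for every $u\in H^2_\delta$. Hence the range is contained in $\{f\in H^0_{\delta+2}:\int fv=0\ \text{for all}\ v\in\cup_{i=0}^m\mathcal{H}_i\}$. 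For the reverse inclusion, a cokernel element is represented by a distribution $w$ in the dual weighted space with $\Delta w=0$ weakly, hence, by elliptic regularity and the Liouville theorem on $\mathbb{R}^2$, a harmonic polynomial whose degree is forced to be $\leq m$ by the growth the dual space allows; a dimension count — the cokernel grows by $\dim\mathcal{H}_{j+1}$ each time $\delta$ crosses the integer $j\geq0$, starting from $\mathcal{H}_0$ on $(-1,0)$ — then matches the codimension exactly, so the range is all of that subspace. Finally $C(\delta)\to\infty$ as $\delta\to m_{-}$ or $\delta\to(m-1)_{+}$ because these endpoints are indicial roots: there the model operator is not invertible, so no uniform estimate can persist.

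I expect the cokernel dimension count to be the delicate point: one must track the $k=0$ mode at $\delta=-1$ (the $\ln r$ solution) carefully, control how the index of $\Delta$ jumps across each integer weight, and make the duality pairing between $H^0_{\delta+2}$ and $H^0_{-\delta-2}$ precise enough to pin down exactly which polynomial degrees are permitted. The patching of the three parametrices into a global one, while standard, also requires some care so that the compact error term really is relatively compact in the weighted topology.
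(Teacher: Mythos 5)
This statement is not proved in the paper at all: it is quoted verbatim as Theorem~0 of the reference \cite{laplacien} (McOwen's theorem on the Laplacian in weighted Sobolev spaces), so there is no internal proof to compare against. Your outline is a correct sketch of the standard argument in the Lockhart--McOwen style: conjugation to a cylinder at infinity, non-resonance of the indicial roots $\delta+1=\pm|k|$ on the open intervals $(m-1,m)$, a parametrix giving closed range and the estimate with a compact error term, injectivity via the embedding of Proposition~\ref{chap2:holder} and Liouville, and identification of the cokernel by duality. McOwen's original proof instead expands directly in circular harmonics and analyses the resulting Euler ODEs explicitly, which is the same mechanism in polar rather than cylindrical coordinates; your route buys a cleaner conceptual picture (indicial roots, translation invariance) at the cost of the patching machinery. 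Your verification that $\mathcal{H}_i\subset H^0_{-\delta-2}$ exactly when $i<\delta+1$ (hence $i\le m$), and the removal of the local term by compactness plus injectivity, are both sound.

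Two corrections. First, there is no cylindrical end at the origin: the weight $(1+|x|^2)^{(\delta+|\beta|)/2}$ is bounded above and below near $x=0$, so $H^2_\delta$ is locally the ordinary $H^2$ there and the origin is an interior point like any other. Your parametrix should be glued from a standard interior parametrix on a large ball \emph{containing} the origin and the cylinder inverse near infinity only; introducing a Mellin-type inverse at $0$ would impose spurious conditions on the local behaviour and change the space. Second, the ``delicate dimension count'' you anticipate for the cokernel is unnecessary: once the range is closed, it equals the annihilator of $\{w\in H^0_{-\delta-2}:\Delta w=0\}$, and the growth bound $\int_{B_R}|w|^2\lesssim R^{2(\delta+1)+2}$ forces such a $w$ to be a harmonic polynomial of degree at most $m$, while every harmonic polynomial of degree at most $m$ does lie in $H^0_{-\delta-2}$; no index-jump bookkeeping is needed. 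Finally, the blow-up of $C(\delta)$ at the endpoints should be witnessed explicitly (for instance by cutoffs at scale $R$ of $r^{-m}e^{im\theta}$ as $\delta\to(m-1)_+$, and by the uncontrolled degree-$(m+1)$ moment of the data as $\delta\to m_-$) rather than merely asserted from the degeneration of the model operator.
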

The following corollary has been proved in \cite{moi}.

\begin{cor}\label{chap2:regu}
 Let $s,m \in \mathbb{N}$ and $-1+m < \delta < m$. The Laplace operator $\Delta :H^{2+s}_\delta \rightarrow H^s_{\delta+2}$ is an injection with closed range 
$$\left \{f \in H^s_{\delta+2}\; | \; \int fv =0 \quad \forall v \in \cup_{i=0}^m \mathcal{H}_i \right \}.$$
Moreover, $u$ obeys the estimate
$$\|u\|_{H^{s+2}_{\delta}} \leq C(s,\delta)\|\Delta u\|_{H^s_{\delta+2}}.$$
\end{cor}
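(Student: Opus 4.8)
The plan is to argue by induction on $s$, the base case $s=0$ being exactly Theorem \ref{chap2:laplacien}; the inductive hypothesis is that the corollary holds with $s$ replaced by $s-1$, \emph{for every} admissible pair $(m,\delta)$. I fix $m\in\m N$ and $-1+m<\delta<m$ and record the elementary observation that then $m<\delta+1<m+1$, so that $\delta+1$ is admissible for the order-$(s-1)$ statement, with the cut-off degree for harmonic polynomials raised from $m$ to $m+1$. Injectivity of $\Delta:H^{2+s}_\delta\to H^s_{\delta+2}$ is immediate from the inclusion $H^{2+s}_\delta\subset H^2_\delta$ and Theorem \ref{chap2:laplacien}. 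Moreover, if $u\in H^{2+s}_\delta$ then $\Delta u\in H^s_{\delta+2}$ by two applications of Lemma \ref{chap2:der}, and the relations $\int(\Delta u)v=0$ for $v\in\cup_{i=0}^m\mathcal{H}_i$ already hold in $H^0_{\delta+2}$ by Theorem \ref{chap2:laplacien}; hence $\Delta(H^{2+s}_\delta)$ is contained in the indicated closed subspace. The substance is therefore the reverse inclusion together with the estimate, i.e. a weighted elliptic bootstrap.

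Concretely, I would take $f\in H^s_{\delta+2}$ with $\int fv=0$ for all $v\in\cup_{i=0}^m\mathcal{H}_i$, and let $u\in H^2_\delta$ be the unique solution of $\Delta u=f$ provided by Theorem \ref{chap2:laplacien}, with $\|u\|_{H^2_\delta}\le C(\delta)\|f\|_{H^0_{\delta+2}}$. Differentiating the equation, $\Delta(\partial_j u)=\partial_j f$ for $j=1,\dots,n$. By Lemma \ref{chap2:der}, $\partial_j f\in H^{s-1}_{\delta+3}$, and for any harmonic polynomial $w$ of degree at most $m+1$ an integration by parts --- legitimate after approximating $f$ in $H^s_{\delta+2}$ by elements of $C^\infty_0$, the pairings being absolutely convergent since $-1+m<\delta<m$ --- yields $\int(\partial_j f)w=-\int f\,\partial_j w=0$, because $\partial_j w$ is a harmonic polynomial of degree at most $m$. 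Thus $\partial_j f$ lies in the range furnished by the inductive hypothesis at order $s-1$ and weight $\delta+1$, so there is $w_j\in H^{s+1}_{\delta+1}$ with $\Delta w_j=\partial_j f$ and $\|w_j\|_{H^{s+1}_{\delta+1}}\le C(s-1,\delta+1)\|\partial_j f\|_{H^{s-1}_{\delta+3}}$. Since $w_j-\partial_j u\in H^1_{\delta+1}$ is harmonic, hence (being of at most polynomial growth) a harmonic polynomial, and since the weight forces the only harmonic polynomial in $H^1_{\delta+1}$ to be zero, one gets $w_j=\partial_j u$. Therefore $\partial_j u\in H^{s+1}_{\delta+1}$ for all $j$, which together with $u\in H^0_\delta$ is precisely the statement $u\in H^{2+s}_\delta$; this gives the reverse inclusion, and
$$\|u\|_{H^{s+2}_\delta}\ \lesssim\ \|u\|_{H^0_\delta}+\sum_j\|\partial_j u\|_{H^{s+1}_{\delta+1}}\ \lesssim\ \|f\|_{H^s_{\delta+2}},$$
with $C(s,\delta)$ absorbing $C(\delta)$ and $C(s-1,\delta+1)$ and inheriting their blow-up at the endpoints.

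The hard part will be the two technical points where I expect to spend care: (i) justifying the integration by parts and the convergence of the pairings of $f$ (and $\partial_j f$) against polynomially growing test functions, which I would handle via density of $C^\infty_0$ in $H^s_{\delta+2}$ and Cauchy--Schwarz, using crucially that $\delta$ lies in the \emph{open} interval $(-1+m,m)$; and (ii) the bookkeeping that each differentiation shifts the weight by $+1$ and raises the admissible polynomial degree by exactly one, so that the induction closes at every order. An alternative route, which avoids the orthogonality algebra in the regularity step, is to localise: on a dyadic annulus $\{R\le|x|\le 2R\}$ rescale $x=Ry$ to a fixed annulus, apply there the standard interior $H^{s+2}$--$H^s$ estimate for $\Delta$, scale back to a weighted estimate on that annulus, and sum over $R=2^k$; the lower-order term $\|u\|_{H^0_\delta}$ produced by the interior estimate is then controlled by the base case $s=0$. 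This yields the quantitative bound directly, and the characterisation of the range follows exactly as in the first paragraph.
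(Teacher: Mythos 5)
Your argument is correct, and it is worth noting that the paper itself gives no proof of this corollary --- it is simply quoted from \cite{moi} --- so there is nothing to match it against line by line; what you have written is a legitimate self-contained derivation from Theorem \ref{chap2:laplacien}. The key mechanism (differentiate the equation, observe that $\partial_j f$ gains one order of weight and remains orthogonal to harmonic polynomials of one degree higher because $\partial_j$ maps $\mathcal{H}_{i}$ into $\mathcal{H}_{i-1}$, then close the induction at the shifted admissible pair $(m+1,\delta+1)$) is exactly the right bookkeeping, and your verification that $m<\delta+1<m+1$ is the point that makes the induction go through. Two small points deserve the care you already flag: the absolute convergence of $\int(\partial_j f)w$ for $\deg w\le m+1$ uses precisely $\delta>m-1$ (the computation gives the exponent condition $m-\delta<1$, so the strict inequality is genuinely needed), and the identification $w_j=\partial_j u$ should be phrased via the fact that a harmonic \emph{tempered distribution} is a polynomial --- an $H^1_{\delta+1}$ function is not pointwise polynomially bounded a priori, though once it is known to be harmonic the mean value property converts the weighted $L^2$ bound into pointwise polynomial growth, so your parenthetical is easily repaired. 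The dyadic-annulus rescaling you sketch as an alternative is the other standard route (and closer to how such estimates are usually proved in the McOwen framework); it has the advantage of producing the quantitative bound without tracking orthogonality through the derivatives, at the cost of still needing the base case to control the zeroth-order term. Either version is acceptable.
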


We now prove the following two corollaries of Theorem \ref{chap2:laplacien}
which will be fundamental in our work.

\begin{cor}\label{chap2:coro1}
Let $-1<\delta<0$.
Let $f\in H^0_{\delta+3}$. Then there exists a solution $u$ of 
$$\Delta u = f,$$
which can be written uniquely in the form
$$u= \frac{1}{2\pi}\left(\int f \right)\chi(r)\ln(r)-\frac{1}{2\pi}\left(\cos(\theta)\int fx_1 +\sin(\theta)\int fx_2\right)\frac{\chi(r)}{r}+ \wht u,$$
where $\wht u \in H^2_{\delta+1}$. Moreover, we have the estimate
$$\|\tilde{u}\|_{H^2_{\delta+1}}\lesssim C(\delta)\|f\|_{H^0_{\delta+3}}.$$

\end{cor}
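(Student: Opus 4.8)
The plan is to reduce the problem to the solvable range of Theorem~\ref{chap2:laplacien} by explicitly subtracting off the two obstructions to solvability that arise at the weight we care about. Since $f \in H^0_{\delta+3}$ with $-1<\delta<0$, the source decays fast enough that $f$, $f x_1$, $f x_2$ are all integrable (one checks $H^0_{\delta+3}\subset L^1$ and that multiplication by $x_i$ lands $f$ in $H^0_{\delta+2}$, which is still $L^1$ in the relevant range), so the three numbers $\int f$, $\int f x_1$, $\int f x_2$ are well defined. The guess for the asymptotics is dictated by the fundamental solution of $\Delta$ on $\m R^2$: the monopole part contributes $\frac{1}{2\pi}(\int f)\ln r$ and the dipole part contributes the $\frac{1}{r}$ terms written in the statement.

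First I would set
\[
u_0 = \frac{1}{2\pi}\Bigl(\int f\Bigr)\chi(r)\ln r - \frac{1}{2\pi}\Bigl(\cos\theta\int f x_1 + \sin\theta \int f x_2\Bigr)\frac{\chi(r)}{r},
\]
and compute $\Delta u_0$. Away from the support of $\chi'$ (i.e.\ for $r\geq 2$) we have $\Delta(\ln r)=0$ and $\Delta(\cos\theta/r)=\Delta(x_1/r^2)=0$, $\Delta(\sin\theta/r)=0$, so $\Delta u_0$ is supported in the annulus $1\leq r\leq 2$ and is smooth and compactly supported there; in particular $\Delta u_0 \in H^0_{\delta+3}$ for any $\delta$. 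Set $g = f - \Delta u_0 \in H^0_{\delta+3}$. The key computation is to check the moment conditions from Corollary~\ref{chap2:regu} (with $m=1$, weight $\delta+1\in(-1+1,1)=(0,1)$... wait, need $-1+m<\delta+1<m$, i.e.\ $0<\delta+1<1$, which holds since $-1<\delta<0$): I must verify $\int g = 0$, $\int g\, x_1 = 0$, $\int g\, x_2 = 0$. These follow from integrating $\Delta u_0$ against $1, x_1, x_2$ and using the divergence theorem on large balls, where the boundary terms are computed precisely from the chosen monopole/dipole coefficients — this is where the factors $\frac{1}{2\pi}$ and the exact angular form $\cos\theta\int f x_1 + \sin\theta \int f x_2$ are forced. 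For instance $\int \Delta u_0 = \lim_{R\to\infty}\int_{\partial B_R}\partial_r u_0 = \frac{1}{2\pi}(\int f)\cdot 2\pi \cdot \frac1R \cdot R + (\text{dipole terms that vanish}) = \int f$, giving $\int g = 0$; and similarly the $x_i$-moments of $\Delta u_0$ reproduce $\int f x_i$ because the $1/r$ harmonics are exactly the dipole fields. I expect this boundary-term bookkeeping — getting every constant and the $\chi$-cutoff error exactly right — to be the main obstacle, though it is routine.

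Once the three moments of $g$ vanish, Theorem~\ref{chap2:laplacien} (or Corollary~\ref{chap2:regu} for the general $s$ version, here $s=0$) applied with weight $\delta'=\delta+1 \in (0,1)$, so that the target weight is $\delta'+2 = \delta+3$, produces a unique $\wht u \in H^2_{\delta+1}$ with $\Delta \wht u = g$ and $\|\wht u\|_{H^2_{\delta+1}} \leq C(\delta)\|g\|_{H^0_{\delta+3}} \lesssim C(\delta)\|f\|_{H^0_{\delta+3}}$, the last inequality because $\|\Delta u_0\|_{H^0_{\delta+3}} \lesssim |\int f| + |\int f x_1| + |\int f x_2| \lesssim \|f\|_{H^0_{\delta+3}}$ (the three moments are bounded linear functionals of $f$ on $H^0_{\delta+3}$, using Cauchy--Schwarz against the weight). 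Then $u = u_0 + \wht u$ solves $\Delta u = f$ with the claimed decomposition and estimate. For uniqueness of the decomposition: if $u_0 + \wht u = u_0 + \wht u'$ with both $\wht u,\wht u' \in H^2_{\delta+1}$ then $\wht u - \wht u'$ is harmonic and in $H^2_{\delta+1}$, hence zero by the injectivity part of Theorem~\ref{chap2:laplacien} (valid since $\delta+1<1$); note the leading terms $u_0$ are fixed by the requirement that the remainder lie in $H^2_{\delta+1}$, since $\chi(r)\ln r \notin H^2_{\delta+1}$ and $\chi(r)/r \notin H^2_{\delta+1}$ for $\delta+1>0$. This completes the argument.
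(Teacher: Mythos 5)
Your proof is correct and follows essentially the same strategy as the paper: subtract an explicit, compactly supported correction carrying the zeroth and first moments of $f$ so that the remainder satisfies the orthogonality conditions of Theorem \ref{chap2:laplacien} at weight $\delta+1$, then solve and assemble the decomposition (whose uniqueness follows, as you note, because $\chi(r)\ln r$ and $\chi(r)/r$ do not lie in $H^2_{\delta+1}$). The only cosmetic difference is that you take the correction to be $\Delta$ applied to the truncated asymptotic profile and verify the moment identities $\int \Delta u_0 = \int f$, $\int x_i\,\Delta u_0 = \int x_i f$ by the divergence theorem, whereas the paper subtracts radial bump sources $F$, $G_1$, $G_2$ with normalized moments and uses the expansions of their Newtonian potentials; your constants and the $C(\delta)$ blow-up as $\delta\to -1$ both check out.
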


\begin{proof}
Let $F$ be a radial function, smooth, compactly supported, such that $\int F=2\pi$, and $G$ a radial function, smooth, compactly supported, which is $0$ in a neighborhood of $0$ and such that $\int Gr = 4\pi$. We note 
$$G_1(x)=G(r)\cos(\theta)\;  and \; G_2(x)=G(r) \sin(\theta).$$
Let 
$$u_0(x)=\frac{1}{2\pi}\int F(y)\ln(|x-y|)dy$$
be a solution of $\Delta u_0=F$, and 
$$u_i(x)=\frac{1}{2\pi}\int G_i(y)\ln(|x-y|)dy$$
be a solution of $\Delta u_i = G_i$. We may calculate
\begin{align*}
u_0&= \chi(r)\ln(r) + \wht u_0,\\
u_1&=-\chi(r)\frac{\cos(\theta)}{r} +\wht u_1,\\
u_2&=-\chi(r)\frac{\sin(\theta)}{r} +\wht u_2,
\end{align*}
where $\wht u_0,\wht u_i \in H^2_{\delta+1}$.

Thanks to Theorem \eqref{chap2:laplacien}, we can solve the equation
$$\Delta v= f-\frac{1}{2\pi}\left(\int f\right)F
-\frac{1}{2\pi}\left(\int f x_1 \right)G_1-\frac{1}{2\pi}\left(\int f x_2\right) G_2$$
since the right-hand side is orthogonal to the polynomials of degree $0$ and $1$, and we have $v\in H^2_{\delta+1}$, which satisfies
$$\|v\|_{H^2_{\delta+1}}\lesssim \|f\|_{H^0_{\delta+3}}+\int |f|+\int r|f|
\lesssim \|f\|_{H^0_{\delta+3}}+\int |f|\frac{(1+r^2)^{\frac{\delta}{2}+\frac{3}{2}}}{(1+r^2)^{\frac{\delta}{2}+1}}
\lesssim \frac{1}{\sqrt{\delta+1}}\|f\|_{H^0_{\delta+3}}
.$$
Therefore we can solve the equation $\Delta u=f$, and $u$ can be written
\begin{align*}
u&=v+\frac{1}{2\pi}\left(\int f\right)+\frac{1}{2\pi}\left(\int fx_1\right)u_1+\frac{1}{2\pi}\left(\int fx_2\right)u_2\\
&=\frac{1}{2\pi}\left(\int f \right)\chi(r)\ln(r)-\frac{1}{2\pi}\left(\cos(\theta)\int fx_1 +\sin(\theta)\int fx_2\right)\frac{\chi(r)}{r}+ \wht u,
\end{align*}
where $\wht u \in H^2_{\delta+1}$ with
$$\|\tilde{u}\|_{H^2_{\delta+1}}\lesssim \|f\|_{H^0_{\delta+3}}.$$
This concludes the proof of Corollary \ref{chap2:coro1}.
\end{proof}

We introduce the notation.
$$M_\theta = \left(\begin{array}{ll}\cos(2\theta)&\sin(2\theta)\\\sin(2\theta)&-\cos(2\theta)\end{array}\right), \quad N_\theta=\left(\begin{array}{ll}-\sin(2\theta)&\cos(2\theta)\\\cos(2\theta)&\sin(2\theta)\end{array}\right).$$

\begin{cor}\label{chap2:coro2}
Let $-1<\delta<0$. Let $f_j \in H^0_{\delta + 3}$ with $\int f_j = 0$, $j=1,2$. Then, there exists a symmetric and traceless 2-tensor $K$ solution of
$$\partial_i K_{ij}=f_j,$$
which can be written uniquely in the form
$$K=A\frac{\chi(r)}{r^2}M_\theta
+J\frac{\chi(r)}{r^2}N_\theta +\wht K,$$
with $\wht K \in \q H^1_{\delta + 2}$ and
$$
A=\frac{1}{2\pi}\int x_1f_1+x_2f_2, \quad J=\frac{1}{2\pi}\int x_1f_2-x_2f_1,\quad \|\wht K\|_{\q H^1_{\delta+2}}\lesssim \|f_1\|_{H^0_{\delta+3}}+\|f_1\|_{H^0_{\delta+3}}.
$$
\end{cor}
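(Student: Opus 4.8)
The plan is to reduce the first-order system $\partial_i K_{ij}=f_j$ for a symmetric traceless $2$-tensor to a pair of scalar Poisson equations, to which Corollary \ref{chap2:coro1} applies, and then to bookkeep the resulting asymptotic terms. A symmetric traceless $2$-tensor on $\m R^2$ has two independent components, say $K_{11}=-K_{22}=:a$ and $K_{12}=K_{21}=:b$. The divergence equations become
\begin{align*}
\partial_1 a + \partial_2 b &= f_1,\\
\partial_1 b - \partial_2 a &= f_2.
\end{align*}
This is exactly a Cauchy--Riemann-type system with source: if we set $w=a-ib$ (or work with the potential directly), then solving it is equivalent to finding a scalar potential $\phi$ with $a=\partial_1\partial_1\phi-\partial_2\partial_2\phi$-type expressions. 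Concretely, first I would look for $K$ of the form $K_{ij}=2\partial_i\partial_j\phi-\delta_{ij}\Delta\phi$ (the traceless Hessian of a potential $\phi$), which is automatically symmetric and traceless; then $\partial_i K_{ij}=2\partial_j\Delta\phi-\partial_j\Delta\phi=\partial_j\Delta\phi$. Hence setting $g:=\Delta\phi$, the system $\partial_i K_{ij}=f_j$ is solved as soon as $\partial_j g = f_j$, i.e. $g$ is a potential for the (closed, since we will arrange the compatibility) $1$-form $f_j\,dx^j$. But $f_j\,dx^j$ need not be closed, so this single-potential ansatz is too rigid: I would instead use \emph{two} potentials, writing $K_{ij}=(2\partial_i\partial_j-\delta_{ij}\Delta)\phi + (\text{rotated version})\psi$, where the rotated Hessian built from $\psi$ contributes the curl part. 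The two scalar equations one lands on are $\Delta\phi$ and $\Delta\psi$ being determined (after one integration) by $f_1,f_2$; more cleanly, one applies $\partial_j$ to the system and takes suitable combinations to get $\Delta a = \partial_1 f_1-\partial_2 f_2$ and $\Delta b=\partial_2 f_1+\partial_1 f_2$ directly, solving these two Poisson equations for $a$ and $b$ and then checking \emph{a posteriori} that the first-order system holds (this last check uses $\int f_j=0$).

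Here is the order I would carry it out. (1) Reduce to Poisson equations: take the two linear combinations of $\partial_k$ applied to the system above to obtain $\Delta a = \partial_1 f_1 - \partial_2 f_2 \in H^0_{\delta+4}$ and $\Delta b = \partial_1 f_2 + \partial_2 f_1 \in H^0_{\delta+4}$ by Lemma \ref{chap2:der}. (2) Apply Corollary \ref{chap2:coro1} — actually a version for right-hand sides in $H^0_{\delta+4}$, which one gets by the same argument, or one integrates by parts first to see the source is a derivative — to solve for $a$ and $b$; the point is that the source is a pure divergence, so the degree-$0$ orthogonality condition $\int(\partial_1 f_1-\partial_2 f_2)=0$ is automatic, and the degree-$1$ conditions $\int x_k(\partial_1 f_1-\partial_2 f_2)=0$ reduce after integration by parts to combinations of $\int f_j=0$, which hold by hypothesis. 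Thus $a$ and $b$ decay better — one expects $a,b\in H^2_{\delta+2}$ with possibly an $r^{-1}$-type leading term governed by $\int x_k f_j$. (3) Verify the first-order system: set $F_j := \partial_i K_{ij}-f_j$; by construction $\partial_j F_j$-type combinations vanish, so $F_1,F_2$ satisfy $\Delta F_j = 0$ in the appropriate weighted space and decay, hence $F_j\equiv 0$ by the injectivity in Theorem \ref{chap2:laplacien}. (4) Extract the asymptotics: track how the explicit $\chi(r)\ln r$ and $\chi(r)/r$ terms from Corollary \ref{chap2:coro1} for $a$ and $b$ translate, under the differentiations in step (1) being undone, into the stated $A\,\chi(r)r^{-2}M_\theta + J\,\chi(r)r^{-2}N_\theta$ form; compute $A$ and $J$ by matching coefficients, using that $M_\theta$ and $N_\theta$ are precisely the angular profiles of $\partial_i\partial_j(\chi(r)\ln r)$ and its rotation, whose leading behavior is $r^{-2}$ times a degree-two trigonometric polynomial. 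The constants $A=\frac{1}{2\pi}\int(x_1 f_1+x_2 f_2)$ and $J=\frac{1}{2\pi}\int(x_1 f_2 - x_2 f_1)$ emerge exactly as the degree-$1$ moments that Corollary \ref{chap2:coro1} would otherwise obstruct. (5) Uniqueness of the decomposition: if two such decompositions agreed, their difference would be a tensor in $\q H^1_{\delta+2}$ that is also a combination of the explicit profiles; since $\delta+2>1$, those profiles are not in $H^1_{\delta+2}$ unless $A=J=0$, forcing $\wht K=0$.

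The main obstacle I anticipate is bookkeeping the weights and the explicit asymptotic profiles consistently. The differentiation trick in step (1) costs one unit of weight each time ($f_j\in H^0_{\delta+3}\Rightarrow \partial_k f_j\in H^0_{\delta+4}$), so one is applying the Laplacian inverse to a source in $H^0_{\delta+4}$ rather than $H^0_{\delta+3}$; one must check this still lands in the range of the relevant mapping theorem with the right decay — this is where the hypothesis $\int f_j=0$ (equivalently, killing the degree-$1$ moments of the derivative source) is essential, and it is the crux of why the leading term is $r^{-2}$ and not $\ln r$ or $r^{-1}$. The second delicate point is purely computational: identifying that the second derivatives of $\chi(r)\ln r$ produce exactly the matrix $M_\theta$ (up to lower-order and the trace part) and that the rotated potential produces $N_\theta$, so that the coefficients $A,J$ come out with the clean integral formulas claimed. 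Everything else — regularity, the product and embedding lemmas, the final estimate $\|\wht K\|_{\q H^1_{\delta+2}}\lesssim \|f_1\|_{H^0_{\delta+3}}+\|f_2\|_{H^0_{\delta+3}}$ — is routine given Corollary \ref{chap2:coro1} and Corollary \ref{chap2:regu}.
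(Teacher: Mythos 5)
Your strategy is workable but genuinely different from the paper's, and it is worth seeing why the paper's route is shorter. The paper takes the conformal Killing ansatz $K_{ij}=\partial_i Y_j+\partial_j Y_i-\delta_{ij}\partial^kY_k$, for which (in two dimensions) $\partial_i K_{ij}=\Delta Y_j$ exactly; so one solves $\Delta Y_j=f_j$ with $f_j\in H^0_{\delta+3}$ by Corollary \ref{chap2:coro1} \emph{as stated}, the hypothesis $\int f_j=0$ kills the $\ln r$ term, and the $\chi(r)r^{-2}M_\theta$, $\chi(r)r^{-2}N_\theta$ profiles with $A=-(a_1+b_2)$, $J=b_1-a_2$ fall out by differentiating the $\chi(r)r^{-1}(\cos\theta,\sin\theta)$ profiles of $Y$. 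Your route instead differentiates the first-order system to get $\Delta a=\partial_1f_1-\partial_2f_2$, $\Delta b=\partial_2f_1+\partial_1f_2$ with sources only in $H^0_{\delta+4}$; this shifts the weight by one, so the mapping theorem you need is the one with $\delta+2\in(1,2)$, i.e.\ $m=2$, and the obstruction space now includes the \emph{degree-two} harmonic polynomials. Your step (2) correctly disposes of the degree-$0$ and degree-$1$ conditions (automatic, resp.\ equivalent to $\int f_j=0$ after integration by parts), but the degree-$2$ conditions $\int(\partial_1f_1-\partial_2f_2)(x_1^2-x_2^2)$ etc.\ do \emph{not} vanish — after integration by parts they are precisely $-4\pi A$ and the analogous expression for $J$ — and it is their failure that produces the $r^{-2}(\cos 2\theta,\sin 2\theta)$ leading terms. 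So the "version of Corollary \ref{chap2:coro1} for right-hand sides in $H^0_{\delta+4}$" that you invoke must be proved with one more order of asymptotic expansion (explicit profile functions dual to $\mathcal H_2$), which is a nontrivial if routine addendum not present in the paper; your phrase "the degree-$1$ moments that Corollary \ref{chap2:coro1} would otherwise obstruct" conflates the two bookkeepings and should be degree-$2$ in your normalization. Your a posteriori check (step (3)) and the uniqueness argument (step (5)) are correct: $F_j$ is a harmonic tempered distribution lying in a positively weighted $L^2$ space, hence zero, and the profiles $\chi(r)r^{-2}M_\theta$, $\chi(r)r^{-2}N_\theta$ indeed fail to lie in $\q H^1_{\delta+2}$ for $\delta+2>1$. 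In short: both approaches yield the same $A$ and $J$, but the vector-potential reduction lets the paper reuse Corollary \ref{chap2:coro1} verbatim, whereas yours requires building its second-order analogue first.
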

\begin{proof}
We can look for $K$ of the form
$$K_{ij}=\partial_i Y_j+\partial_j Y_i -\delta_{ij}\partial^k Y_k,$$
then $Y_j$ satisfies
$$\Delta Y_j = f_j.$$
We apply Corollary \eqref{chap2:coro1} which allows us to find a solution in the form\footnote{Recall that $\int f_j=0$.}
$$Y_j=\frac{\chi(r)}{r}\left(a_j\cos(\theta)+b_j \sin(\theta)\right)+\wht Y_j,$$
with $\wht Y_j \in H^2_{\delta+1}$ and
$$a_j = -\frac{1}{2\pi}\int x_1 f_j, \quad b_j=-\frac{1}{2\pi}\int x_2 f_j, \quad \|\wht Y_j\|_{H^2_{\delta+1}}\lesssim \|f_j\|_{H^0_{\delta+3}}.$$
We calculate
\begin{align*}
K_{11}=&\partial_1 Y_1-\partial_2 Y_2\\
=&\frac{\chi(r)}{r^2}\Big(a_1(-\cos^2(\theta)+\sin^2(\theta))
-2b_1\cos(\theta)\sin(\theta)+2a_2\cos(\theta)\sin(\theta)\\
&+b_2(\sin^2(\theta)-\cos^2(\theta))\Big)+\wht K_{11}\\
=&\frac{\chi(r)}{r^2}\left(-(a_1+b_2)\cos(2\theta)+(a_2-b_1)\sin(2\theta)\right) +\wht K_{11},\\
K_{12}=&\partial_1Y_2+\partial_2Y_1\\
=&\frac{\chi(r)}{r^2}\Big(a_2(-\cos^2(\theta)+\sin^2(\theta))-2b_2\cos(\theta)\sin(\theta)-2a_1\cos(\theta)\sin(\theta)\\
&+b_1(-\sin^2(\theta)+\cos^2(\theta))\Big)+\wht K_{12}\\
=&\frac{\chi(r)}{r^2}\left(-(a_2-b_1)\cos(2\theta)-(a_1+b_2)\sin(2\theta)\right)+\wht K_{12},
\end{align*}
with $\wht K_{11}$ and $\wht K_{12}$ in $H^1_{\delta+2}$ and
$$\| \wht K_{11}\|_{H^1_{\delta+2}}+\| \wht K_{12}\|_{H^1_{\delta+2}}\lesssim \|f_1\|_{H^0_{\delta+3}}+\|f_2\|_{H^0_{\delta+3}}.$$
Therefore we can write
$$K=A\frac{\chi(r)}{r^2}\left(\begin{array}{ll}\cos(2\theta)&\sin(2\theta)\\\sin(2\theta)&-\cos(2\theta)\end{array}\right)
+J\frac{\chi(r)}{r^2}\left(\begin{array}{ll}-\sin(2\theta)&\cos(2\theta)\\\cos(2\theta)&\sin(2\theta)\end{array}\right) +\wht K,$$
with
\begin{align*}
A&=-(a_1+b_2)=\frac{1}{2\pi}\int x_1f_1+x_2f_2,\\
J&=b_1-a_2=\frac{1}{2\pi}\int x_1f_2-x_2f_1,\\
\|\wht K\|_{\q H^1_{\delta+2}}&\lesssim \|f_1\|_{H^0_{\delta+2}}+\|f_2\|_{H^0_{\delta+2}}.
\end{align*}
This concludes the proof of Corollary \ref{chap2:coro2}.
\end{proof}

\section{Main result and outline of the proof}
In \cite{moi}, we solved the system \eqref{chap2:sys} for $\dot{u}^2, |\nabla u|^2 \in H^0_{\delta+2}$ with $-1<\delta<0$. The solutions we found were of the form
\begin{align*}
\lambda&= -\alpha\chi(r)\ln(r)+\wht{\lambda},\\
H&= -(\rho\cos(\theta-\eta)+\wht{b}(\theta))\frac{\chi(r)}{2r}M_\theta +\wht{H},\\
\tau&=(\rho\cos(\theta-\eta)+\wht{b}(\theta))\frac{\chi(r)}{r}+\wht{\tau},
\end{align*}
where $\wht{\lambda}\in H^2_{\delta}$, $\wht H \in \q H^1_{\delta+1}$. By looking for $H$ as $H_{ij}=\partial_i Y_j+\partial_j Y_i -\delta_{ij}\partial^kY_k$, 
the system  \eqref{chap2:sys} corresponds to three Laplace-like equations.
The quantities $\wht b \in W^{1,2}(\m S^1)$ and $\wht \tau \in H^1_{\delta+1}$ are free parameters, while
the three parameters $\alpha$, $\rho$ and $\eta$ are determined by the three corresponding orthogonality conditions, namely that the integrals of the right-hand sides of \eqref{chap2:sys} vanish.

In this paper, assuming that $\dot{u}^2, |\nabla u|^2 \in H^0_{\delta+3}$ (i.e. assuming more decay on $u$ and $\dot{u}$ than in \cite{moi}), we want to go further in the asymptotic expansion of our solution.
This will require to enforce additional orthogonality conditions.

\subsection{Main result}

\begin{thm}\label{chap2:main}
Let $-1<\delta<0$. Let $\dot{u}^2, |\nabla u|^2 \in H^0_{\delta+3}$ and $\wht b\in W^{1,2}(\m S^1)$ such that
\begin{equation}
\label{ortob}\int_{\m S^1} \wht b(\theta)\cos(\theta)d\theta=\int_{\m S^1} \wht b(\theta)\sin(\theta)d\theta =0.
\end{equation}
We note
$$\varepsilon = \int \dot{u}^2 +|\nabla u|^2.$$
We assume 
$$\|\dot{u}^2\| _{H^0_{\delta+3}} + \||\nabla u|^2\|_{H^0_{\delta+3}} +\|\wht b\|_{W^{1,2}}\lesssim \varepsilon.$$
Let $B\in W^{1,2}(\m S^1)$. We assume
$$\| B\|_{W^{1,2}}\lesssim \varepsilon^2.$$
Let $\Psi \in H^1_{\delta+2}$ be such that $\int \Psi = 2\pi$.
If $\ep>0$ is small enough, there exist $\alpha,\rho,\eta,A,J,c_1,c_2$ in $\m R$, a scalar functions $\wht \lambda \in H^2_{\delta+1}$
and a symmetric traceless tensor $\wht H \in \q H^1_{\delta+2}$ such that, if
$r,\theta$ are the polar coordinates centered in $(c_1,c_2)$, and if we note
\begin{align*}
\lambda&= -\alpha \chi(r)\ln(r)+\wht \lambda,\\
H&=-(\wht b(\theta)+\rho\cos(\theta-\eta))\frac{\chi(r)}{2r}M_\theta + e^{-\lambda}\frac{\chi(r)}{r^2}\left((J-(1-\alpha)B(\theta))N_\theta -\frac{B'(\theta)}{2}M_\theta \right) + \wht H,
\end{align*}
then $\lambda, H$ are solutions of \eqref{chap2:sys} with
$$\tau=(\wht b(\theta)+\rho \cos(\theta-\eta))\frac{\chi(r)}{r}+e^{-\lambda} B'(\theta)\frac{\chi(r)}{r^2}+A\Psi.$$
Moreover we have the estimates
\begin{align*}
\alpha&=\frac{1}{4\pi}\int \left(\dot{u}^2+|\nabla u|^2\right) +O(\ep^2),\\
\rho \cos(\eta)&=\frac{1}{\pi}\int \dot{u}.\partial_1 u +O(\ep^2),\\
\rho \sin(\eta)&=\frac{1}{\pi}\int \dot{u}.\partial_2 u +O(\ep^2),\\
c_1&=\frac{1}{4\pi}\int x_1\left(\dot{u}^2+|\nabla u|^2\right) +O(\ep^2),\\
c_2&=\frac{1}{4\pi}\int x_2\left(\dot{u}^2+|\nabla u|^2\right) +O(\ep^2),\\
J&=-\frac{1}{2\pi}\int \dot{u}.\partial_\theta u+\frac{\rho }{2\alpha}(c_1\sin(\eta)-c_2\cos(\eta))+O(\ep^2),\\
A&=-\frac{1}{2\pi}\int r\dot{u}.\partial_r u
+\frac{1}{2\pi}\left(\int \chi'(r)rdr\right)\int \wht b(\theta)d\theta + O(\ep^2),
\end{align*}
and
$$\|\wht \lambda\|_{H^2_{\delta+1}}+\|\wht \tau\|_{H^1_{\delta+2}}+\|\wht H\|_{\q H^1_{\delta+2}}\lesssim \ep.$$
\end{thm}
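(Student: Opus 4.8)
The plan is to set up a fixed-point argument in the spirit of \cite{moi}, but now carrying the more refined ansatz. First I would substitute the prescribed forms of $\lambda$, $H$, and $\tau$ into the system \eqref{chap2:sys}. Writing $H_{ij}=\partial_i Y_j+\partial_j Y_i-\delta_{ij}\partial^k Y_k$ reduces the momentum constraint to two Poisson equations $\Delta Y_j=f_j$, and the Hamiltonian constraint is a third Poisson equation $\Delta\lambda=-\tfrac12\dot u^2-\tfrac12|\nabla u|^2-\tfrac12|H|^2+\tfrac14\tau^2$. The right-hand sides belong to $H^0_{\delta+3}$ by Proposition~\ref{chap2:produit} and Lemma~\ref{chap2:produit3}, using that $\dot u^2,|\nabla u|^2\in H^0_{\delta+3}$ and that the explicit leading terms in $H$ and $\tau$ decay like $r^{-1}$ (angular part only) or $r^{-2}$; in particular the cross terms and the $B$-dependent terms are small of order $\ep^2$. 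The idea is then to apply Corollary~\ref{chap2:coro1} to each of the three equations: each solution acquires a $\chi(r)\ln r$ term and a $\chi(r)/r$ term with coefficients given by the integral and first moments of the source, plus a remainder in $H^2_{\delta+1}$.

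Next I would match these expansions against the prescribed ansatz. The $\ln r$ coefficient of the $\lambda$-equation forces the definition of $\alpha$ (giving the stated formula $\alpha=\frac1{4\pi}\int(\dot u^2+|\nabla u|^2)+O(\ep^2)$); the requirement that the $\chi(r)/r$ term of $\lambda$ vanish is precisely the choice of the center $(c_1,c_2)$, since translating the origin absorbs a $\cos\theta/r$, $\sin\theta/r$ contribution — this is why the first moments of $\dot u^2+|\nabla u|^2$ appear in $c_1,c_2$. For the $Y_j$ equations, Corollary~\ref{chap2:coro2} converts the leading behavior of $H$ into the $M_\theta$ and $N_\theta$ terms; the parameters $\rho,\eta$ come from the first moments $\int\dot u.\partial_ju$ (the orthogonality conditions of \cite{moi}), while the new parameters $A$ and $J$, together with the angular profile $B(\theta)$, are fixed by the next order: one must enforce that, after removing the explicit $r^{-1}$ and $r^{-2}$ terms, the source of each Poisson equation is orthogonal to harmonic polynomials of degree $0$ and $1$, so that the remainder genuinely lies in $H^2_{\delta+1}$ resp. $\q H^1_{\delta+2}$. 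The condition $\int\Psi=2\pi$ makes $A\Psi$ a legitimate substitute for the $\ln$-type term in $\tau$, and the orthogonality conditions \eqref{ortob} on $\wht b$ are exactly what is needed to keep the $\wht b$-contribution to these moments from interfering with the determination of $c_1,c_2$.

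Having identified all the parameters as functions of the unknown remainders $(\wht\lambda,\wht H,\wht\tau)$ and the data, I would set up the contraction. Define a map $\Phi$ sending a tuple $(\wht\lambda,\wht H,\wht\tau,\alpha,\rho,\eta,A,J,c_1,c_2)$ in a small ball of $H^2_{\delta+1}\times\q H^1_{\delta+2}\times H^1_{\delta+2}\times\m R^7$ to the new tuple obtained by: (i) forming the sources $f_j$ and the Hamiltonian source using the current iterate; (ii) reading off the new values of the seven real parameters from the integral/moment formulas above; (iii) solving the three Poisson equations via Corollaries~\ref{chap2:coro1} and~\ref{chap2:coro2} to get the new remainders, with the quantitative bounds $\|\wht\lambda\|_{H^2_{\delta+1}}\lesssim\|f\|_{H^0_{\delta+3}}$ etc. The estimates of Corollary~\ref{chap2:coro1} (with the $C(\delta)$ constant) together with the product rules show $\Phi$ maps a ball of radius $\lesssim\ep$ into itself; the same estimates applied to differences, using that all nonlinear terms are at least quadratic or couple a bounded factor to a small one, give contraction for $\ep$ small. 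The Banach fixed point theorem then yields the solution, and the asymptotic formulas for $\alpha,\rho\cos\eta,\rho\sin\eta,c_1,c_2,J,A$ follow by evaluating the moment formulas at the fixed point and discarding the $O(\ep^2)$ corrections coming from the $|H|^2$, $\tau^2$, $\dot u.\partial u$ cross terms and from the difference between $\chi(r)\ln r$ and the true solution.

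The main obstacle I anticipate is the bookkeeping at the new order: one must check that the ansatz for $H$ and $\tau$ — with the specific combination $(J-(1-\alpha)B(\theta))N_\theta-\tfrac12 B'(\theta)M_\theta$ and the $e^{-\lambda}$ weights — is exactly consistent, i.e. that when $H_{ij}=\partial_iY_j+\partial_jY_i-\delta_{ij}\partial^kY_k$ is expanded the subleading $r^{-2}$ terms reproduce precisely these tensors and no others, and that the $e^{-\lambda}=r^{\alpha}(1+\ldots)$ factors are compatible with the weighted spaces (here $\alpha=O(\ep)$ is small, so $r^{\alpha}$ is a harmless slowly-varying weight, but it must be tracked in every product estimate). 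A secondary delicate point is that the constant $C(\delta)$ in Theorem~\ref{chap2:laplacien} blows up as $\delta\to 0_-$ or $\delta\to -1_+$, so one must keep $\delta$ fixed in the open interval and make sure the smallness threshold on $\ep$ is allowed to depend on $\delta$; the factor $\frac1{\sqrt{\delta+1}}$ appearing in the proof of Corollary~\ref{chap2:coro1} is exactly the kind of thing that has to be absorbed into the universal constants. Everything else is a routine, if lengthy, verification.
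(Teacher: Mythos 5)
Your overall architecture --- the decomposition $H_{ij}=\partial_iY_j+\partial_jY_i-\delta_{ij}\partial^kY_k$, Corollaries \ref{chap2:coro1} and \ref{chap2:coro2}, moment/orthogonality conditions fixing $\alpha,\rho,\eta,A,J$ and the center of mass, and a contraction for small $\ep$ --- is the same as the paper's; the paper merely packages the iteration more economically (the fixed point runs only on $(\alpha,c_1,c_2,\wht\lambda)$, the momentum constraint being solved exactly at each step, with fixed coordinates throughout and a single translation to the center of mass at the very end, the dipole term of $\lambda$ being compensated in the meantime by the $\frac{r_c}{\alpha}$ corrections in $H^{(2)}$ and $\tau^{(2)}$).

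The genuine gap is your justification that the right-hand sides of the three Poisson equations lie in $H^0_{\delta+3}$. You attribute this to Proposition \ref{chap2:produit} and Lemma \ref{chap2:produit3} together with the observation that the explicit terms of $H$ and $\tau$ decay like $r^{-1}$ or $r^{-2}$. In two dimensions with $-1<\delta<0$ a function decaying like $r^{-2}$ is \emph{not} in $H^0_{\delta+3}$ (one needs $\int^\infty r^{2\delta+3}dr<\infty$, i.e. $\delta<-2$), and this applies to $\partial_j\bigl(b(\theta)\tfrac{\chi(r)}{r}\bigr)$, to $\tau\,\partial_j\lambda\sim \alpha b(\theta)r^{-2}$, and to the $b(\theta)^2r^{-2}$ parts of $|H|^2$ and $\tau^2$; no product lemma rescues these. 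Membership in $H^0_{\delta+3}$ holds only because of exact algebraic cancellations built into the ansatz: $\frac12\partial_j\tau^{(2)}-\partial_iH^{(2)}_{ij}$ collapses to compactly supported $\chi'$ terms (Proposition \ref{chap2:prph1}); the $r^{-2}$ and $r^{-3}$ bulk of $\frac12\tau^{(2)}\partial_j\lambda$ and of $H^{(2)}_{ij}\partial_i\lambda$ cancel against each other (Proposition \ref{chap2:prph2}), and similarly for the $\tau^{(3)},H^{(3)}$ pair; and $\frac12|H|^2-\frac14\tau^2$ loses all its $r^{-2},r^{-3},r^{-4}$ bulk because ${M_\theta}^{ij}{M_\theta}_{ij}=2$ and ${M_\theta}^{ij}{N_\theta}_{ij}=0$, which is exactly why $H$ carries the coefficient $-\tfrac{1}{2}M_\theta$ against the corresponding term of $\tau$. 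These cancellations dictate the precise coefficients of the ansatz and are the step on which the whole scheme stands or falls; they are not routine bookkeeping downstream of the functional analysis. Relatedly, you treat $e^{-\lambda}$ as a harmless slowly varying weight, whereas its role is structural: writing the unknown part of $H$ as $e^{-\lambda}H^{(1)}$ gives $\partial_i(e^{-\lambda}H^{(1)})_{ij}+e^{-\lambda}H^{(1)}_{ij}\partial_i\lambda=e^{-\lambda}\partial_iH^{(1)}_{ij}$, which is what turns the perturbed divergence equation into a pure divergence equation to which Corollary \ref{chap2:coro2} applies; without this conjugation the equation for $Y_j$ is not $\Delta Y_j=f_j$ with a source you control.
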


\begin{rk}
There is a natural rapprochement between the quantities
$\alpha, \rho, \eta, c_1,c_2, J,A$ and the global charges in $3+1$ dimensions (such as the ADM mass, ADM momentum...). See for example \cite{cor} for a definition.
\end{rk}

The following corollary is a straightforward consequence of Theorem \eqref{chap2:main} and Corollary \eqref{chap2:regu}.

\begin{cor}\label{chap2:regu2}Let $\delta,\dot{u},\nabla u, \ep, \wht b, B $ and $\Psi$ be as in the assumptions of Theorem \ref{chap2:main}. Moreover
let $s\in \mathbb{N}$ and assume $\dot{u}^2, |\nabla u|^2 \in H^s_{\delta+3}$, $B,\wht b \in W^{s+1,2}(\m S^1)$
and $\Psi \in H^{s+1}_{\delta+2}$. 
Then the conclusion of Theorem \eqref{chap2:main} holds and we have furthermore $\widetilde{\lambda} \in H^{s+2}_{\delta+1}$,
 $\widetilde{H} \in \q H^{s+1}_{\delta+2}$, with the estimates 
$$\|\widetilde{\lambda}\|_{H^{s+2}_{\delta+1}}+\|\widetilde{H}\|_{\q H^{s+1}_{\delta+2}} \lesssim 
\|\dot{u}^2\|_{H^s_{\delta+3}}+\||\nabla u|^2\|_{H^s_{\delta+3}} +\|\wht b\|_{W^{s+1,2}}+\|B\|_{W^{s+1,2}}.$$
\end{cor}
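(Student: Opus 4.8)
The plan is to obtain the solution from Theorem~\ref{chap2:main} and then bootstrap the regularity of the remainders $\wht\lambda$ and $\wht H$ one derivative at a time, using the elliptic estimate of Corollary~\ref{chap2:regu}. Since the hypotheses here are strictly stronger than those of Theorem~\ref{chap2:main} (as $H^s_{\delta+3}\subset H^0_{\delta+3}$, $W^{s+1,2}(\m S^1)\subset W^{1,2}(\m S^1)$ and $H^{s+1}_{\delta+2}\subset H^1_{\delta+2}$), that theorem applies verbatim and produces a solution of \eqref{chap2:sys} of the announced form with $\wht\lambda\in H^2_{\delta+1}$ and $\wht H\in\q H^1_{\delta+2}$; only the improved regularity remains to be shown. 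We work in the polar coordinates centered at $(c_1,c_2)$, which only changes the weights by a fixed translation and hence does not affect the weighted Sobolev norms up to equivalence.

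First I would write down the Poisson equations hidden in \eqref{chap2:sys}. Substituting $\lambda=-\alpha\chi(r)\ln r+\wht\lambda$ into the second equation of \eqref{chap2:sys} gives
$$\Delta\wht\lambda=\alpha\,\Delta\big(\chi(r)\ln r\big)-\tfrac12\dot u^2-\tfrac12|\nabla u|^2-\tfrac12|H|^2+\tfrac14\tau^2=:F_\lambda .$$
Writing $H_{ij}=\partial_iY_j+\partial_jY_i-\delta_{ij}\partial^kY_k$ as in the proof of Theorem~\ref{chap2:main}, the first equation of \eqref{chap2:sys} becomes $\Delta Y_j=-\dot u\cdot\partial_ju+\tfrac12\partial_j\tau-\tfrac12\tau\partial_j\lambda-H_{ij}\partial_i\lambda$; subtracting the explicit potential whose symmetrised traceless gradient is the explicit part of $H$ in the statement, the remaining potential $\wht Y_j$ of $\wht H$ (which lies in $H^2_{\delta+1}$) solves $\Delta\wht Y_j=F_j$, where $F_j$ collects the datum $\dot u\cdot\partial u$, the contributions of $\wht b,B,\Psi$ and a bounded number of their tangential derivatives, the explicitly computable Laplacians of the ansatz terms, and terms that are at least of first order in $(\wht\lambda,\wht H)$.

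Then I would run the induction on the regularity index. Fix $k$ with $2\le k\le s+1$ and assume $\wht\lambda\in H^k_{\delta+1}$ and $\wht Y_j\in H^k_{\delta+1}$, equivalently $\wht H\in\q H^{k-1}_{\delta+2}$; this is the content of Theorem~\ref{chap2:main} for $k=2$. The key claim is that then $F_\lambda,F_j\in H^{k-1}_{\delta+3}$, with norms bounded by $\|\dot u^2\|_{H^{k-1}_{\delta+3}}+\||\nabla u|^2\|_{H^{k-1}_{\delta+3}}+\|\wht b\|_{W^{k,2}}+\|B\|_{W^{k,2}}$ plus $O(\ep)$ times the $H^k_{\delta+1}$--norm of $(\wht\lambda,\wht H)$. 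Granting this, Corollary~\ref{chap2:regu} applied with weight $\delta+1\in(0,1)$, hence $m=1$, provides solutions of $\Delta u=F_\lambda$ and $\Delta u=F_j$ in $H^{k+1}_{\delta+1}$; the solvability (orthogonality against $\q H_0\cup\q H_1$) is automatic because $F_\lambda$ and $F_j$ are by construction Laplacians of functions already in $H^2_{\delta+1}$ (closed-range characterisation in Theorem~\ref{chap2:laplacien}), and by injectivity of $\Delta$ on $H^2_{\delta+1}$ these new solutions coincide with $\wht\lambda$, resp.\ $\wht Y_j$. Hence $\wht\lambda\in H^{k+1}_{\delta+1}$ and, by Lemma~\ref{chap2:der}, $\wht H\in\q H^k_{\delta+2}$. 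Iterating from $k=2$ up to $k=s+1$ yields $\wht\lambda\in H^{s+2}_{\delta+1}$ and $\wht H\in\q H^{s+1}_{\delta+2}$ with the stated estimate; the regularity of $\tau$ then follows from its explicit formula together with $B\in W^{s+1,2}(\m S^1)$, $\Psi\in H^{s+1}_{\delta+2}$ and $\lambda$ being locally $H^{s+2}$.

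The only non-routine point --- and what I expect to be the main obstacle --- is the key claim $F_\lambda,F_j\in H^{k-1}_{\delta+3}$, precisely because the explicit part of $H$ decays only like $1/r$ and that of $\tau$ only like $1/r$, so a naive count gives $|H|^2,\tau^2\sim 1/r^2\notin H^0_{\delta+3}$, and likewise for the explicit parts of $H_{ij}\partial_i\lambda$, $\tau\partial_j\lambda$ and $\partial_j\tau$. The resolution rests on the algebraic identities $|M_\theta|^2=|N_\theta|^2=2$, $\langle M_\theta,N_\theta\rangle=0$, $M_\theta N_\theta+N_\theta M_\theta=0$, and the fact that contracting $M_\theta$ with the radial unit vector returns it: these force the $O(1/r^2)$ part of $-\tfrac12|H|^2+\tfrac14\tau^2$ to vanish, the $O(1/r^2)$ part of $\partial_iH_{ij}$ (coming from the explicit $1/r$ part of $H$) to match $\tfrac12\partial_j\tau$, and the explicit $O(1/r^2)$ parts of $H_{ij}\partial_i\lambda$ and $\tfrac12\tau\partial_j\lambda$ to cancel --- the same structural cancellations, together with the matching of the explicit pieces of $H$ and $\tau$ (which is also why only first, not second, tangential derivatives of $\wht b$ and of $B$ survive in $F_\lambda,F_j$), that make Theorem~\ref{chap2:main} consistent in the first place. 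What is left is, schematically, a product of a radial weight $r^{-a}\chi(r)$ --- with $a$ strictly larger than the naive exponent, and shifted at worst by the $O(\ep)$ amount coming from $e^{-\lambda}=r^{\alpha}e^{-\wht\lambda}$ --- with one of: the tensor $\wht H\in\q H^{k-1}_{\delta+2}$, an $L^2(\m S^1)$ angular profile against a radial cutoff, or a smooth compactly supported factor. Each such product is then controlled in $H^{k-1}_{\delta+3}$ by Proposition~\ref{chap2:produit}, Lemma~\ref{chap2:produit2}, and --- for the angular-profile terms --- Lemma~\ref{chap2:produit3}, using the weighted Sobolev embedding Proposition~\ref{chap2:holder} to bound $\wht\lambda$ and $\wht H$ in weighted Hölder norms where they multiply the slowly decaying explicit terms; the smallness $\alpha,\rho,\|B\|_{W^{1,2}}=O(\ep)$ keeps all weight exponents inside the admissible ranges. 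The remaining accounting of weights, derivative orders and angular regularities is routine bookkeeping.
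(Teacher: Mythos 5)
Your proposal is correct and follows exactly the route the paper intends: the paper offers no written proof beyond asserting that the corollary is a "straightforward consequence" of Theorem \ref{chap2:main} and the elliptic regularity statement of Corollary \ref{chap2:regu}, and your bootstrap (base solution from the fixed point, then iterated application of Corollary \ref{chap2:regu} to the Poisson equations for $\wht\lambda$ and the potential of $\wht H$, with the orthogonality conditions automatic from the closed-range characterisation) is the natural implementation of that remark. You in fact supply more detail than the paper does, correctly isolating the only delicate point, namely that the structural cancellations of the slowly decaying explicit parts persist at higher order so that the right-hand sides land in $H^{k-1}_{\delta+3}$.
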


\subsection{Outline of the proof}
We will prove the theorem using a fixed point argument.
\paragraph{Construction of the map $F$}
We consider the map
\begin{align*}
F:\m R\times \m R \times \m R \times H^2_{\delta+1}&\rightarrow \m R \times \m R \times \m R \times H^2_{\delta+1}\\
(\alpha, c_1, c_2, \wht \lambda)&\mapsto (\alpha', c_1', c_2', \wht \lambda')
\end{align*}
where if we note
$$(c_1,c_2)=r_c(\cos(\theta_c),\sin(\theta_c)), \quad (c'_1,c'_2)=r'_c(\cos(\theta'_c),\sin(\theta'_c))$$
and
\begin{align*}
\lambda= &-\alpha\chi(r)\ln(r)+ r_c\cos(\theta-\theta_c)\frac{\chi(r)}{r}+\wht \lambda\\
\lambda'=& -\alpha'\chi(r)\ln(r)+ r'_c\cos(\theta-\theta'_c)\frac{\chi(r)}{r}+\wht \lambda',
\end{align*}
then $\lambda'$ is the solution of
\begin{equation}
\label{chap2:eqlamb}
\Delta \lambda' + \frac{1}{2}\dot{u}^2 + \frac{1}{2}|\nabla u|^2+\frac{1}{2}|H|^2-\frac{\tau^2}{4} = 0,
\end{equation}
with
\begin{equation}
\label{chap2:defH}H=e^{-\lambda}H^{(1)}+H^{(2)}+e^{-\lambda}H^{(3)},
\end{equation}
where
\begin{align}
\label{chap2:defh2}H^{(2)}&=-b(\theta)\frac{\chi(r)}{2r}M_\theta 
-\frac{r_c}{2\alpha}(b(\theta)\sin(\theta-\theta_c))'
\frac{\chi(r)}{r^2}M_\theta
-\frac{r_c}{\alpha}b(\theta)\sin(\theta-\theta_c)\frac{\chi(r)}{r^2}N_\theta, \\
\label{chap2:defh3}H^{(3)}&=\frac{\chi(r)}{r^2}\left(-(1-\alpha)B(\theta)N_\theta -\frac{B'(\theta)}{2}M_\theta \right) ,
\end{align}
and $H$ satisfies
\begin{equation}\label{chap2:eqh}
\partial_i H_{ij} +H_{ij}\partial_i \lambda  = -\dot{u}.\partial_j u + \frac{1}{2} \partial_j \tau-\frac{1}{2} \tau\partial_j \lambda,
\end{equation}
with
\begin{equation}
\label{chap2:deftau}\tau=\tau^{(2)}+e^{-\lambda}\tau^{(3)}+A\Psi,
\end{equation}
where
\begin{align}
\label{chap2:deftau2}\tau^{(2)}&= b(\theta)\frac{\chi(r)}{r}+\frac{r_c}{\alpha}(b(\theta)\sin(\theta-\theta_c))'\frac{\chi(r)}{r^2},\\
\label{chap2:deftau3}\tau^{(3)}&=B'(\theta)\frac{\chi(r)}{r^2}.
\end{align}
We have noted 
\begin{equation}
\label{chap2:defb}
b(\theta)=\rho\cos(\theta-\eta)+ \wht b(\theta).
\end{equation}
The parameters $\rho,\eta$ and $A$ are suitably chosen during the process.
\paragraph{Solving \eqref{chap2:eqh}}
We will show that $H^{(1)}$ satisfies
$$\partial_i H^{(1)}_{ij}=f^{(1)}_j$$
with $f^{(1)}_j \in H^0_{\delta+3}$. We will prove that we may choose $\rho,\eta$ and $A$ such that
$$\int f^{(1)}_1=\int f^{(2)}_2=\int x_1f^{(1)}_1+x_2f^{(1)}_2=0.$$
Then we will show that $H^{(1)}$ can be written
$$H^{(1)}=J\frac{\chi(r)}{r^2}N_\theta+\wht H^{(1)},$$
with $\wht H^{(1)} \in H^1_{\delta+2}$.

\paragraph{Solving \eqref{chap2:eqlamb}}
We will show that 
$$\frac{1}{2}|H|^2-\frac{\tau^2}{4}\in H^0_{\delta+3}.$$
Then, it will be straightforward to solve \eqref{chap2:eqlamb} using Corollary \eqref{chap2:coro1}.
The solution we obtain is of the form 
$$ \lambda'=-\alpha'\chi(r)\ln(r)+ r'_c\cos(\theta-\theta'_c)\frac{\chi(r)}{r}+\wht \lambda',$$
with $\wht \lambda'\in H^2_{\delta+1}$.
\paragraph{The fixed point}
Proving that $F$ is a contracting map easily follows from the estimates for $\lambda'$ and $H$. The Picard fixed point theorem then implies that $F$ has a fixed point.
To obtain the result stated in Theorem \eqref{chap2:main} then easily folows after performing the following change of variables
$$x_1'=x_1-\frac{r_c}{\alpha}\cos(\theta_c), \quad x_2'=x_2+\frac{r_c}{\alpha}\sin(\theta_c),$$
which corresponds to work in a frame centered in the center of mass.
\\

The rest of the paper is as follows. In section \eqref{chap2:secmom}, we explain how to solve the momentum constraint \eqref{chap2:eqh}. We also explain how to choose $A,\rho,\eta$. In section \eqref{chap2:seclic}, we explain how to solve \eqref{chap2:eqlamb}. Finally, the map $F$ is shown to have a fixed point in section \eqref{chap2:secpf}.

\section{The momentum constraint}\label{chap2:secmom}
 The goal of this section is to solve equation \eqref{chap2:eqh}.
We will note
\begin{equation}
\label{chap2:normel}\|\lambda\|=|\alpha|+r_c+\|\wht \lambda\|_{H^2_{\delta+1}}.
\end{equation}
We assume a priori
$$\|\lambda\|\lesssim \ep,$$
\begin{equation}
\label{chap2:aprio}
\alpha \geq \frac{1}{8\pi}\left(\int \dot{u}^2+|\nabla u|^2\right).
\end{equation}
This yields
$$\frac{r_c}{\alpha}\lesssim \frac{\|\lambda\|}{\ep}\lesssim 1.$$

 \begin{prp}\label{chap2:prpeqh}If $\ep>0$ is small enough, there exists $\rho,\eta, A \in \m R$, such that 
 for
$$\tau=\tau^{(2)}+e^{-\lambda}\tau^{(3)}+A\Psi,$$
with $\tau^{(2)}, \tau^{(3)}$ defined by \eqref{chap2:deftau2} and \eqref{chap2:deftau3},
 there
 exists a solution of \eqref{chap2:eqh} which may be uniquely written under the form
 $$H=e^{-\lambda}H^{(1)}+H^{(2)}+e^{-\lambda}H^{(3)}$$
 where $H^{(2)}$ and $H^{(3)}$ are defined by \eqref{chap2:defh2} and \eqref{chap2:defh3} and 
$$H^{(1)}=J\frac{\chi(r)}{r^2}N_\theta +\wht H^{(1)},$$
with $e^{-\lambda}\wht H^{(1)} \in \q H^1_{\delta+2}$ such that
$$\|e^{-\lambda} \wht H^{(1)} \|_{\q H^1_{\delta+2}} \lesssim\|\dot{u}\nabla u \|_{H^0_{\delta+3}}+\|b\|_{W^{1,2}}+ \|B\|_{W^{1,2}}+|A|\lesssim \ep.$$
Moreover we have the estimates
\begin{align*}
\rho \cos(\eta)&=\frac{1}{\pi}\int \dot{u}.\partial_1 u +O(\ep^2),\\
\rho \sin(\eta)&=\frac{1}{\pi}\int \dot{u}.\partial_2 u +O(\ep^2),\\
A&=-\frac{1}{2\pi}\int r\dot{u}.\partial_r u
+\frac{1}{2\pi}\left(\int \chi'(r)rdr\right)\int \wht b(\theta)d\theta + O(\ep^2),\\
J&=-\frac{1}{2\pi}\int \dot{u}.\partial_\theta u+\frac{\rho r_c}{2\alpha}\sin(\eta-\theta_c)+O(\ep^2).
\end{align*}
 \end{prp}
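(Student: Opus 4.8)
The strategy is to rewrite equation \eqref{chap2:eqh} as a divergence equation $\partial_i H^{(1)}_{ij} = f^{(1)}_j$ for the unknown piece $H^{(1)}$, then apply Corollary \ref{chap2:coro2} after arranging the three scalar orthogonality conditions by a suitable choice of the three free parameters $\rho, \eta, A$. First I would substitute the ansatz \eqref{chap2:defH} together with $\tau$ given by \eqref{chap2:deftau} into \eqref{chap2:eqh}. The terms $H^{(2)}$ and $H^{(3)}$ are designed so that their divergences, together with the $H_{ij}\partial_i\lambda$ and the $\tau$-terms, cancel the explicit non-decaying contributions at leading order; what remains after collecting everything is an equation of the form $\partial_i\big(e^{-\lambda}H^{(1)}_{ij}\big) = \text{(source)}_j$, or, expanding the product and absorbing $e^{-\lambda}H^{(1)}_{ij}\partial_i\lambda$ into the source, $\partial_i H^{(1)}_{ij} = f^{(1)}_j$ with
\begin{equation*}
f^{(1)}_j = e^{\lambda}\Big(-\dot u.\partial_j u + \tfrac12\partial_j\tau - \tfrac12\tau\partial_j\lambda - H^{(1)}_{ij}\partial_i\lambda - \partial_i(H^{(2)}+e^{-\lambda}H^{(3)})_{ij} - (H^{(2)}+e^{-\lambda}H^{(3)})_{ij}\partial_i\lambda\Big).
\end{equation*}
The bookkeeping here is the main computational burden: one must verify that every term in $f^{(1)}_j$ lies in $H^0_{\delta+3}$. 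The worst terms are the quadratic interactions of the $\chi(r)/r$ and $\chi(r)/r^2$ pieces with $\partial\lambda \sim \chi(r)/r$, which decay like $r^{-2}$ and $r^{-3}$ respectively; Lemma \ref{chap2:produit2}, Lemma \ref{chap2:produit3} (handling the angular factors $b(\theta), B(\theta)$ via $W^{1,2}(\m S^1)$), Lemma \ref{chap2:der} and Proposition \ref{chap2:produit} together show these belong to $H^0_{\delta+3}$ for $-1<\delta<0$, and the cross terms with $\partial u$ and $\dot u$ do too since $\dot u^2, |\nabla u|^2 \in H^0_{\delta+3}$ by hypothesis. The design of $H^{(2)}$, with its precise coefficients $-\frac{r_c}{2\alpha}$ and $-\frac{r_c}{\alpha}$, is exactly what makes the $r^{-2}$ pieces coming from the center-of-mass shift $r_c\cos(\theta-\theta_c)\chi(r)/r$ in $\lambda$ cancel rather than merely decay; this has to be checked by an explicit but routine expansion.

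Next I would impose the three orthogonality conditions required by Corollary \ref{chap2:coro2}, namely $\int f^{(1)}_1 = 0$, $\int f^{(1)}_2 = 0$, and $\int (x_1 f^{(1)}_1 + x_2 f^{(1)}_2) = 0$. Each of these is, after integration by parts (using that the boundary terms at infinity vanish because $H^{(1)}\in \q H^1_{\delta+2}$ with $\delta+2<2$, wait — more carefully, using that $\partial_i H^{(1)}_{ij}$ has zero integral against $1$ and $x_k$ for the decay rates at hand), an affine equation in the unknowns $\rho\cos\eta, \rho\sin\eta$ and $A$: the leading contribution of $\int f^{(1)}_j$ is $-\int \dot u.\partial_j u$ plus a linear term in $\rho\cos\eta,\rho\sin\eta$ coming from $\partial_j\tau^{(2)} - \tau^{(2)}\partial_j\lambda$ integrated, and the leading contribution of $\int(x_1 f^{(1)}_1 + x_2 f^{(1)}_2)$ is $-\int r\dot u.\partial_r u$ plus a term proportional to $A\int\Psi = 2\pi A$ and a term $\big(\int\chi'(r)r\,dr\big)\int\wht b(\theta)\,d\theta$ coming from the $b(\theta)\chi(r)/r$ part of $\tau^{(2)}$. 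The orthogonality condition \eqref{ortob} on $\wht b$ ensures that the $\rho,\eta$ contributions decouple correctly from the $A$-equation. Writing this $3\times 3$ system schematically as $(\text{invertible matrix} + O(\ep))\cdot(\rho\cos\eta, \rho\sin\eta, A) = (\text{data} + O(\ep^2))$, one solves it for $\ep$ small; the off-diagonal and higher-order entries are quadratic in the small quantities, which is what produces the stated $O(\ep^2)$ error terms and, after solving, the explicit formulas for $\rho\cos\eta$, $\rho\sin\eta$, $A$.

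With the orthogonality conditions satisfied, Corollary \ref{chap2:coro2} applies to the pair $(f^{(1)}_1, f^{(1)}_2)$ and yields a unique symmetric traceless tensor $H^{(1)} = A'\frac{\chi(r)}{r^2}M_\theta + J\frac{\chi(r)}{r^2}N_\theta + \wht H^{(1)}$ with $\wht H^{(1)} \in \q H^1_{\delta+2}$; but the $M_\theta$-coefficient is $A' = \frac{1}{2\pi}\int(x_1 f^{(1)}_1 + x_2 f^{(1)}_2) = 0$ by the third orthogonality condition, so in fact $H^{(1)} = J\frac{\chi(r)}{r^2}N_\theta + \wht H^{(1)}$ as claimed, with $J = \frac{1}{2\pi}\int(x_1 f^{(1)}_2 - x_2 f^{(1)}_1)$; expanding this integral to leading order gives $J = -\frac{1}{2\pi}\int \dot u.\partial_\theta u + \frac{\rho r_c}{2\alpha}\sin(\eta-\theta_c) + O(\ep^2)$, where the second term comes from the angular-momentum-type contribution of the $N_\theta$ and $M_\theta$ parts of $H^{(2)}$ paired against $x_k$. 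The estimate $\|\wht H^{(1)}\|_{\q H^1_{\delta+2}} \lesssim \|f^{(1)}_1\|_{H^0_{\delta+3}} + \|f^{(1)}_2\|_{H^0_{\delta+3}}$ from Corollary \ref{chap2:coro2}, combined with the bound on $f^{(1)}_j$ established above, gives $\|\wht H^{(1)}\|_{\q H^1_{\delta+2}} \lesssim \|\dot u\nabla u\|_{H^0_{\delta+3}} + \|b\|_{W^{1,2}} + \|B\|_{W^{1,2}} + |A| \lesssim \ep$; the factor $e^{-\lambda}$ is harmless since $\lambda$ is bounded (indeed $e^{-\lambda} = 1 + O(\ep)$ pointwise on compact sets and $e^{-\lambda}\sim r^{\alpha}$ at infinity with $\alpha = O(\ep)$, which by Lemma \ref{chap2:produit2} perturbs the weight negligibly). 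I expect the single hardest step to be the term-by-term verification that $f^{(1)}_j \in H^0_{\delta+3}$, i.e. that all the designed cancellations in the $r^{-2}$ coefficients actually occur — this is where the specific form of $H^{(2)}$, $\tau^{(2)}$ and the center-of-mass ansatz for $\lambda$ must be reconciled, and it is easy to drop a term; everything downstream (the $3\times3$ linear algebra and the application of Corollaries \ref{chap2:coro1}--\ref{chap2:coro2}) is then routine.
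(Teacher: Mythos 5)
Your overall strategy coincides with the paper's: reduce \eqref{chap2:eqh} to $\partial_i H^{(1)}_{ij}=f^{(1)}_j$, enforce the three orthogonality conditions of Corollary \ref{chap2:coro2} by solving a near-identity $3\times 3$ linear system in $(\rho\cos\eta,\rho\sin\eta,A)$, and read off $J=\frac{1}{2\pi}\int(x_1f^{(1)}_2-x_2f^{(1)}_1)$ while the third orthogonality condition kills the $M_\theta$ coefficient. There is, however, one concrete error in your reduction: your displayed source $f^{(1)}_j$ contains the term $-H^{(1)}_{ij}\partial_i\lambda$, so as written the right-hand side depends on the unknown $H^{(1)}$, and everything downstream --- the orthogonality conditions $\int f^{(1)}_j=0$, the $3\times3$ system determining $\rho,\eta,A$, and the direct application of Corollary \ref{chap2:coro2} to ``data'' --- becomes circular. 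In fact that term is not there: the whole point of putting the prefactor $e^{-\lambda}$ on $H^{(1)}$ in the ansatz \eqref{chap2:defH} is the identity $\partial_i(e^{-\lambda}H^{(1)})_{ij}+e^{-\lambda}H^{(1)}_{ij}\partial_i\lambda=e^{-\lambda}\partial_i H^{(1)}_{ij}$, so the $H^{(1)}\partial\lambda$ contribution cancels identically and the source \eqref{deffj} involves only $\dot u.\partial_j u$, the $A\Psi$ terms, and the $H^{(2)},H^{(3)},\tau^{(2)},\tau^{(3)}$ blocks. With this one-line correction your argument is the paper's argument.

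A second, smaller imprecision concerns the weight bookkeeping for $e^{\pm\lambda}$, which is not quite ``negligible.'' The paper first uses $|e^{\lambda}|\lesssim(1+r^2)^{-\alpha/2}$ to place $f^{(1)}_j$ in $H^0_{\delta+3+\alpha}$, applies Corollary \ref{chap2:coro2} at the shifted weight (legitimate since $-1<\delta+\alpha<0$ for $\ep$ small) to obtain $\wht H^{(1)}\in\q H^1_{\delta+2+\alpha}$, and only then multiplies back by $e^{-\lambda}$, differentiating the product term by term, to land exactly in $\q H^1_{\delta+2}$. If instead you apply Corollary \ref{chap2:coro2} at weight $\delta$ directly, you get $\wht H^{(1)}\in\q H^1_{\delta+2}$ but then $e^{-\lambda}\wht H^{(1)}$ only lies in $\q H^1_{\delta+2-\alpha}$, which is weaker than the stated conclusion; the gain of $\alpha$ on the source side is exactly what pays for the loss on the solution side, and it must be tracked explicitly.
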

 \begin{proof}
We introduce the notation
\begin{align}
\label{chap2:defhj2}
h_j^{(2)}&=-\frac{1}{2}\tau^{(2)}\partial_j \lambda -H^{(2)}_{ij}\partial_j \lambda,  \\
\label{chap2:defgj2}
h_j^{(3)}&=\frac{1}{2}\partial_j \left(e^{-\lambda}\tau^{(3)}\right) 
-\frac{1}{2}e^{-\lambda}\tau^{(3)}\partial_j \lambda-
\partial_i(e^{-\lambda} H^{(3)})_{ij}-e^{-\lambda}H^{(3)}_{ij}\partial_j\lambda.
\end{align}
In view of \eqref{chap2:defH}, \eqref{chap2:eqh} and \eqref{chap2:deftau} an easy calculation yields
 \begin{equation}
 \label{chap2:eqfj}\partial_i H_{ij}^{(1)}= f^{(1)}_j
 \end{equation}
where
\begin{equation}
\label{deffj} f^{(1)}_j= e^{\lambda}\Big(-\dot{u}.\partial_j u+\frac{1}{2}\partial_j(A\Psi)-\frac{1}{2}A\Psi\partial_j \lambda
 + h_j^{(2)}+h_j^{(3)} 
+\frac{1}{2}\partial_j \tau^{(2)} -\partial_i H^{(2)}_{ij}\Big).
\end{equation}
 The three following propositions, proved respectively in Sections \eqref{chap2:sech1}, \eqref{chap2:sech2} and \eqref{chap2:sech3} allow us to estimate the different contributions to $f_j^{(1)}$.
\begin{prp}\label{chap2:prph1}
We have
\begin{align*}
\frac{1}{2}\partial_1 \tau^{(2)} -\partial_i H^{(2)}_{i1}
=&\frac{\chi'(r)}{r}b(\theta)\cos(\theta)+\frac{\chi'(r)}{r^2}\frac{r_c}{\alpha}\left(b(\theta)\sin(\theta-\theta_c)\cos(\theta)\right)',\\
\frac{1}{2}\partial_2\tau^{(2)} -\partial_i H^{(2)}_{i2}
=&\frac{\chi'(r)}{r}b(\theta)\sin(\theta)
+\frac{\chi'(r)}{r^2}\frac{r_c}{\alpha}\left(b(\theta)\sin(\theta-\theta_c)\sin(\theta)\right)'.
\end{align*}
\end{prp}

\begin{prp}\label{chap2:prph2}
We have $h^{(2)}_j \in H^0_{\delta+3}$, with
$$\|h^{(2)}_j \|_{H^0_{\delta+3}} \lesssim  \|\lambda\|\|b\|_{W^{1,2}}.$$
\end{prp}
\begin{prp}\label{chap2:prph3}
We have
$h^{(3)}_j\in H^0_{\delta+3},$ with
$$\|h^{(3)}_j \|_{H^0_{\delta+3}}\lesssim \|B\|_{W^{1,2}}.$$
\end{prp}
We have $e^{-\lambda}f_j^{(1)} \in H^0_{\delta + 3}$ :
\begin{itemize}
\item For $h_j^{(2)}$ and $h_j^{(3)}$ this follows from Propositions \eqref{chap2:prph2} and \eqref{chap2:prph3}.
\item For $\frac{1}{2}\partial_j\tau^{(2)} -\partial_i H^{(2)}_{j2}$, this is a consequence of Proposition $\eqref{chap2:prph1}$. Since $\chi'$ is compactly supported, we have
$$\left\|\frac{1}{2}\partial_j\tau^{(2)} -\partial_i H^{(2)}_{j2}\right\|_{H^0_{\delta+3}}\lesssim \|b\|_{W^{1,2}(\m S^1)}.$$
\item Since $\Psi \in H^1_{\delta +2}$, we have in view of Lemma \ref{chap2:der}
$$\|A\partial_j \Psi\|_{H^0_{\delta+3}} \lesssim |A|.$$
\item We have
\begin{align*}A\Psi\partial_j \lambda=&\left(-\alpha\frac{\chi(r)}{r}- r_c\cos(\theta-\theta_c)\frac{\chi(r)}{r^2}
-\alpha \chi'(r)\ln(r)+r_c\cos(\theta-\theta_c)\frac{\chi'(r)}{r}\right)A\Psi \partial_j r\\
&- r_c\sin(\theta-\theta_c)\frac{\chi(r)}{r}A\Psi \partial_j \theta +A\Psi\partial_j \wht \lambda,
\end{align*}
and since $\chi'$ is compactly supported we have
$$\|A\Psi\partial_j \lambda\|_{H^0_{\delta+3}} \lesssim \left\|A\Psi \frac{\alpha}{1+r}\right\|_{H^0_{\delta+3}}+
\left\|A\Psi \frac{r_c}{1+r^2}\right\|_{H^0_{\delta +3}}+\| A\Psi \partial_j \wht \lambda\|_{H^0_{\delta+3}}.
+|A|.$$
For the terms of the form $A\Psi \frac{\alpha}{1+r}$ and $A\Psi \frac{r_c}{1+r^2}$, we use Lemma \eqref{chap2:produit2} which yields
$$\left\|A\Psi \frac{\alpha}{1+r}\right\|_{H^0_{\delta+3}}\lesssim |A||\alpha|, \quad
\left\|A\Psi \frac{r_c}{1+r^2}\right\|_{H^0_{\delta +3}}\lesssim |A||r_c|.$$
\item For the term $A\Psi \partial_j \wht \lambda$ we use Proposition \eqref{chap2:produit} which yields
$$\| A\Psi \partial_j \wht \lambda\|_{H^0_{\delta+3}}\lesssim |A|\|\wht \lambda\|_{H^2_{\delta+1}}.$$
\end{itemize}
Consequently we have
$$\|e^{-\lambda}f^{(1)}_j\|_{H^0_{\delta+3}}\lesssim \|\dot{u}\nabla u \|_{H^0_{\delta+3}}+\|b\|_{W^{1,2}}+ \|B\|_{W^{1,2}}+|A|.$$
We have 
$$\lambda= -\alpha\chi(r)\ln(r)+\frac{r_c\cos(\theta-\theta_c)\chi(r)}{r}+\wht \lambda,$$
with $\wht \lambda \in H^2_{\delta+1}\subset L^\infty$ thanks to Proposition \eqref{chap2:holder}. Therefore
$$|e^{\lambda}|\lesssim (1+r^2)^{-\frac{\alpha}{2}},$$
and Lemma \eqref{chap2:produit2} yields $f^{(1)}_j \in H^0_{\delta+3+\alpha}$ with
$$\|f^{(1)}_j\|_{H^0_{\delta+3+\alpha}}\lesssim  \|\dot{u}\nabla u \|_{H^0_{\delta+3}}+\|b\|_{W^{1,2}}+ \|B\|_{W^{1,2}}+|A|.
$$
We want to solve \eqref{chap2:eqfj} with Corollary \eqref{chap2:coro2}. To this end, we need
\begin{equation}
\label{condint}\int f^{(1)}_1 = \int f^{(1)}_2=0.
\end{equation}
The following proposition, proven in Section \eqref{chap2:sech4}, allows us to carefully choose the parameters $\rho,\eta, A$
in order to enforce the orthogonality condition \eqref{condint}.
\begin{prp}\label{chap2:prph4}
If $\ep>0$ is small enough, there exist $\rho, \eta, A \in \m R$ such that
$$\int f^{(1)}_1 = \int f^{(1)}_2=\int x_1 f^{(1)}_1+x_2f^{(2)}_2=0.$$
Moreover we have
\begin{align*}
\rho \cos(\eta)&=\frac{1}{\pi}\int e^{\lambda}\dot{u}\partial_1 u +O(\ep^2),\\
\rho \sin(\eta)&=\frac{1}{\pi}\int e^{\lambda}\dot{u}\partial_2 u +O(\ep^2),\\
A&=-\frac{1}{2\pi}\int e^{\lambda}\dot{u}r\partial_r u
+\frac{1}{2\pi}\left(\int \chi'(r)rdr\right)\int b(\theta)d\theta + O(\ep^2).
\end{align*}
\end{prp}
We choose $\rho, \eta, A$ according to Proposition \eqref{chap2:prph4}. Since $|\alpha|\lesssim \ep$, if $\ep>0$
is small enough, we have $-1<\delta+\alpha<0$.
Since $\int f^{(1)}_1=\int f^{(1)}_2=0$, we can apply Corollary \eqref{chap2:coro2}.
Since $\int x_1f^{(1)}_1+x_2f^{(1)}_2=0$, we obtain
$$H^{(1)}=J\frac{\chi(r)}{r^2}N_\theta +\wht H^{(1)},$$
with $\wht H^{(1)} \in \q H^1_{\delta+2+\alpha}$ such that
$$\left\|\wht H^{(1)} \right\|_{H^1_{\delta+2+\alpha}}\lesssim \left\|f_1^{(1)}\right\|_{H^0_{\delta+3+\alpha}} + \left\|f_2^{(1)}\right\|_{H^0_{\delta+3+\alpha}} \lesssim \|\dot{u}\nabla u \|_{H^0_{\delta+3}}+\|b\|_{W^{1,2}}+ \|B\|_{W^{1,2}}+|A|,$$
and
\begin{equation}\label{chap2:defJ}\begin{split}
J=&\frac{1}{2\pi}\int x_1f^{(1)}_2-x_2f^{(1)}_1\\
=&\frac{1}{2\pi}
\int_{\m R^2} e^{\lambda}\left(-\dot{u}\partial_\theta u
-A\Psi \partial_\theta \lambda +x_1(h^{(2)}_2+h^{(3)}_2)-x_2(h^{(2)}_1+h^{(3)}_1)\right)
+\frac{\rho r_c}{2\alpha}\sin(\eta-\theta_c)\\
&+\frac{r_c}{\alpha}\int (e^{\lambda}-1)\frac{\chi'(r)}{r}b(\theta)\sin(\theta-\theta_c)\\
=&-\frac{1}{2\pi}\int_{\m R^2} e^{\lambda}\dot{u}\partial_\theta u+\rho \frac{r_c}{2\alpha}\sin(\theta-\theta_c)+O(\ep^2)
\end{split}
\end{equation}
where we have used the definition \eqref{deffj} of $f^{(1)}_j$, $x_1\partial_2-x_2\partial_1= \partial_\theta$, Proposition \ref{chap2:prph1} and the following calculations
$$\frac{1}{2}\int e^{\lambda} A\partial_\theta \Psi=-\frac{1}{2}\int e^{\lambda}A\Psi\partial_\theta \lambda,$$
\begin{align*}
&\int e^\lambda\left(x_1\frac{\chi'(r)}{r}b(\theta)\sin(\theta)-x_2\frac{\chi'(r)}{r}b(\theta)\cos(\theta)\right)\\=&\int e^{\lambda}\chi'(r) b(\theta)(\cos(\theta)\sin(\theta)-\cos(\theta)\sin(\theta))rdrd\theta\\
=& 0,
\end{align*}
\begin{align*}
&\int e^{\lambda}\frac{\chi'(r)}{r^2}\frac{r_c}{\alpha}\Bigg( x_1\left(b(\theta)\sin(\theta-\theta_c)\sin(\theta)\right)'
-x_2\left(b(\theta)\sin(\theta-\theta_c)\cos(\theta)\right)'\Bigg)\\
=&-\frac{r_c}{\alpha}\int e^{\lambda}\frac{\chi'(r)}{r}b(\theta)\sin(\theta-\theta_c)
(-\sin^2(\theta)-\cos^2(\theta))\\
&-\frac{r_c}{\alpha}\int\partial_\theta \lambda e^\lambda \frac{\chi'(r)}{r}b(\theta)\sin(\theta-\theta_c)(\cos(\theta)\sin(\theta)-\sin(\theta)\cos(\theta))
\\
=&\frac{r_c}{\alpha}\left(\int \chi'(r)dr\right)\left(\int b(\theta)\sin(\theta-\theta_c)d\theta \right)
+\frac{r_c}{\alpha}\int (e^{\lambda}-1)\frac{\chi'(r)}{r}b(\theta)\sin(\theta-\theta_c)\\
=& \pi \frac{\rho r_c}{\alpha}\sin(\eta-\theta_c)
+\frac{r_c}{\alpha}\int (e^{\lambda}-1)\frac{\chi'(r)}{r}b(\theta)\sin(\theta-\theta_c),
\end{align*}
where we have used in the last equality the definition of $b$ \eqref{chap2:defb} and the orthogonality condition \eqref{ortob} for $\wht b$.
It remains to estimate $e^{-\lambda }\wht H^{(1)}$ in $\q H^1_{\delta+2}$. First, we note that since 
$$-\lambda-\alpha\chi(r)\ln(r)$$
 is bounded, thanks to Lemma \eqref{chap2:produit2} and the fact that $\wht H^{(1)}\in \q H^1_{\delta+2+\alpha}$ we have
$e^{-\lambda} \wht H^{(1)} \in \q H^0_{\delta+2}$. We now calculate $\nabla(e^{-\lambda} \wht H^{(1)} )$. The contributions are
\begin{itemize}
\item the term $e^{-\lambda}\nabla \wht H^{(1)}$ : since $\nabla \wht H^{(1)}\in \q H^0_{\delta+\alpha+3}$, we have  $e^{-\lambda}\nabla \wht H^{(1)} \in \q H^0_{\delta+3}$ thanks to Lemma \eqref{chap2:produit2},
\item the term $\frac{\alpha\chi(r)}{r}e^{-\lambda} \wht H^{(1)}$ : it also belongs to $\q H^0_{\delta+3}$ thanks to Lemma \eqref{chap2:produit2}.
\item The term $e^{-\lambda}\wht H^{(1)}\nabla \wht \lambda$ : thanks to Proposition \eqref{chap2:produit}, $\wht H^{(1)}\nabla \wht \lambda$ belong to $\q H^0_{\delta+3+\alpha}$, and therefore, thanks to Lemma \eqref{chap2:produit2},
we have $e^{-\lambda}\wht H^{(1)}\nabla \wht \lambda \in \q H^0_{\delta+3}$.
\end{itemize}
Consequently, we have $\nabla(e^{-\lambda} \wht H^{(1)} ) \in\q H^0_{\delta +3}$ and therefore $e^{-\lambda} \wht H^{(1)} \in \q H^1_{\delta+2}$ with
$$\|e^{-\lambda} \wht H^{(1)} \|_{\q H^1_{\delta+2}}\lesssim \|\dot{u}\nabla u \|_{H^0_{\delta+3}}+\|b\|_{W^{1,2}}+ \|B\|_{W^{1,2}}+|A|\lesssim \ep.$$
This concludes the proof of Proposition \eqref{chap2:prpeqh}.
\end{proof}
\subsection{Proof of Proposition \eqref{chap2:prph1}}\label{chap2:sech1}
We calculate
\begin{align*}
\partial_i \left( b(\theta)\frac{-\chi(r)}{2r}M_\theta\right)_{i1}
=&-\frac{b(\theta)}{2}\left(\frac{\chi'(r)}{r}-\frac{\chi(r)}{r^2}\right)(\cos(\theta)\cos(2\theta)+\sin(\theta)\sin(2\theta))\\
&-\frac{b(\theta)\chi(r)}{r^2}(\sin(\theta)\sin(2\theta)+\cos(\theta)\cos(2\theta))\\
&-\frac{b'(\theta) \chi(r)}{2r^2}
(-\sin(\theta)\cos(2\theta)+\cos(\theta)\sin(2\theta))\\
=&-\frac{b(\theta)\chi(r)}{2r^2}\cos(\theta)-\frac{b'(\theta)\chi(r)}{2r^2}\sin(\theta)-\frac{b(\theta)\chi'(r)}{2r}\cos(\theta),\\
\frac{1}{2}\partial_1\left(b(\theta)\frac{\chi(r)}{r}\right)=&\frac{1}{2}b(\theta)\left(\frac{\chi'(r)}{r}-\frac{\chi(r)}{r^2}\right)\cos(\theta)-\frac{1}{2}b'(\theta)\frac{\chi(r)}{r^2}\sin(\theta).
\end{align*}
Therefore
$$\frac{1}{2}\partial_1\left(b(\theta)\frac{\chi(r)}{r}\right)-\partial_i \left( b(\theta)\frac{-\chi(r)}{2r}M_\theta\right)_{i1}=\frac{b(\theta)\chi'(r)}{r}\cos(\theta).$$
For $j=2$ we obtain
\begin{align*}
\partial_i \left( b(\theta)\frac{-\chi(r)}{2r}M_\theta\right)_{i2}
=&-\frac{b(\theta)}{2}\left(\frac{\chi'(r)}{r}-\frac{\chi(r)}{r^2}\right)(\cos(\theta)\sin(2\theta)-\sin(\theta)\cos(2\theta))\\
&-\frac{b(\theta)\chi(r)}{r^2}(-\sin(\theta)\cos(2\theta)+\cos(\theta)\sin(2\theta))\\
&-\frac{b'(\theta) \chi(r)}{2r^2}
(-\sin(\theta)\sin(2\theta)-\cos(\theta)\cos(2\theta))\\
=&-\frac{b(\theta)\chi(r)}{2r^2}\sin(\theta)+\frac{b'(\theta)\chi(r)}{2r^2}\cos(\theta)-\frac{b(\theta)\chi'(r)}{2r}\sin(\theta),\\
\frac{1}{2}\partial_2\left(b(\theta)\frac{\chi(r)}{r}\right)=&\frac{1}{2}b(\theta)\left(\frac{\chi'(r)}{r}-\frac{\chi(r)}{r^2}\right)\sin(\theta)+\frac{1}{2}b'(\theta)\frac{\chi(r)}{r^2}\cos(\theta).
\end{align*}
Therefore
$$\frac{1}{2}\partial_2\left(b(\theta)\frac{\chi(r)}{r}\right)-\partial_i \left( b(\theta)\frac{-\chi(r)}{2r}M_\theta\right)_{i2}=\frac{b(\theta)\chi'(r)}{r}\sin(\theta).$$
We now calculate the other contributions. We note that $\frac{1}{r^2}M_\theta$ and $\frac{1}{r^2}N_\theta$ satisfy
\begin{equation}
\label{calcmn}\partial_i \left(\frac{1}{r^2}M_\theta\right)_{ij}=\partial_i \left(\frac{1}{r^2}N_\theta\right)_{ij}=0,\quad for\;r>0.
\end{equation}This yields
\begin{align*}
&\partial_i\left(-\frac{r_c}{2\alpha}(b(\theta)\sin(\theta-\theta_c))'
\frac{\chi(r)}{r^2}M_\theta
-\frac{r_c}{\alpha}b(\theta)\sin(\theta-\theta_c)\frac{\chi(r)}{r^2}N_\theta\right)_{i1}\\
=&-\frac{r_c}{2\alpha}(b(\theta)\sin(\theta-\theta_c))'(\cos(\theta)\cos(2\theta)+\sin(\theta)\sin(2\theta))\frac{\chi'(r)}{r^2}\\
&-\frac{r_c}{\alpha}b(\theta)\sin(\theta-\theta_c)(-\cos(\theta)\sin(2\theta)+\sin(\theta)\cos(2\theta))\frac{\chi'(r)}{r^2}\\
&-\frac{r_c}{2\alpha}(b(\theta)\sin(\theta-\theta_c))''(-\sin(\theta)\cos(2\theta)+\cos(\theta)\sin(2\theta))\frac{\chi(r)}{r^3}\\
&-\frac{r_c}{\alpha}(b(\theta)\sin(\theta-\theta_c))'(\sin(\theta)\sin(2\theta)+\cos(\theta)\cos(2\theta))\frac{\chi(r)}{r^3}\\
=&\frac{r_c}{\alpha}\left(b(\theta)\sin(\theta-\theta_c)\sin(\theta)-\frac{1}{2}(b(\theta)\sin(\theta-\theta_c))'\cos(\theta)\right)\frac{\chi'(r)}{r^2}\\
&+\frac{r_c}{\alpha}\left(-(b(\theta)\sin(\theta-\theta_c))'\cos(\theta)-\frac{1}{2}(b(\theta)\sin(\theta-\theta_c))''\sin(\theta)\right)
\frac{\chi(r)}{r^3}.
\end{align*}
We now calculate the term involving $\tau$.
\begin{align*}
&\frac{1}{2}\partial_1\left(\frac{r_c}{\alpha}(b(\theta)\sin(\theta-\theta_c))'\frac{\chi(r)}{r^2}\right)\\
=&\frac{r_c}{2\alpha}\left(\frac{\chi'(r)}{r^2}-2\frac{\chi(r)}{r^3}\right)\cos(\theta)(b(\theta)\sin(\theta-\theta_c))'
+\frac{r_c}{2\alpha}(-\sin(\theta))(b(\theta)\sin(\theta-\theta_c))''\frac{\chi(r)}{r^3}\\
=&\frac{r_c}{2\alpha}\cos(\theta)(b(\theta)\sin(\theta-\theta_c))'\frac{\chi'(r)}{r^2}\\
&+\frac{r_c}{\alpha}\left(
-(b(\theta)\sin(\theta-\theta_c))'\cos(\theta)
-\frac{1}{2}(b(\theta)\sin(\theta-\theta_c))''\sin(\theta)\right)
\frac{\chi(r)}{r^3}.
\end{align*}
Therefore we have
\begin{align*}
&\frac{1}{2}\partial_1\left(\frac{r_c}{\alpha}(b(\theta)\sin(\theta-\theta_c))'\frac{\chi(r)}{r^2}\right)\\
&-\partial_i\left(-\frac{r_c}{2\alpha}(b(\theta)\sin(\theta-\theta_c))'
\frac{\chi(r)}{r^2}M_\theta
-\frac{r_c}{\alpha}b(\theta)\sin(\theta-\theta_c)\frac{\chi(r)}{r^2}N_\theta\right)_{i1}\\
=&\frac{r_c}{\alpha}\left(\cos(\theta)(b(\theta)\sin(\theta-\theta_c))'-\sin(\theta)b(\theta)\sin(\theta-\theta_c) \right)\frac{\chi'(r)}{r^2}\\
=&\frac{r_c}{\alpha}\left(\cos(\theta)b(\theta)\sin(\theta-\theta_c)\right)'\frac{\chi'(r)}{r^2}.
\end{align*}
For $j=2$ we obtain
\begin{align*}
&\partial_i\left(-\frac{r_c}{2\alpha}(b(\theta)\sin(\theta-\theta_c))'
\frac{\chi(r)}{r^2}M_\theta
-\frac{r_c}{\alpha}b(\theta)\sin(\theta-\theta_c)\frac{\chi(r)}{r^2}N_\theta\right)_{i2}\\
=&-\frac{r_c}{2\alpha}(b(\theta)\sin(\theta-\theta_c))'(\cos(\theta)\sin(2\theta)-\sin(\theta)\cos(2\theta))\frac{\chi'(r)}{r^2}\\
&-\frac{r_c}{\alpha}b(\theta)\sin(\theta-\theta_c)(\cos(\theta)\cos(2\theta)+\sin(\theta)\sin(2\theta))\frac{\chi'(r)}{r^2}\\
&-\frac{r_c}{2\alpha}(b(\theta)\sin(\theta-\theta_c))''(-\sin(\theta)\sin(2\theta)-\cos(\theta)\cos(2\theta))\frac{\chi(r)}{r^3}\\
&-\frac{r_c}{\alpha}(b(\theta)\sin(\theta-\theta_c))'(-\sin(\theta)\cos(2\theta)+\cos(\theta)\sin(2\theta))\frac{\chi(r)}{r^3}\\
=&\frac{r_c}{\alpha}\left(-b(\theta)\sin(\theta-\theta_c)\cos(\theta)-\frac{1}{2}(b(\theta)\sin(\theta-\theta_c))'\sin(\theta)\right)\frac{\chi'(r)}{r^2}\\
&+\frac{r_c}{\alpha}\left(-(b(\theta)\sin(\theta-\theta_c))'\sin(\theta)+\frac{1}{2}(b(\theta)\sin(\theta-\theta_c))''\cos(\theta)\right)
\frac{\chi(r)}{r^3}.
\end{align*}
We now calculate the term involving $\tau$.
\begin{align*}
&\frac{1}{2}\partial_2\left(\frac{r_c}{\alpha}(b(\theta)\sin(\theta-\theta_c))'\frac{\chi(r)}{r^2}\right)\\
=&\frac{r_c}{2\alpha}\left(\frac{\chi'(r)}{r^2}-2\frac{\chi(r)}{r^3}\right)\sin(\theta)(b(\theta)\sin(\theta-\theta_c))'
+\frac{r_c}{2\alpha}\cos(\theta)(b(\theta)\sin(\theta-\theta_c))''\frac{\chi(r)}{r^3}\\
=&\frac{r_c}{2\alpha}\sin(\theta)(b(\theta)\sin(\theta-\theta_c))'\frac{\chi'(r)}{r^2}\\
&+\frac{r_c}{\alpha}\left(
-(b(\theta)\sin(\theta-\theta_c))'\sin(\theta)
+\frac{1}{2}(b(\theta)\sin(\theta-\theta_c))''\cos(\theta)\right)
\frac{\chi(r)}{r^3}.
\end{align*}
Therefore we have
\begin{align*}
&\frac{1}{2}\partial_2\left(\frac{r_c}{\alpha}(b(\theta)\sin(\theta-\theta_c))'\frac{\chi(r)}{r^2}\right)\\
&-\partial_i\left(-\frac{r_c}{2\alpha}(b(\theta)\sin(\theta-\theta_c))'
\frac{\chi(r)}{r^2}M_\theta
-\frac{r_c}{\alpha}b(\theta)\sin(\theta-\theta_c)\frac{\chi(r)}{r^2}N_\theta\right)_{i2}\\
=&\frac{r_c}{\alpha}\left(\sin(\theta)(b(\theta)\sin(\theta-\theta_c))'+\cos(\theta)b(\theta)\sin(\theta-\theta_c) \right)\frac{\chi'(r)}{r^2}\\
=&\frac{r_c}{\alpha}\left(\sin(\theta)b(\theta)\sin(\theta-\theta_c)\right)'\frac{\chi'(r)}{r^2}.
\end{align*}
In view of \eqref{chap2:defh2} and \eqref{chap2:deftau2}, this concludes the proof of Proposition \eqref{chap2:prph1}.

\subsection{Proof of Proposition \eqref{chap2:prph2}}\label{chap2:sech2}
Since $|\nabla \wht \lambda|\in H^1_{\delta+2}$, Lemma \eqref{chap2:produit2} implies that the terms of the form
$\frac{|b|}{1+r}|\nabla \wht \lambda|$ belong to $H^0_{\delta+3}$ and satisfy
$$\left\|\frac{|b|}{1+r}|\nabla \wht \lambda|\right\|_{H^0_{\delta+3}}
\lesssim \|b\|_{L^\infty(\m S^1)}\|\nabla \wht \lambda\|_{H^0_{\delta+2}}$$
and consequently, with the Sobolev injection $W^{1,2}(\m S^1)\subset L^\infty(\m S^1)$,
\begin{equation}
\label{chap2:est1}
\left\|\frac{|b|}{1+r}|\nabla \wht \lambda|\right\|_{H^0_{\delta+3}}\lesssim \|b\|_{W^{1,2}(\m S^1)}\|\wht \lambda \|_{H^1_{\delta+1}}.
\end{equation}
Moreover, thanks to Lemma \eqref{chap2:produit3}, the terms of the form
$\frac{r_c}{\alpha}\frac{|b|+|b'|}{(1+r)^2}|\nabla \wht \lambda|$ belong to $H^0_{\delta+3}$ and satisfy
\begin{equation}\label{chap2:est2}
\left\|\frac{r_c}{\alpha}\frac{|b|+|b'|}{(1+r)^2}|\nabla \wht \lambda|\right\|_{H^0_{\delta + 3}}\lesssim  \|b\|_{W^{1,2}(\m S^1)}\|\wht \lambda \|_{H^2_{\delta+1}},
\end{equation}
where we have used, that thanks to \eqref{chap2:aprio}, $\frac{r_c}{\alpha}\lesssim 1$.
The terms of the form $\frac{r_c}{\alpha}\frac{|b|+|b'|}{(1+r)^2}\frac{r_c}{(1+r)^2}$ are also in $H^0_{\delta+3}$ and satisfy
\begin{equation}
\label{chap2:est3}
\left\|\frac{r_c}{\alpha}\frac{|b|+|b'|}{(1+r)^2}\frac{r_c}{(1+r)^2}\right\|_{H^0_{\delta+3}}\lesssim \|b\|_{W^{1,2}(\m S^1)}|r_c|.
\end{equation}
Finally, since $\chi'$ is compactly supported, we also have
\begin{equation}
\label{chap2:est7}
\left\|\frac{|b|}{1+r}\alpha\chi'(r)\ln(r)\right\|_{H^0_{\delta+3}}\lesssim \|b\|_{W^{1,2}(\m S^1)}|\alpha|,\quad\left\|\frac{|b|}{1+r}\frac{r_c\chi'(r)}{(1+r)}\right\|_{H^0_{\delta+3}}\lesssim \|b\|_{W^{1,2}(\m S^1)}|r_c|, \quad...
\end{equation}
Consequently, the terms which remain to calculate are the ones decaying like $\frac{1}{r^2}$ and $\frac{1}{r^3}$. We obtain
\begin{align*}
H^{(2)}_{i1}\partial_i \lambda
=&-\frac{\chi(r)}{2r}b(\theta)\Bigg(-\left(\frac{\alpha}{r}+\frac{r_c\cos(\theta-\theta_c)}{r^2}\right)(\cos(2\theta)\cos(\theta)+\sin(2\theta)\sin(\theta))\\
&-\frac{r_c\sin(\theta-\theta_c)}{r^2}(-\sin(\theta)\cos(2\theta)+\cos(\theta)\sin(2\theta))\Bigg)
\\
&-\frac{r_c\chi(r)}{2\alpha r^2}(\cos(\theta-\theta_c)b(\theta)+\sin(\theta-\theta_c)b'(\theta))\frac{-\alpha}{r}(\cos(2\theta)\cos(\theta)+\sin(2\theta)\sin(\theta))\\
&-\frac{r_c\chi(r)}{\alpha r^2}\sin(\theta-\theta_c)b(\theta)\frac{-\alpha}{r}(-\cos(\theta)\sin(2\theta)+\sin(\theta)\cos(2\theta))+h_1\\
=&\frac{\alpha b(\theta)\chi(r)}{2r^2}\cos(\theta)+
\frac{r_cb(\theta)\chi(r)}{r^3}\left(\cos(\theta)\cos(\theta-\theta_c)-\frac{1}{2}\sin(\theta)\sin(\theta-\theta_c)\right)\\
&+\frac{r_cb'(\theta)\chi(r)}{2r^3}\cos(\theta)\sin(\theta-\theta_c)+h_1,
\end{align*}
where, thanks to \eqref{chap2:est1}, \eqref{chap2:est2} and \eqref{chap2:est3}, $h_1\in H^0_{\delta+3}$ satisfies
$$\|h_1\|_{H^0_{\delta+3}} \lesssim \|\lambda\|\|b\|_{W^{1,2}}.$$
We calculate
\begin{align*}
-\frac{1}{2}\tau^{(2)}\partial_1\lambda=&-\frac{b(\theta)\chi(r)}{2r}
\left(-\frac{\alpha}{r}\cos(\theta)-\frac{r_c\cos(\theta-\theta_c)}{r^2}\cos(\theta)+\frac{r_c\sin(\theta-\theta_c)}{r^2}\sin(\theta)\right)\\
&-\frac{r_c}{2\alpha}(b(\theta)\cos(\theta-\theta_c)+b'(\theta)\sin(\theta-\theta_c))\frac{\chi(r)}{r^2}\left(\frac{-\alpha}{r}\cos(\theta)\right)+h_2\\
=&\frac{\alpha b(\theta)\chi(r)}{2r^2}\cos(\theta)+
\frac{r_cb(\theta)\chi(r)}{r^3}\left(\cos(\theta)\cos(\theta-\theta_c)-\frac{1}{2}\sin(\theta)\sin(\theta-\theta_c)\right)\\
&+\frac{r_cb'(\theta)\chi(r)}{2r^3}\cos(\theta)\sin(\theta-\theta_c)+h_2,
\end{align*}
where thanks to \eqref{chap2:est1}, \eqref{chap2:est2} and \eqref{chap2:est3}, $h_2\in H^0_{\delta+3}$ satisfies
$$\|h_2\|_{h^0_{\delta+3}} \lesssim  \|\lambda\|\|b\|_{W^{1,2}}.$$
Therefore
$$-\frac{1}{2}\tau^{(2)}\partial_1\lambda-H^{(2)}_{i1}\partial_i \lambda=h_2-h_1\in H^0_{\delta+3}.$$
For $j=2$ we obtain
\begin{align*}
H^{(2)}_{i2}\partial_i \lambda
=&-\frac{\chi(r)}{2r}b(\theta)\Bigg(-\left(\frac{\alpha}{r}+\frac{r_c\cos(\theta-\theta_c)}{r^2}\right)(-\cos(2\theta)\sin(\theta)+\sin(2\theta)\cos(\theta))\\
&-\frac{r_c\sin(\theta-\theta_c)}{r^2}(-\cos(\theta)\cos(2\theta)-\sin(\theta)\sin(2\theta))\Bigg)
\\
&-\frac{r_c\chi(r)}{2\alpha r^2}(\cos(\theta-\theta_c)b(\theta)+\sin(\theta-\theta_c)b'(\theta)\frac{-\alpha}{r}(-\cos(2\theta)\sin(\theta)+\sin(2\theta)\cos(\theta))\\
&-\frac{r_c\chi(r)}{\alpha r^2}\sin(\theta-\theta_c)b(\theta)\frac{-\alpha}{r}(\sin(\theta)\sin(2\theta)+\cos(\theta)\cos(2\theta))+h_3\\
=&\frac{\alpha b(\theta)\chi(r)}{2r^2}\sin(\theta)+
\frac{r_cb(\theta)\chi(r)}{r^3}\left(\sin(\theta)\cos(\theta-\theta_c)+\frac{1}{2}\cos(\theta)\sin(\theta-\theta_c)\right)\\
&+\frac{r_cb'(\theta)\chi(r)}{2r^3}\sin(\theta)\sin(\theta-\theta_c)+h_3,
\end{align*}
where thanks to \eqref{chap2:est1}, \eqref{chap2:est2} and \eqref{chap2:est3}, $h_3\in H^0_{\delta+3}$ satisfies
$$\|h_3\|_{H^0_{\delta+3}} \lesssim \|\lambda\|\|b\|_{W^{1,2}}.$$
We calculate
\begin{align*}
-\frac{1}{2}\tau^{(2)}\partial_2\lambda=&-\frac{b(\theta)\chi(r)}{2r}
\left(-\frac{\alpha}{r}\sin(\theta)-\frac{r_c\cos(\theta-\theta_c)}{r^2}\sin(\theta)-\frac{r_c\sin(\theta-\theta_c)}{r^2}\cos(\theta)\right)\\
&-\frac{r_c}{2\alpha}(b(\theta)\cos(\theta-\theta_c)+b'(\theta)\sin(\theta-\theta_c))\frac{\chi(r)}{r^2}\left(\frac{-\alpha}{r}\sin(\theta)\right)+h_4\\
=&\frac{\alpha b(\theta)\chi(r)}{2r^2}\sin(\theta)+
\frac{r_cb(\theta)\chi(r)}{r^3}\left(\sin(\theta)\cos(\theta-\theta_c)+\frac{1}{2}\cos(\theta)\sin(\theta-\theta_c)\right)\\
&+\frac{r_cb'(\theta)\chi(r)}{2r^3}\sin(\theta)\sin(\theta-\theta_c)+h_4,
\end{align*}
where thanks to \eqref{chap2:est1}, \eqref{chap2:est2} and \eqref{chap2:est3}, $h_4\in H^0_{\delta+3}$ satisfies
$$\|h_4\|_{H^0_{\delta+3}} \lesssim \|\lambda\|\|b\|_{W^{1,\infty}}.$$
Therefore
$$-\frac{1}{2}\tau^{(2)}\partial_2\lambda-H^{(2)}_{i2}\partial_i \lambda=h_4-h_3\in H^0_{\delta+3}.$$
This concludes the proof of Proposition \eqref{chap2:prph2}.

\subsection{Proof of Proposition \eqref{chap2:prph3}}\label{chap2:sech3}
We first note
$$\partial_i(e^{-\lambda} H^{(3)})_{ij}+e^{-\lambda}H^{(3)}_{ij}\partial_j \lambda
=e^{-\lambda}\partial_i H^{(3)}_{ij},$$
$$\frac{1}{2}\partial_j \left(e^{-\lambda}\tau^{(3)}\right)-\frac{1}{2}e^{-\lambda}\tau^{(3)}\partial_j \lambda=\frac{1}{2}e^{-\lambda}\partial_j \tau^{(3)}-e^{-\lambda}\tau^{(3)}\partial_j \lambda.$$
Since $\wht \lambda\in H^2_{\delta+1}$, Proposition $\eqref{chap2:holder}$ implies that $\wht \lambda$ is bounded and consequently
$$|e^{-\lambda}|\lesssim (1+r^2)^{\frac{\alpha}{2}}.$$
Therefore Lemma \eqref{chap2:produit3} imply that the terms of the form
$e^{-\lambda}\frac{|B|+|B'|}{(1+r^2)}\nabla \wht \lambda$ belong to $H^0_{\delta+4-\alpha}$, with
$$\left\|e^{-\lambda}\frac{|B|+|B'|}{(1+r^2)}\nabla \wht \lambda\right\|_{H^0_{\delta+4-\alpha}} \lesssim \|B\|_{W^{1,2}(\m S^1)}\|\wht \lambda\|_{H^2_{\delta+1}}.$$
Since $\alpha$ is of size $\ep$, for $\ep$ small enough we have $\alpha<1$ and
\begin{equation}
\label{chap2:est4}
\left\|e^{-\lambda}\frac{|B|+|B'|}{(1+r^2)}\nabla \wht \lambda\right\|_{H^0_{\delta+3}} \lesssim \|B\|_{W^{1,2}(\m S^1)}\|\wht \lambda\|_{H^2_{\delta+1}}.
\end{equation}
The terms of the form $e^{-\lambda}\frac{|B|+|B'|}{1+r^2}\frac{r_c}{1+r^2}$ satisfy
$$\left|e^{-\lambda}\frac{|B|+|B'|}{1+r^2}\frac{r_c}{1+r^2}\right|\lesssim \frac{r_c(|B|+|B'|)}{(1+r^2)^{2-\frac{\alpha}{2}}},$$
so, for $\ep>0$ small enough so that $\delta+\alpha<0$ they belong to $H^0_{\delta+3}$ and satisfy
\begin{equation}
\label{chap2:est5}\left\|e^{-\lambda}\frac{|B|+|B'|}{1+r^2}\frac{r_c}{1+r^2}\right\|_{H^0_{\delta+3}}\lesssim \|B\|_{W^{1,2}(\m S^1)}|r_c|.
\end{equation}
Since $\chi'$ is smooth and compactly supported, the term of the form $e^{-\lambda}\frac{|B|+|B'|}{1+r^2}\chi'(r)$ belong to $H^0_{\delta+3}$ and satisfy
\begin{equation}
\label{chap2:est6}
\left\|e^{-\lambda}\frac{|B|+|B'|}{1+r^2}\chi'(r)\right\|_{H^0_{\delta+3}}
\lesssim \|B\|_{W^{1,2}(\m S^1)}.
\end{equation}
Consequently the terms which remain to calculate are the one which decay like $\frac{r^{\alpha}}{r^3}$.
We calculate
\begin{align*}
e^{-\lambda}\partial_i H^{(3)}_{i1}=&-e^{-\lambda}(1-\alpha)B'(\theta)(\sin(\theta)\sin(2\theta)+\cos(\theta)\cos(2\theta))\frac{\chi(r)}{r^3}\\
&-e^{-\lambda}\frac{B''(\theta)}{2}(-\sin(\theta)\cos(2\theta)+\cos(\theta)\sin(2\theta))\frac{\chi(r)}{r^3}+g_1\\
=&e^{-\lambda}\left(-(1-\alpha)B'(\theta)\cos(\theta)-\frac{1}{2}B''(\theta)\sin(\theta)\right)\frac{\chi(r)}{r^3}+g_1
\end{align*}
where we have used \eqref{calcmn} and where, thanks to the estimate \eqref{chap2:est6}, $g_1\in H^0_{\delta+3}$ satisfies
$$\|g_1\|_{H^0_{\delta+3}}\lesssim \|B\|_{W^{1,2}}.$$
We now calculate
\begin{align*}
&\frac{1}{2}e^{-\lambda}\partial_1 \tau^{(3)} 
-\tau^{(3)}e^{-\lambda}\partial_1 \lambda\\
&=-2\frac{1}{2}e^{-\lambda}B'(\theta)\frac{\chi(r)}{r^3}\cos(\theta)-\frac{1}{2}B''(\theta)\frac{e^{-\lambda}\chi(r)}{r^3}\sin(\theta)-B'(\theta)e^{-\lambda}\frac{\chi(r)}{r^2}\frac{-\alpha}{r}\cos(\theta)+g_2\\
&=(\alpha-1)e^{-\lambda}B'(\theta)\frac{\chi(r)}{r^3}\cos(\theta)
-\frac{1}{2}e^{-\lambda}B''(\theta)\frac{\chi(r)}{r^3}\sin(\theta)+g_2
\end{align*}
where thanks to the estimates \eqref{chap2:est4}, \eqref{chap2:est5} and \eqref{chap2:est6}, $g_2\in H^0_{\delta+3}$ satisfies
$$\|g_2\|_{H^0_{\delta+3}}\lesssim \|B\|_{W^{1,2}}.$$
Therefore 
$$\frac{1}{2}\partial_1 \tau^{(3)} 
-\frac{1}{2}\tau^{(3)}\partial_1 \lambda-\partial_i(e^{-\lambda} H^{(3)})_{1j}-e^{-\lambda}H^{(3)}_{1j}\partial_j \lambda=g_2-g_1\in H^0_{\delta+3}.$$
For $j=2$ we have
\begin{align*}
e^{-\lambda}\partial_i H^{(3)}_{i2}=&-e^{-\lambda}(1-\alpha)B'(\theta)(-\sin(\theta)\cos(2\theta)+\cos(\theta)\sin(2\theta))\frac{\chi(r)}{r^3}\\
&-e^{-\lambda}\frac{B''(\theta)}{2}(-\sin(\theta)\sin(2\theta)-\cos(\theta)\cos(2\theta))\frac{\chi(r)}{r^3}+g_3\\
=&e^{-\lambda}\left(-(1-\alpha)B'(\theta)\sin(\theta)+\frac{1}{2}B''(\theta)\cos(\theta)\right)\frac{\chi(r)}{r^3}+g_3
\end{align*}
where thanks to the estimate \eqref{chap2:est6}, $g_3\in H^0_{\delta+3}$ satisfies
$$\|g_3\|_{H^0_{\delta+3}}\lesssim \|B\|_{W^{1,2}}.$$
\begin{align*}
&\frac{1}{2}e^{-\lambda}\partial_2 \tau^{(3)} 
-e^{-\lambda}\tau^{(3)}\partial_2 \lambda\\
&=-2e^{-\lambda}\frac{1}{2}B'(\theta)\frac{\chi(r)}{r^3}\sin(\theta)+\frac{1}{2}B''(\theta)\frac{e^{-\lambda}\chi(r)}{r^3}\cos(\theta)-B'(\theta)e^{-\lambda}\frac{\chi(r)}{r^2}\frac{-\alpha}{r}\sin(\theta)+g_4\\
&=(\alpha-1)B'(\theta)\frac{\chi(r)e^{-\lambda}}{r^3}\sin(\theta)
+\frac{1}{2}B''(\theta)\frac{\chi(r)e^{-\lambda}}{r^3}\cos(\theta)+g_4
\end{align*}
where thanks to the estimates \eqref{chap2:est4}, \eqref{chap2:est5} and \eqref{chap2:est6}, $g_4\in H^0_{\delta+3}$ satisfies
$$\|g_4\|_{H^0_{\delta+3}}\lesssim \|B\|_{W^{1,2}}.$$
Therefore 
$$\frac{1}{2}\partial_2 \tau^{(3)} 
-\frac{1}{2}\tau^{(3)}\partial_2 \lambda-\partial_i(e^{-\lambda} H^{(3)})_{i2}-e^{-\lambda}H^{(3)}_{i2}\partial_i \lambda=g_4-g_3 \in H^0_{\delta+3}.$$
This conclude the proof of Proposition \eqref{chap2:prph3}.

\subsection{Proof of Proposition \eqref{chap2:prph4}}\label{chap2:sech4}
Recall that $f_j^{(1)}$ has been defined in \eqref{deffj}.
We calculate
\begin{equation}\label{chap2:int1}
\begin{split}
\int_{\m R^2} f^{(1)}_1 &=\int_{\m R^2} e^{\lambda}\left(-\dot{u}.\partial_1 u - A\Psi \partial_1\lambda +h^{(2)}_1+h^{(3)}_1\right) 
\\
&+\int (e^{\lambda}-1)\frac{\chi'(r)}{r}b(\theta)\cos(\theta)dx-\frac{r_c}{\alpha}\int e^{\lambda}\frac{\chi'(r)}{r^2}b(\theta)\sin(\theta-\theta_c)\cos(\theta)\partial_\theta \lambda\\
&+\pi \rho \cos(\eta),
\end{split}
\end{equation}
where we have used Proposition \ref{chap2:prph1} and the calculations
$$\frac{1}{2}\int e^{\lambda}A\partial_1 \Psi=-\frac{1}{2}\int e^{\lambda}A\Psi \partial_1\lambda,$$
\begin{align*}
\int e^{\lambda}\frac{\chi'(r)}{r}b(\theta)\cos(\theta)&=
\int (e^{\lambda}-1)\frac{\chi'(r)}{r}b(\theta)\cos(\theta)
+\left(\int \chi'(r)dr\right)\left(\int b(\theta)\cos(\theta)d\theta\right)\\
&=\int (e^{\lambda}-1)\frac{\chi'(r)}{r}b(\theta)\cos(\theta)+\pi \rho \cos(\eta),
\end{align*}
where we have used the definition of $b$ \eqref{chap2:defb} and the orthogonality condition \ref{ortob},
$$
\frac{r_c}{\alpha}\int e^{\lambda}\frac{\chi'(r)}{r^2}(b(\theta)\sin(\theta-\theta_c)\cos(\theta))'\\
=-\frac{r_c}{\alpha}\int e^{\lambda}\frac{\chi'(r)}{r^2}b(\theta)\sin(\theta-\theta_c)\cos(\theta)\partial_\theta \lambda .
$$
Similarly, we have
\begin{equation}\label{chap2:int2}\begin{split}
\int_{\m R^2} f^{(1)}_2 &=\int_{\m R^2} e^{\lambda}\left(-\dot{u}\partial_2 u - A\Psi \partial_2\lambda +h^{(2)}_2+h^{(3)}_2\right) \\
&+\int (e^{\lambda}-1)\frac{\chi'(r)}{r}b(\theta)\sin(\theta)dx-\frac{r_c}{\alpha}\int e^{\lambda}\frac{\chi'(r)}{r^2}b(\theta)\sin(\theta-\theta_c)\sin(\theta)\partial_\theta \lambda\\
&+\pi \rho \sin(\eta). 
\end{split}
\end{equation}

We calculate also
\begin{equation}\label{chap2:int3}\begin{split}
\int_{\m R^2} x_1f^{(1)}+x_2f^{(2)}=&\int_{\m R^2} e^{\lambda}\left(\dot{u}(r\partial_r u)
-A\Psi r\partial_r \lambda +x_1(h^{(2)}_1+h^{(3)}_1)+x_2(h^{(2)}_2+h^{(3)}_2)\right)\\
&+\int (e^{\lambda}-1)\chi'(r)b(\theta)
-\frac{r_c}{\alpha}\int e^{\lambda}\partial_\theta \lambda \frac{\chi'(r)}{r}b(\theta)\sin(\theta-\theta_c)\\
&+\left(\int \chi'(r)rdr\right)\int b(\theta)d\theta -A\int e^{\lambda}\Psi
\end{split}
\end{equation}
where we used $x_1\partial_1+x_2\partial_2= r\partial_r$ and the following calculations
$$\frac{1}{2}\int e^{\lambda}(x_1\partial_1 A\Psi +x_2 \partial_2 A \Psi)
=-\frac{1}{2}\int e^{\lambda}A\Psi(x_1\partial_1 \lambda+x_2 \partial_2 \lambda)-\int e^{\lambda}A\Psi,$$
\begin{align*}
&\int e^\lambda\left(x_1\frac{\chi'(r)}{r}b(\theta)\cos(\theta)+x_2\frac{\chi'(r)}{r}b(\theta)\sin(\theta)\right)\\
&=
\int e^{\lambda}\chi'(r)b(\theta)(\cos^2(\theta)+\sin^2(\theta))\\
&=
\left(\int \chi'(r)rdr\right)\left(\int b(\theta)d\theta\right)
+\int (e^{\lambda}-1)\chi'(r)b(\theta)
\end{align*}
\begin{align*}
&\int e^{\lambda}\frac{\chi'(r)}{r^2}\frac{r_c}{\alpha}\Bigg( x_1\left(b(\theta)\sin(\theta-\theta_c)\cos(\theta)\right)'
+x_2\left(b(\theta)\sin(\theta-\theta_c)\sin(\theta)\right)'\Bigg)\\
=&-\frac{r_c}{\alpha}\int e^{\lambda}\frac{\chi'(r)}{r}b(\theta)\sin(\theta-\theta_c)(-\cos(\theta)\sin(\theta)+\cos(\theta)\sin(\theta))\\
&-\frac{r_c}{\alpha}\int e^{\lambda}\partial_\theta \lambda \frac{\chi'(r)}{r}b(\theta)\sin(\theta-\theta_c)(\cos^2(\theta)+\sin^2(\theta))
\\
=& -\frac{r_c}{\alpha}\int e^{\lambda}\partial_\theta \lambda \frac{\chi'(r)}{r}b(\theta)\sin(\theta-\theta_c).
\end{align*}
Therefore, in view of \eqref{chap2:int1}, \eqref{chap2:int2} and \eqref{chap2:int3}, we have 
$$\int f^{(1)}_1 = \int f^{(1)}_2=\int x_1 f^{(1)}_1+x_2f^{(2)}_2=0$$
if and only if
the quantities $\rho \cos(\eta), \rho\sin(\eta)$ and $A$ are solutions of a linear system of the form
$$\left(\begin{array}{lll}1+O(\ep)&O(\ep)&O(\ep)\\
O(\ep)&1+O(\ep)&O(\ep)\\
O(\ep)&O(\ep)&1+O(\ep)
\end{array}\right)
\left( \begin{array}{l} \rho \cos(\eta)\\ \rho\sin(\eta) \\ A\end{array}\right)
=\left( \begin{array}{l} a_1\\ a_2 \\ a_3\end{array}\right),$$
where, since $\int \Psi=2\pi$,
\begin{align*}
a_1&=\frac{1}{\pi}\int \dot{u}\partial_1 u +O(\ep^2),\\
a_2&=\frac{1}{\pi}\int \dot{u}\partial_2 u +O(\ep^2),\\
a_3&=-\frac{1}{2\pi}\int \dot{u}r\partial_r u
+\frac{1}{2\pi}\left(\int \chi'(r)rdr\right)\int \wht b(\theta)d\theta + O(\ep^2).
\end{align*}
In the last equation we have used $\int b(\theta)d\theta = \int \wht b(\theta)d\theta$ to point out that this quantity does not depend on $\rho,\eta$.
For $\ep>0$ small enough, this system is invertible, therefore we can find a unique triplet $(\rho,\eta,A)$ in $\m R\times \m S^1\times \m R$ such that
the three integrals are zero, and we have
\begin{align*}
\rho \cos(\eta)&=\frac{1}{\pi}\int e^{\lambda}\dot{u}\partial_1 u +O(\ep^2),\\
\rho \sin(\eta)&=\frac{1}{\pi}\int e^{\lambda}\dot{u}\partial_2 u +O(\ep^2),\\
A&=-\frac{1}{2\pi}\int e^{\lambda}\dot{u}r\partial_r u
+\frac{C(\chi)}{2\pi}\int \wht b(\theta)d\theta + O(\ep^2).
\end{align*}
This concludes the proof of Proposition \eqref{chap2:prph4}

\section{The Lichnerowicz equation}\label{chap2:seclic}
Let $H$ and $\tau$ be given by
\begin{align*}
H&=e^{-\lambda}H^{(1)}+H^{(2)}+e^{-\lambda}H^{(3)},\\
\tau&= \tau^{(2)}+e^{-\lambda}\tau^{(3)}+A\Psi,
\end{align*}
with
$\rho,\eta,A$ and $H^{(1)}$ given by Proposition \eqref{chap2:prpeqh}. We recall
$H^{(1)}=J\frac{\chi(r)}{r^2}N_\theta +\wht H^{(1)},$ and
$$|A|+|J|+|\rho|+\|e^{-\lambda}\wht H^{(1)}\|_{\q H^1_{\delta+2}}\lesssim \ep.$$
\begin{prp}\label{chap2:prplamb}
 There exists a solution $\lambda'$ of \eqref{chap2:eqlamb} which can be written uniquely under the form
$$\lambda'=-\alpha'\chi(r)\ln(r)+r'_c\cos(\theta-\theta'_c)\frac{\chi(r)}{r}+\wht \lambda',$$
with $\wht \lambda'\in H^2_{\delta+1}$ and we have
\begin{align*}
\alpha'&=\frac{1}{4\pi}\int \left(\dot{u}^2+|\nabla u|^2\right) +O(\ep^2),\\
r'_c\cos(\theta'_c)&=\frac{1}{4\pi}\int x_1\left(\dot{u}^2+|\nabla u|^2\right) +O(\ep^2),\\
r'_c\sin(\theta'_c)&=\frac{1}{4\pi}\int x_2\left(\dot{u}^2+|\nabla u|^2\right) +O(\ep^2),
\end{align*}
and
$$\|\wht \lambda'\|_{H^2_{\delta+1}}
\lesssim 
\left\|\dot{u}^2 + |\nabla u|^2\right\|_{H^0_{\delta+3}}+
\ep^2.$$
\end{prp}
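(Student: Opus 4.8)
The plan is to write \eqref{chap2:eqlamb} as $\Delta\lambda'=f$ with
$$f=-\frac12\dot u^2-\frac12|\nabla u|^2-\Big(\frac12|H|^2-\frac14\tau^2\Big),$$
to show $f\in H^0_{\delta+3}$ with $\|f\|_{H^0_{\delta+3}}\lesssim\left\|\dot u^2+|\nabla u|^2\right\|_{H^0_{\delta+3}}+\ep^2$, and then to apply Corollary \eqref{chap2:coro1}. Since $\dot u^2,|\nabla u|^2\in H^0_{\delta+3}$ by hypothesis, everything reduces to showing $\frac12|H|^2-\frac14\tau^2\in H^0_{\delta+3}$ with a quadratic bound. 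This cannot come from crude size estimates: the dominant parts of $H$ and $\tau$ decay only like $1/r$, so $|H|^2$ and $\tau^2$ decay only like $1/r^2$, which does not belong to $H^0_{\delta+3}$ when $-1<\delta<0$. A cancellation is needed.

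The cancellation is algebraic. From the definitions of $H^{(2)},H^{(3)},\tau^{(2)},\tau^{(3)}$ in \eqref{chap2:defh2}--\eqref{chap2:deftau3} one reads off the identities
$$H^{(2)}=-\frac12\tau^{(2)}M_\theta-\frac{r_c}{\alpha}b(\theta)\sin(\theta-\theta_c)\frac{\chi(r)}{r^2}N_\theta,\qquad H^{(3)}=-\frac12\tau^{(3)}M_\theta-(1-\alpha)B(\theta)\frac{\chi(r)}{r^2}N_\theta.$$
Setting $\tau^M:=\tau^{(2)}+e^{-\lambda}\tau^{(3)}$ we therefore have $H=-\frac12\tau^M M_\theta+\mathcal R$ and $\tau=\tau^M+A\Psi$, with
$$\mathcal R:=e^{-\lambda}H^{(1)}-\Big(\frac{r_c}{\alpha}b(\theta)\sin(\theta-\theta_c)+(1-\alpha)e^{-\lambda}B(\theta)\Big)\frac{\chi(r)}{r^2}N_\theta.$$
Using $M_\theta:M_\theta=N_\theta:N_\theta=2$ and $M_\theta:N_\theta=0$, the $\frac14(\tau^M)^2$ term produced by expanding $\frac12|H|^2$ exactly cancels the one produced by $\frac14\tau^2$, and $M_\theta:\mathcal R=M_\theta:(e^{-\lambda}\wht H^{(1)})$, so that
$$\frac12|H|^2-\frac14\tau^2=\frac12|\mathcal R|^2-\frac12\tau^M\big(M_\theta:(e^{-\lambda}\wht H^{(1)})\big)-\frac12\tau^M A\Psi-\frac14(A\Psi)^2.$$
Each term on the right is quadratic in quantities of size $\ep$, so it remains to check that each lies in $H^0_{\delta+3}$. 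One uses: $e^{-\lambda}\wht H^{(1)}\in\q H^1_{\delta+2}$ with norm $\lesssim\ep$ (Proposition \eqref{chap2:prpeqh}), whose square lies in $H^0_{\delta+3}$ by Proposition \eqref{chap2:produit} since $\delta+3<2(\delta+2)+1$; the $N_\theta$-part of $\mathcal R$ and the piece $e^{-\lambda}J\frac{\chi(r)}{r^2}N_\theta$ of $e^{-\lambda}H^{(1)}$, whose squares decay like $1/r^4$ and like $(1+r^2)^{\alpha-2}$ respectively, which for $\ep$ small (so $|\alpha|\lesssim\ep$ and $\delta+2\alpha<0$, Proposition \eqref{chap2:holder} giving $\wht\lambda\in L^\infty$ hence $e^{\pm\lambda}\lesssim(1+r^2)^{\mp\alpha/2}$) are in $H^0_{\delta+3}$; the factor $\tau^M$, whose leading part $b(\theta)\chi(r)/r$ is a bounded angular function times $(1+r^2)^{-1/2}$, so that all products with it are controlled by Lemmas \eqref{chap2:produit2} and \eqref{chap2:produit3}; and $\Psi\in H^1_{\delta+2}$, which gives $\tau^M A\Psi\in H^0_{\delta+3}$ by Lemma \eqref{chap2:produit3} and $(A\Psi)^2\in H^0_{\delta+3}$ by Proposition \eqref{chap2:produit}. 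Throughout one uses the a priori bounds $\|\lambda\|\lesssim\ep$, $|A|+|J|+|\rho|+\|b\|_{W^{1,2}}+\|B\|_{W^{1,2}}\lesssim\ep$ and $r_c/\alpha\lesssim1$ coming from \eqref{chap2:aprio}. This yields $\big\|\frac12|H|^2-\frac14\tau^2\big\|_{H^0_{\delta+3}}\lesssim\ep^2$, hence the claimed bound on $f$.

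With $f\in H^0_{\delta+3}$ and $-1<\delta<0$, Corollary \eqref{chap2:coro1} gives a solution $\lambda'$ of $\Delta\lambda'=f$, unique in the form
$$\lambda'=\frac{1}{2\pi}\Big(\int f\Big)\chi(r)\ln r-\frac{1}{2\pi}\Big(\cos\theta\int fx_1+\sin\theta\int fx_2\Big)\frac{\chi(r)}{r}+\wht\lambda',$$
with $\wht\lambda'\in H^2_{\delta+1}$ and $\|\wht\lambda'\|_{H^2_{\delta+1}}\lesssim\|f\|_{H^0_{\delta+3}}$. Matching with the asserted decomposition identifies $\alpha'=-\frac1{2\pi}\int f$, $r'_c\cos\theta'_c=-\frac1{2\pi}\int fx_1$, $r'_c\sin\theta'_c=-\frac1{2\pi}\int fx_2$, and uniqueness is inherited from Corollary \eqref{chap2:coro1}. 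Writing $f=-\frac12(\dot u^2+|\nabla u|^2)+g$ with $g=-(\frac12|H|^2-\frac14\tau^2)$, and using $\int|g|\lesssim\|g\|_{H^0_{\delta+3}}$ and $\int r|g|\lesssim\|g\|_{H^0_{\delta+3}}$ (valid for $\delta>-1$, as in the proof of Corollary \eqref{chap2:coro1}) together with $\|g\|_{H^0_{\delta+3}}\lesssim\ep^2$, the contributions of $g$ to $\int f$, $\int fx_1$, $\int fx_2$ are $O(\ep^2)$, which yields the stated asymptotics for $\alpha'$, $r'_c\cos\theta'_c$, $r'_c\sin\theta'_c$; the estimate on $\wht\lambda'$ is exactly the one from Corollary \eqref{chap2:coro1}.

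The main obstacle is the second paragraph: spotting the identities $H^{(j)}=-\frac12\tau^{(j)}M_\theta+(N_\theta\text{-term})$ and exploiting $|M_\theta|^2=|N_\theta|^2=2$, $M_\theta:N_\theta=0$ to cancel the $1/r^2$ contributions exactly, and then verifying, one term at a time and with the appropriate product or multiplication lemma in each case, that all surviving terms decay fast enough to lie in $H^0_{\delta+3}$ with an $O(\ep^2)$ bound. Once $\frac12|H|^2-\frac14\tau^2\in H^0_{\delta+3}$ is established, the rest is a direct appeal to Corollary \eqref{chap2:coro1}.
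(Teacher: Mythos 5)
Your proposal is correct and follows essentially the same route as the paper: the key step is the exact algebraic cancellation of the $O(1/r^2)$ parts of $\tfrac12|H|^2$ and $\tfrac14\tau^2$ using ${M_\theta}^{ij}{M_\theta}_{ij}={N_\theta}^{ij}{N_\theta}_{ij}=2$ and ${M_\theta}^{ij}{N_\theta}_{ij}=0$, after which the surviving quadratic remainders are placed in $H^0_{\delta+3}$ with $O(\ep^2)$ norm via Proposition \ref{chap2:produit} and Lemmas \ref{chap2:produit2}--\ref{chap2:produit3}, and Corollary \ref{chap2:coro1} yields the expansion and the formulas for $\alpha'$, $r'_c\cos\theta'_c$, $r'_c\sin\theta'_c$. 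Your reformulation of the cancellation through the identities $H^{(j)}=-\tfrac12\tau^{(j)}M_\theta+(N_\theta\text{-term})$ is just a cleaner packaging of the explicit computation the paper carries out.
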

\begin{proof}
In order to apply Corollary \eqref{chap2:coro1} we have to check whether the right-hand side of \eqref{chap2:eqlamb} is in $H^0_{\delta+3}$.
To estimate $|e^{-\lambda}\wht H^{(1)}|^2$, we use Proposition \eqref{chap2:produit}, which yields
$|e^{-\lambda}\wht H^{(1)}|^2\in H^0_{\delta+3}$ with
\begin{equation}
\label{chap2:lest1}
\left\||e^{-\lambda}\wht H^{(1)}|^2\right\|_{ H^0_{\delta+3}}\lesssim 
\left\|e^{-\lambda}\wht H^{(1)}\right\|^2_{ H^1_{\delta+2}} \lesssim \ep^2.
\end{equation}
To estimate terms of the form $\frac{|b|}{1+r}e^{-\lambda}|\wht H^{(1)}|$ we use Lemma \eqref{chap2:produit2}. It yields
\begin{equation}
\label{chap2:lest2}\left\|\frac{|b|}{1+r}e^{-2\lambda}|\wht H^{(1)}|\right\|_{H^0_{\delta+3}}\lesssim \|b\|_{W^{1,2}(\m S^1)}\left\|e^{-\lambda}\wht H^{(1)}\right\|_{ H^1_{\delta+2}} \lesssim \ep^2.
\end{equation}
To estimate terms of the form $\frac{|B'|}{1+r^2}e^{-\lambda}|\wht H^{(1)}|$ and $\frac{r_c}{\alpha}\frac{|b'|}{1+r^2}e^{-\lambda}|\wht H^{(1)}|$ we use Lemma \eqref{chap2:produit3}, which yields
\begin{equation}
\label{chap2:lest3}
\begin{split}
\left\|\frac{|B|+|B'|}{1+r^2}e^{-\lambda}|\wht H^{(1)}|\right\|_{H^0_{\delta+3}}&\lesssim \|B\|_{W^{1,2}(\m S^1)}\left\|e^{-\lambda}\wht H^{(1)}\right\|_{ H^1_{\delta+2}} \lesssim \ep^2\\
\left\|\frac{r_c}{\alpha}\frac{|b|+|b'|}{1+r^2}e^{-\lambda}|\wht H^{(1)}|\right\|_{H^0_{\delta+3}}&\lesssim \|b\|_{W^{1,2}(\m S^1)}\left\|e^{-\lambda}\wht H^{(1)}\right\|_{ H^1_{\delta+2}} \lesssim \ep^2.
\end{split}
\end{equation}
In the same way, thanks to Proposition \ref{chap2:produit} and Lemma \ref{chap2:produit3} we estimate
\begin{equation}\label{chap2:lest4}
\|(A\Psi)^2\|_{H^0_{\delta+3}}\lesssim \ep^2, \quad \left\|\frac{|b|}{1+r}A\Psi\right\|_{H^0_{\delta+3}}\lesssim \ep^2, \quad ...
\end{equation}
We can also estimate
\begin{equation}
\label{chap2:lest5}
\left\|\frac{r_c^2}{\alpha^2}\frac{(|b|+|b'|)^2}{1+r^4}\right\|_{H^0_{\delta+3}}\lesssim \ep^2,\quad
\left\|e^{-2\lambda}\frac{(|B|+|B'|)^2}{1+r^4}\right\|_{H^0_{\delta+3}} \lesssim \ep^2, \quad ...
\end{equation}
We now calculate 
\begin{align*}
&\frac{1}{2}|H|^2-\frac{1}{4}\tau^2=
\frac{1}{2}\left(-b(\theta)\frac{\chi(r)}{2r}-\left(\frac{r_c}{2\alpha}(b(\theta)\sin(\theta-\theta_c))'+e^{-\lambda}\frac{B'(\theta)}{2}\right)\frac{\chi(r)}{r^2}\right)^2{M_\theta}^{ij}{M_{\theta}}_{ij}\\
&-b(\theta)\frac{\chi(r)}{2r}\left(-\frac{r_c}{\alpha}b(\theta)\sin(\theta-\theta_c)\frac{\chi(r)}{r^2}+e^{-\lambda}(J-(1-\alpha)B(\theta))\frac{\chi(r)}{r^2}\right){M_\theta}^{ij}{N_\theta}_{ij}\\
&-\frac{1}{4}\left(b(\theta)\frac{\chi(r)}{r}+\left(\frac{r_c}{\alpha}(b(\theta)\sin(\theta-\theta_c))'+e^{-\lambda}B'(\theta)\right)\frac{\chi(r)}{r^2}\right)^2+\wht h_1
\end{align*}
where thanks to the estimates \eqref{chap2:lest1}, \eqref{chap2:lest2}, \eqref{chap2:lest3}, \eqref{chap2:lest4}, and \eqref{chap2:lest5}, we have $\wht h_1 \in H^0_{\delta+3}$ with
$$\|\wht h_1 \|_{ H^0_{\delta+3}}\lesssim \ep^2.$$
Since ${M_\theta}^{ij}{M_{\theta}}_{ij}=2$ and ${M_\theta}^{ij}{N_\theta}_{ij}=0$ we obtain
$$\frac{1}{2}|H|^2-\frac{1}{4}\tau^2=\wht h_1\in H^0_{\delta+3}.$$
Consequently, we can solve \eqref{chap2:eqlamb} with Corollary \eqref{chap2:coro1}, and the solution $\lambda'$ can be written
$$\lambda'=-\alpha'\chi(r)\ln(r)+r_c\cos(\theta-\theta_c)\frac{\chi(r)}{r}+\wht \lambda',$$
with
\begin{align*}
\alpha'&= \frac{1}{2\pi}\int\left( \frac{1}{2}\dot{u}^2 + \frac{1}{2}|\nabla u|^2+\frac{1}{2}|H|^2-\frac{\tau^2}{4}\right)=\frac{1}{4\pi}\int \left(\dot{u}^2+|\nabla u|^2\right) +O(\ep^2),\\
r_c\cos(\theta_c)&=\frac{1}{2\pi}\int x_1\left( \frac{1}{2}\dot{u}^2 + \frac{1}{2}|\nabla u|^2+\frac{1}{2}|H|^2-\frac{\tau^2}{4}\right)=\frac{1}{4\pi}\int x_1\left(\dot{u}^2+|\nabla u|^2\right) +O(\ep^2),\\
r_c\sin(\theta_c)&=\frac{1}{2\pi}\int x_2\left( \frac{1}{2}\dot{u}^2 + \frac{1}{2}|\nabla u|^2+\frac{1}{2}|H|^2-\frac{\tau^2}{4}\right)=\frac{1}{4\pi}\int x_2\left(\dot{u}^2+|\nabla u|^2\right) +O(\ep^2),\\
\end{align*}
and $\wht \lambda' \in H^2_{\delta+1}$ such that
$$
\|\wht \lambda' \|_{H^2_{\delta+1}}\lesssim\left \|\frac{1}{2}\dot{u}^2 + \frac{1}{2}|\nabla u|^2+\frac{1}{2}|H|^2-\frac{\tau^2}{4}\right\|_{H^0_{\delta+3}}\lesssim 
\left\|\dot{u}^2 + |\nabla u|^2\right\|_{H^0_{\delta+3}}+
\ep^2.$$
This concludes the proof of Proposition \eqref{chap2:prplamb}.
\end{proof}

\section[Proof of the main theorem]{Proof of Theorem \eqref{chap2:main}}\label{chap2:secpf}
We find it more convenient to perform the fixed point with the quantities $(c_1,c_2)$
instead of $r_c,\theta_c$. We recall the relation
$$(c_1,c_2)=r_c(\cos(\theta_c),\sin(\theta_c)).$$
We note $X$ the Banach space
$$X=\m R\times \m R \times \m R\times H^2_{\delta+1}$$
equipped with the norm
$$\|\lambda\|_{X}=\|(\alpha, c_1, c_2, \wht \lambda)\|_{X}=|\alpha|+|c_1|+|c_2|+\|\wht \lambda\|_{H^2_{\delta+1}}.$$
We have constructed, for $\ep>0$ small enough, a map
$$F:X\rightarrow X$$
which maps $(\alpha, c_1, c_2, \wht \lambda)$ satisfying
$$\|(\alpha, c_1, c_2, \wht \lambda)\|_{X}=|\alpha|+|c_1|+|c_2|+\|\wht \lambda\|_{H^2_{\delta+1}}\lesssim \ep,$$
and $\alpha\geq \frac{1}{2}\alpha_{0}$ where
\begin{equation}
\label{chap2:conda}
\alpha_{0}=\frac{1}{4\pi}\left(\int \dot{u}^2+|\nabla u|^2\right),
\end{equation}
to $(\alpha', c'_1, c'_2, \wht \lambda')$ such that, for $\rho,\eta, A,  H^{(1)}$  given by Proposition \eqref{chap2:prpeqh}, if we note
\begin{align*}
\lambda&=-\alpha \chi(r)\ln(r)+ r_c\cos(\theta-\theta_c)\frac{\chi(r)}{r} +\wht \lambda,\\
H&= e^{-\lambda}H^{(1)}+H^{(2)}+e^{-\lambda}H^{(3)},\\
\tau&= \tau^{(2)}+e^{-\lambda}\tau^{(3)}+A\Psi,
\end{align*}
then $H$ satisfies
$$\partial_i H_{ij} +H_{ij}\partial_i \lambda  = -\dot{u}.\partial_j u + \frac{1}{2} \partial_j \tau-\frac{1}{2} \tau\partial_j \lambda,$$
and 
$$\lambda'=-\alpha' \chi(r)\ln(r)+ r'_c\cos(\theta-\theta'_c)\frac{\chi(r)}{r} +\wht \lambda'$$
is the solution of
$$\Delta \lambda' + \frac{1}{2}\dot{u}^2 + \frac{1}{2}|\nabla u|^2+\frac{1}{2}|H|^2-\frac{\tau^2}{4} = 0,$$
given by Proposition \eqref{chap2:prplamb}.
Proposition \eqref{chap2:prpeqh} implies
$$|\rho|+ |J|+|A|+\|\wht H^{(1)}\|_{\q H^1_{\delta+2+\alpha}} \lesssim \ep,$$
and Proposition \eqref{chap2:prplamb} implies
$$|r'_c|+|\alpha'|+\|\wht \lambda'\|_{H^2_{\delta+1}}\lesssim \ep.$$
In particular there exist $C_0$ such that
$$\|F(\alpha_0,0,0,0)\|_{X}=C_0\ep.$$

Next we show that $F$ is a contracting map in
$$B_X(0,2C_0 \ep)\cap\left\{ \alpha \geq \frac{\alpha_0}{2}\right\}.$$
We consider, for $i=1,2$ $(\alpha_i,(c_1)_i,(c_2)_i,\wht \lambda_i)$ such that
$$\|(\alpha_i,(c_1)_i,(c_2)_i,\wht \lambda_i)\|_{X}\leq 2C_0\ep, \quad \alpha_i \geq \frac{\alpha_0}{2}.$$
We note
$$(\alpha'_i,(c'_1)_i,(c'_2)_i,\wht \lambda'_i)=F(\alpha_i,(c_1)_i,(c_2)_i,\wht \lambda_i),$$
$$(r_c)_i(\cos(\theta_c)_i,\sin(\theta_c)_i)=((c_1)_i,(c_2)_i), \quad
(r'_c)_i(\cos(\theta'_c)_i,\sin(\theta'_c)_i)=((c'_1)_i,(c'_2)_i).$$
Since $\alpha'_i= \alpha_0 +O(\ep^2)$ we have for $\ep$ small enough
$$\alpha'_i \geq \frac{\alpha_0}{2}.$$
We note $\rho_i,\eta_i,A_i, J_i, \wht H^{(1)}_i$ the corresponding quantities given by Proposition \eqref{chap2:prpeqh}. The proof of the following lemma is postponed to the end of this section.

\begin{lm}\label{chap2:lml}
We have the estimate
$$|\alpha'_1-\alpha'_2|+|(c'_1)_1-(c'_1)_2|+ |(c'_2)_1-(c'_2)_2|+\|\wht  \lambda'_1-\wht \lambda'_2\|_{H^2_{\delta+1}}\lesssim \ep \|\lambda_1-\lambda_2\|_X.$$
\end{lm}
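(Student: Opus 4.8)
The plan is to use that $F$ is the composition of three operations --- forming $\lambda=-\alpha\chi(r)\ln r+r_c\cos(\theta-\theta_c)\frac{\chi(r)}{r}+\wht\lambda$ from $(\alpha,c_1,c_2,\wht\lambda)$, choosing $\rho,\eta,A$ and solving the momentum constraint for $H^{(1)}$ as in Proposition \ref{chap2:prpeqh}, and solving the Lichnerowicz equation for $\lambda'$ as in Proposition \ref{chap2:prplamb} --- and to propagate the difference of the two inputs through each of them, every estimate being the difference version of the corresponding estimate of Sections \ref{chap2:secmom} and \ref{chap2:seclic}. First I would record that, because the first three components of the input multiply the \emph{fixed} profiles $\chi(r)\ln r$, $\cos\theta\,\frac{\chi(r)}{r}$, $\sin\theta\,\frac{\chi(r)}{r}$ (using $r_c\cos(\theta-\theta_c)=c_1\cos\theta+c_2\sin\theta$), one has $|\lambda_1-\lambda_2|\lesssim\|\lambda_1-\lambda_2\|_X(1+\chi(r)\ln r)$ and similarly for $\nabla(\lambda_1-\lambda_2)$; since $\wht\lambda_i\in H^2_{\delta+1}\subset L^\infty$ and $|\alpha_i|\lesssim\ep$ this gives $|e^{\pm\lambda_1}-e^{\pm\lambda_2}|\lesssim(1+r^2)^{O(\ep)}|\lambda_1-\lambda_2|$. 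The only genuinely new feature compared with the proofs of Propositions \ref{chap2:prpeqh} and \ref{chap2:prplamb} is the logarithm in $|\lambda_1-\lambda_2|$; it is harmless because the quantities it multiplies always decay at least like $H^0_{\delta+3}$-functions and the exponent $\delta+3>2$ leaves enough decay margin to absorb any power of $\ln r$ --- with an $\ep$-independent constant in the $L^1$-type (integral) estimates, and in the $H^0$ estimates at a cost $O(1/\ep)$ that is cancelled by the $\ep$-smallness of the factor it multiplies.

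Next I would treat the parameters. By construction $v_i=(\rho_i\cos\eta_i,\rho_i\sin\eta_i,A_i)$ solve linear systems $(I+R_i)v_i=a_i$ as in the proof of Proposition \ref{chap2:prph4}, where $R_i=O(\ep)$ and both $a_i$ and $b_i=\rho_i\cos(\theta-\eta_i)+\wht b$ depend on $\lambda_i$; subtracting, $(I+R_1)(v_1-v_2)=(a_1-a_2)-(R_1-R_2)v_2$. The input-independent parts of $a_i$ --- the terms $\frac1\pi\int\dot u.\partial_j u$ and $\frac1{2\pi}(\int\chi'(r)r\,dr)\int\wht b\,d\theta$ --- are common to the two branches and cancel in the difference; using the first paragraph, the bound $\|\dot u.\partial_j u\|_{H^0_{\delta+3}}\lesssim\ep$, and Propositions \ref{chap2:prph1}--\ref{chap2:prph3} read for differences, one gets $|a_1-a_2|+|R_1-R_2|\lesssim\ep(\|\lambda_1-\lambda_2\|_X+\|b_1-b_2\|_{W^{1,2}})$; since $\|b_1-b_2\|_{W^{1,2}}\lesssim|v_1-v_2|$, $|v_2|\lesssim\ep$ and $I+R_1$ is uniformly invertible, absorbing the $|v_1-v_2|$ term yields $|v_1-v_2|+\|b_1-b_2\|_{W^{1,2}}\lesssim\ep\|\lambda_1-\lambda_2\|_X$. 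The analogous computation from \eqref{chap2:defJ}, which is again an integral of $f^{(1)}_j$, gives $|J_1-J_2|\lesssim\ep\|\lambda_1-\lambda_2\|_X$.

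Now the momentum constraint itself. From \eqref{deffj}, the two previous paragraphs, and Propositions \ref{chap2:prph1}--\ref{chap2:prph3} (for differences) one obtains $\|f^{(1)}_{j,1}-f^{(1)}_{j,2}\|_{H^0_{\delta+3}}\lesssim\|\lambda_1-\lambda_2\|_X$; here one cannot in general gain an extra $\ep$, but this is of no consequence since $\wht H^{(1)}$ will only enter the Lichnerowicz source in products with an $O(\ep)$ factor. Corollary \ref{chap2:coro2} then gives $\|\wht H^{(1)}_1-\wht H^{(1)}_2\|_{\q H^1_{\delta+2}}\lesssim\|\lambda_1-\lambda_2\|_X$, and multiplying by $e^{-\lambda_1}$ and adding the contribution of $(e^{-\lambda_1}-e^{-\lambda_2})\wht H^{(1)}_2$, treated as at the end of the proof of Proposition \ref{chap2:prpeqh}, gives $\|e^{-\lambda_1}\wht H^{(1)}_1-e^{-\lambda_2}\wht H^{(1)}_2\|_{\q H^1_{\delta+2}}\lesssim\|\lambda_1-\lambda_2\|_X$.

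Finally, for the Lichnerowicz step set $g_i=\frac12\dot u^2+\frac12|\nabla u|^2+\frac12|H_i|^2-\frac14\tau_i^2$, so that $\Delta\lambda'_i=-g_i$, $\alpha'_i=\frac1{2\pi}\int g_i$ and $(c'_k)_i=\frac1{2\pi}\int x_k g_i$. Since $\dot u^2$ and $|\nabla u|^2$ do not depend on the input, $g_1-g_2=\frac12(|H_1|^2-|H_2|^2)-\frac14(\tau_1^2-\tau_2^2)$, and, as in the proof of Proposition \ref{chap2:prplamb} (where $\frac12|H|^2-\frac14\tau^2=\wht h_1$ because the leading structured $\frac{\chi^2}{r^2}$ and $\frac{\chi^2}{r^3}$ contributions cancel algebraically, the rest being then $O(\ep^2)$ in $H^0_{\delta+3}$), every term of $g_1-g_2$ carries a factor of size $O(\ep)$, the remaining factors being controlled in the relevant norms by $\|\lambda_1-\lambda_2\|_X$ using the bounds of paragraphs one to three; the product and multiplication lemmas, with the $e^{\pm\lambda}$ and $\ln r$ factors absorbed by the weight margin exactly as in Section \ref{chap2:seclic}, therefore give $\|g_1-g_2\|_{H^0_{\delta+3}}\lesssim\ep\|\lambda_1-\lambda_2\|_X$. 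Since $\delta>-1$, the quantities $\int|g_1-g_2|$ and $\int(1+r)|g_1-g_2|$ are $\lesssim\|g_1-g_2\|_{H^0_{\delta+3}}$ as well, and Corollary \ref{chap2:coro1} being linear, $\wht\lambda'_1-\wht\lambda'_2\in H^2_{\delta+1}$ is the remainder in its decomposition applied to $-(g_1-g_2)$; adding the resulting bounds for $\alpha'$, $c'_1$, $c'_2$ and $\wht\lambda'$ gives the claimed estimate. I expect the delicate point to be precisely this last step: the $\ep$-smallness of the final bound rests simultaneously on the cancellation of the input-independent leading terms (paragraphs two and four) and on the algebraic cancellation of the structured terms inside $\wht h_1$, whereas the weighted-norm bookkeeping for the $e^{\pm\lambda}$ factors and the logarithm, although lengthy, is otherwise identical to that of Sections \ref{chap2:secmom} and \ref{chap2:seclic}.
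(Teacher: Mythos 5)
Your proposal follows essentially the same route as the paper: Lipschitz estimates for the parameters $\rho\cos\eta,\rho\sin\eta,A$ obtained by differencing the linear systems and cancelling the input-independent terms (the paper's Lemma \ref{chap2:lmrho}), then for $J$ and $e^{-\lambda}\wht H^{(1)}$ (Lemma \ref{chap2:estH}), and finally the difference of the Lichnerowicz sources fed into Corollary \ref{chap2:coro1}, with the $\ep$-gain coming from the quadratic structure of $\frac{1}{2}|H|^2-\frac{1}{4}\tau^2$ and the exact cancellation of its structured part. The only divergence is that you content yourself with an $O(\|\lambda_1-\lambda_2\|_X)$ bound on the $\wht H^{(1)}$ difference where the paper asserts $O(\ep\|\lambda_1-\lambda_2\|_X)$, and you correctly note that the weaker bound suffices because this quantity only enters the final source in products with $O(\ep)$ factors.
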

We are now in position to prove Theorem \eqref{chap2:main}. Thanks to Lemma \eqref{chap2:lml} there exists $C$ such that
$$\|F(\lambda_1)-F(\lambda_2)\|_X \leq C\ep \|\lambda_1-\lambda_2\|_X.$$
Consequently, by taking $\lambda_2=(\alpha_0,0,0,0)$ we have 
$$\forall \lambda \in B_X(0,2C_0 \ep)\cap\left\{ \alpha \geq \frac{\alpha_0}{2}\right\}, \quad
\|F(\lambda)-F(\alpha_0,0,0,0)\|\leq 2CC_0\ep^2.$$
Therefore, if $\ep$ is small enough such that $C\ep \leq 1$, the map $F$ sends
$B_X(0,2C_0 \ep)$ into itself. Moreover we already have noted that the condition $\alpha \geq \frac{\alpha_0}{2}$ is preserved by $F$ for $\ep$ small enough. Finally, for $C\ep<1$ the map $F$ is contracting, and the Picard fixed point Theorem yields the existence of a fixed point.

We now choose coordinates centered in the center of mass $(c_1,c_2)$. 
For these coordinates, we have $r_c=0$ and consequently
\begin{align*}
\lambda&= -\alpha \chi(r)\ln(r)+\wht \lambda,\\
H&=-(\wht b(\theta)+\rho\cos(\theta-\eta))\frac{\chi(r)}{2r}M_{\theta} + e^{-\lambda}\frac{\chi(r)}{(r)^2}\left((J-(1-\alpha)B(\theta))N_{\theta} -\frac{B'(\theta)}{2}M_{\theta} \right) + \wht H,\\
\tau&=(\wht b(\theta)+\rho \cos(\theta-\eta))\frac{\chi(r)}{r}+e^{-\lambda} B'(\theta)\frac{\chi(r)}{r^2}+A\Psi,
\end{align*}
The estimates of Propositions \eqref{chap2:prpeqh} and \eqref{chap2:prplamb} 
complete the proof of Theorem \eqref{chap2:main}.

To prove Lemma \ref{chap2:lml}, we first prove the following two lemmas.
\begin{lm}\label{chap2:lmrho}
We have the estimate
$$|\rho_1 \cos(\eta_1)-\rho_2\cos(\eta_2)|
+|\rho_1 \sin(\eta_1)-\rho_2\sin(\eta_2)|+|A_1-A_2|\lesssim \ep\|
\lambda_1-\lambda_2\|_{X}.$$
\end{lm}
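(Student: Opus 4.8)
The plan is to derive the estimate for the differences $\rho_i,\eta_i,A_i$ directly from the linear system they satisfy, which was established in the proof of Proposition \ref{chap2:prph4}. Recall that $(\rho_i\cos(\eta_i),\rho_i\sin(\eta_i),A_i)$ is the solution of
$$M_i\begin{pmatrix}\rho_i\cos(\eta_i)\\ \rho_i\sin(\eta_i)\\ A_i\end{pmatrix}=\begin{pmatrix}a_1^{(i)}\\ a_2^{(i)}\\ a_3^{(i)}\end{pmatrix},$$
where $M_i=\mathrm{Id}+O(\ep)$ and the right-hand sides $a_k^{(i)}$ are the integrals appearing in \eqref{chap2:int1}, \eqref{chap2:int2} and \eqref{chap2:int3} (without the $\pi\rho\cos(\eta)$, $\pi\rho\sin(\eta)$ and $A$ terms that went into $M_i$). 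Subtracting the two systems, one writes
$$M_1\left(V_1-V_2\right)=\left(M_2-M_1\right)V_2+\left(a^{(1)}-a^{(2)}\right),$$
where $V_i$ denotes the vector of unknowns. Since $M_1^{-1}=\mathrm{Id}+O(\ep)$ is bounded, it suffices to estimate $\|(M_2-M_1)V_2\|$ and $\|a^{(1)}-a^{(2)}\|$.

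First I would observe that $\|V_2\|\lesssim\ep$ by Proposition \ref{chap2:prpeqh}, and that every entry of $M_i$ and every $a_k^{(i)}$ is an integral whose integrand is built out of $e^{\lambda_i}$, $\partial\lambda_i$, $b_i$, $B$, $\chi'$, $\Psi$, $\dot u$ and $\nabla u$, with $\lambda_i$, $b_i$ appearing only through the $\lambda$-dependent quantities ($\alpha_i$, $(r_c)_i$, $(\theta_c)_i$, $\wht\lambda_i$, and the induced $\rho_i,\eta_i$ inside $b_i=\rho_i\cos(\theta-\eta_i)+\wht b$). The key elementary fact is that $e^{\lambda_1}-e^{\lambda_2}$, $\partial\lambda_1-\partial\lambda_2$ and $b_1-b_2$ are all controlled by $\|\lambda_1-\lambda_2\|_X$ (for the $b$ difference one needs a preliminary bound $|\rho_1-\rho_2|+|\eta_1-\eta_2|\lesssim\|\lambda_1-\lambda_2\|_X+\ep(\ldots)$, which itself follows by a first, coarse pass through the same linear system, or can be folded into a bootstrap). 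Then each difference $M_1-M_2$ and $a^{(1)}-a^{(2)}$ is estimated by expanding telescopically: replace one factor at a time by its difference, bound that difference in the appropriate weighted norm using Lemmas \ref{chap2:produit2} and \ref{chap2:produit3} and Proposition \ref{chap2:produit}, and bound the remaining factors by their $O(\ep)$ (or $O(1)$) norms exactly as in the proof of Proposition \ref{chap2:prph4}. Because $\chi'$ is compactly supported and the genuinely dangerous terms ($h_j^{(2)}$, $h_j^{(3)}$, $A\Psi\partial_j\lambda$) are already quadratically small, each such term produces a factor $\ep$ times $\|\lambda_1-\lambda_2\|_X$, giving
$$\|(M_2-M_1)V_2\|+\|a^{(1)}-a^{(2)}\|\lesssim\ep\,\|\lambda_1-\lambda_2\|_X.$$

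Combining, $\|V_1-V_2\|\lesssim\ep\|\lambda_1-\lambda_2\|_X$, which is exactly the claimed estimate. The main obstacle is bookkeeping rather than anything conceptual: one must handle the implicit dependence of $b_i$ (hence of the whole right-hand side) on $\rho_i,\eta_i$, which in turn are the unknowns — so the argument is genuinely a fixed-point/bootstrap estimate on the $3\times3$ system, not a direct computation. I would resolve this by first establishing the weaker bound $|\rho_1-\rho_2|+|A_1-A_2|+|\rho_1\eta_1-\rho_2\eta_2|\lesssim\ep\|\lambda_1-\lambda_2\|_X+o(1)\big(|\rho_1-\rho_2|+\cdots\big)$ and absorbing the last term for $\ep$ small, exactly the same mechanism used to invert $M_i$ in the first place. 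All the weighted-space product estimates needed are the ones already invoked in Sections \ref{chap2:sech2}, \ref{chap2:sech3} and \ref{chap2:sech4}, applied now to differences; no new analytic input is required.
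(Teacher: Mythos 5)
Your proposal is correct and follows essentially the same route as the paper: the paper also subtracts the defining integral identities \eqref{chap2:int1}--\eqref{chap2:int3} for the two solutions, estimates the differences $(h^{(2)}_j)_1-(h^{(2)}_j)_2$, $(h^{(3)}_j)_1-(h^{(3)}_j)_2$, etc.\ term by term with the same product lemmas, and then absorbs the resulting $\ep\bigl(|\rho_1\cos\eta_1-\rho_2\cos\eta_2|+\dots+|A_1-A_2|\bigr)$ terms for $\ep$ small --- which is exactly your inversion of $M_1=\mathrm{Id}+O(\ep)$ phrased as three coupled scalar inequalities. One small caution: you should track $|\rho_1\cos\eta_1-\rho_2\cos\eta_2|$ and $|\rho_1\sin\eta_1-\rho_2\sin\eta_2|$ rather than $|\rho_1-\rho_2|+|\eta_1-\eta_2|$ (the latter is not controllable when $\rho$ is small, and only the former is needed to bound $b_1-b_2$), but this is exactly how you set up the linear system, so it does not affect the argument.
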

\begin{lm}\label{chap2:estH}
We have the estimate
$$\|e^{-\lambda_1}\wht H^{(1)}_1-e^{-\lambda_2}\wht H^{(1)}_1\|_{H^1_{\delta+2}}
+|J_1-J_2|\lesssim \ep \|
\lambda_1-\lambda_2\|_{X}.$$
\end{lm}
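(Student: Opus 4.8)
The plan is to run the argument of Proposition \ref{chap2:prpeqh} on the difference, exploiting that the solution operator $f\mapsto(A,J,\wht K)$ provided by Corollary \ref{chap2:coro2} is linear and that the decomposition it produces is unique. The whole lemma then reduces to controlling, in $H^0_{\delta+3}$, the difference of the source terms
$$\Delta f_j:=f^{(1)}_{j,1}-f^{(1)}_{j,2},$$
where $f^{(1)}_{j,i}$ is given by \eqref{deffj} with $\lambda,\rho,\eta,A$ replaced by $\lambda_i,\rho_i,\eta_i,A_i$ and $b_i=\rho_i\cos(\theta-\eta_i)+\wht b$. Indeed, once $\|\Delta f_j\|_{H^0_{\delta+3}}\lesssim\ep\,\|\lambda_1-\lambda_2\|_X$ is known, then since each $f^{(1)}_{j,i}$ already satisfies $\int f^{(1)}_{j,i}=0$ and $\int(x_1f^{(1)}_{1,i}+x_2f^{(1)}_{2,i})=0$ (Proposition \ref{chap2:prph4}), Corollary \ref{chap2:coro2} applies to $\partial_i(H^{(1)}_1-H^{(1)}_2)_{ij}=\Delta f_j$ and, by uniqueness of the decomposition, identifies its output with $\big(0,\,J_1-J_2,\,\wht H^{(1)}_1-\wht H^{(1)}_2\big)$, giving $|J_1-J_2|+\|\wht H^{(1)}_1-\wht H^{(1)}_2\|_{\q H^1_{\delta+2}}\lesssim\|\Delta f_1\|_{H^0_{\delta+3}}+\|\Delta f_2\|_{H^0_{\delta+3}}\lesssim\ep\,\|\lambda_1-\lambda_2\|_X$.

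To estimate $\Delta f_j$, I write every term of \eqref{deffj} as a product and telescope: $a_1b_1-a_2b_2=a_1(b_1-b_2)+(a_1-a_2)b_2$. In each such product one factor is $O(\ep)$ by the a priori bounds $\|\lambda_i\|\lesssim\ep$, $|\rho_i|+|A_i|\lesssim\ep$, $\|b_i\|_{W^{1,2}}\lesssim\ep$, while the difference of the other factor is controlled either by $\|\lambda_1-\lambda_2\|_X$, or — for $|\rho_1\cos\eta_1-\rho_2\cos\eta_2|+|\rho_1\sin\eta_1-\rho_2\sin\eta_2|+|A_1-A_2|$, and hence for $\|b_1-b_2\|_{W^{1,2}(\m S^1)}$ — by $\ep\,\|\lambda_1-\lambda_2\|_X$ through Lemma \ref{chap2:lmrho}. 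Redoing verbatim the computations of Sections \ref{chap2:sech1}--\ref{chap2:sech3} with these telescopings, and using Lemma \ref{chap2:produit2}, Lemma \ref{chap2:produit3} and Proposition \ref{chap2:produit} exactly as there, shows that the differences between the two configurations of $h^{(2)}_j$, of $h^{(3)}_j$ and of $\tfrac12\partial_j\tau^{(2)}-\partial_iH^{(2)}_{ij}$ are all bounded in $H^0_{\delta+3}$ by $\ep\,\|\lambda_1-\lambda_2\|_X$. The one genuinely new point is the exponential prefactor: writing $f^{(1)}_{j,i}=e^{\lambda_i}g_{j,i}$ with $g_{j,i}=e^{-\lambda_i}f^{(1)}_{j,i}\in H^0_{\delta+3}$ of norm $\lesssim\ep$, we have $\Delta f_j=e^{\lambda_1}(g_{j,1}-g_{j,2})+(e^{\lambda_1}-e^{\lambda_2})g_{j,2}$; the first term is handled by the above and $|e^{\lambda_1}|\lesssim1$, and for the second we use $|e^{\lambda_1}-e^{\lambda_2}|\lesssim(e^{\lambda_1}+e^{\lambda_2})\,|\lambda_1-\lambda_2|$ together with
$$\lambda_1-\lambda_2=-(\alpha_1-\alpha_2)\chi(r)\ln r+\big((c_1)_1-(c_1)_2\big)\cos\theta\,\tfrac{\chi(r)}{r}+\big((c_2)_1-(c_2)_2\big)\sin\theta\,\tfrac{\chi(r)}{r}+(\wht\lambda_1-\wht\lambda_2),$$
where the identity $r_c\cos(\theta-\theta_c)=c_1\cos\theta+c_2\sin\theta$ makes the middle terms manifestly $\lesssim\|\lambda_1-\lambda_2\|_X\,\tfrac{\chi(r)}{r}$, and the logarithm is harmless because $(1+|\ln r|)(1+r^2)^{-\alpha_i/2}$ is bounded (even decaying, since $\ln r\ll r^{\alpha_i/2}$ for $r$ large), so Lemma \ref{chap2:produit2} applies. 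This yields $\|\Delta f_j\|_{H^0_{\delta+3}}\lesssim\ep\,\|\lambda_1-\lambda_2\|_X$, hence the bound on $|J_1-J_2|$ and on $\|\wht H^{(1)}_1-\wht H^{(1)}_2\|_{\q H^1_{\delta+2}}$ as above.

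It remains to upgrade to the weighted quantity. I write $e^{-\lambda_1}\wht H^{(1)}_1-e^{-\lambda_2}\wht H^{(1)}_2=e^{-\lambda_1}\big(\wht H^{(1)}_1-\wht H^{(1)}_2\big)+\big(e^{-\lambda_1}-e^{-\lambda_2}\big)\wht H^{(1)}_2$. The first summand is estimated exactly as $e^{-\lambda}\wht H^{(1)}$ was in the final part of the proof of Proposition \ref{chap2:prpeqh}: the three contributions $e^{-\lambda_1}\nabla(\wht H^{(1)}_1-\wht H^{(1)}_2)$, $\tfrac{\alpha_1\chi(r)}{r}e^{-\lambda_1}(\wht H^{(1)}_1-\wht H^{(1)}_2)$ and $e^{-\lambda_1}(\wht H^{(1)}_1-\wht H^{(1)}_2)\nabla\wht\lambda_1$, controlled via Lemma \ref{chap2:produit2} and Proposition \ref{chap2:produit}, are $\lesssim\ep\,\|\lambda_1-\lambda_2\|_X$. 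For the second summand one uses $\|\wht H^{(1)}_2\|_{\q H^1_{\delta+2}}\lesssim\ep$ from Proposition \ref{chap2:prpeqh}, $|e^{-\lambda_1}-e^{-\lambda_2}|\lesssim(e^{-\lambda_1}+e^{-\lambda_2})|\lambda_1-\lambda_2|$ with the $\ln r$ growth absorbed into the positive weight gap $\alpha_i>0$, and the same distribution of $\nabla$ onto either factor, to get $\lesssim\ep\,\|\lambda_1-\lambda_2\|_X$. Adding the two summands proves the lemma.

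The main obstacle is the bookkeeping of the source difference: organising the expansion of \eqref{deffj} so that every difference factor is paired with a genuinely small partner (in particular treating the trigonometric dependence through $b_i$ via Lemma \ref{chap2:lmrho}), and controlling the differences of the exponential weights $e^{\pm\lambda_i}$ in the weighted Sobolev norms despite the logarithmically growing term $-(\alpha_1-\alpha_2)\chi(r)\ln r$ in $\lambda_1-\lambda_2$; once this is arranged, everything else is the product-rule machinery already used throughout Section \ref{chap2:secmom}.
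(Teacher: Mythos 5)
Your treatment of $J_1-J_2$ and of the \emph{unweighted} difference $\wht H^{(1)}_1-\wht H^{(1)}_2$ is essentially the paper's: linearity and uniqueness of the decomposition from Corollary \ref{chap2:coro2}, telescoping of the source \eqref{deffj}, and Lemma \ref{chap2:lmrho} to handle the dependence through $b_i$. The gap is in your final step, the ``upgrade to the weighted quantity''. Recall why $e^{-\lambda}\wht H^{(1)}\in\q H^1_{\delta+2}$ in Proposition \ref{chap2:prpeqh}: there $\wht H^{(1)}$ lies in $\q H^1_{\delta+2+\alpha}$, with exactly $\alpha$ of extra decay to spend against the \emph{growing} factor $e^{-\lambda}\lesssim(1+r^2)^{\alpha/2}$. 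In your argument you estimate $\Delta f_j$ only in $H^0_{\delta+3}$ (you use $|e^{\lambda_1}|\lesssim 1$), so Corollary \ref{chap2:coro2} gives $\wht H^{(1)}_1-\wht H^{(1)}_2\in\q H^1_{\delta+2}$ with no extra decay; multiplying by $e^{-\lambda_1}$ then lands you in $\q H^1_{\delta+2-\alpha_1}$, strictly weaker than the claimed $\q H^1_{\delta+2}$. This cannot be repaired by tracking weights more carefully: the best you can extract for the difference of sources is $H^0_{\delta+3+\min(\alpha_1,\alpha_2)}$ (the two configurations have different exponents $\alpha_1\neq\alpha_2$), which after multiplication by $e^{-\lambda_1}$ still leaves an uncontrolled loss $(1+r^2)^{(\alpha_1-\alpha_2)_+/2}$, unbounded no matter how small $\alpha_1-\alpha_2$ is. The same defect affects your second summand: $|e^{-\lambda_1}-e^{-\lambda_2}|$ grows like $(1+r^2)^{\max(\alpha_1,\alpha_2)/2}\ln r$, and the decay of $\wht H^{(1)}_2\in\q H^1_{\delta+2+\alpha_2}$ only compensates $\alpha_2$ of it.

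The paper sidesteps this entirely by never forming the undifferentiated difference of exponentials. It writes the divergence equation satisfied by the weighted difference $e^{-\lambda_1}\wht H^{(1)}_1-e^{-\lambda_2}\wht H^{(1)}_2$ itself and applies Corollary \ref{chap2:coro2} at the unshifted weight. The telescoping there produces two kinds of terms: $\bigl(e^{-\lambda_1}\wht H^{(1)}_1-e^{-\lambda_2}\wht H^{(1)}_2\bigr)\partial_j\lambda_1$, which is absorbed into the left-hand side since $\|\partial\lambda_1\|$ is $O(\ep)$, and $e^{-\lambda_2}\wht H^{(1)}_2\,\partial_j(\lambda_1-\lambda_2)$, which is harmless because $\partial_j(\lambda_1-\lambda_2)$ decays like $1/r$ (the derivative kills the logarithm) and $e^{-\lambda_2}\wht H^{(1)}_2\in\q H^1_{\delta+2}$ is already controlled by Proposition \ref{chap2:prpeqh}. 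You need to difference at the level of the equation for the weighted tensor, not at the level of its solution; as written, your last paragraph does not close.
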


\begin{proof}[Proof of Lemma \eqref{chap2:lmrho}]
The quantities $\rho_i\cos(\theta_i), \rho_i\sin(\theta_i), A_i$ are given by the expressions \eqref{chap2:int1}, \eqref{chap2:int2}, \eqref{chap2:int3}. Therefore we have
\begin{equation}\label{chap2:express}\begin{split}
&\pi(\rho_1\cos(\eta_1)-\rho_2\cos(\eta_2))\\=&\int_{\m R^2} \left(e^{\lambda_1}-e^{\lambda_2}\right)\dot{u}\partial_1 u + e^{\lambda_1}A_1\Psi \partial_1\lambda_1-e^{\lambda_2}A_2\Psi \partial_1\lambda_2\\
&-\int e^{\lambda_1}(h^{(2)}_1)_1-e^{\lambda_2}(h^{(2)}_1)_2+e^{\lambda_1}(h^{(3)}_1)_1-e^{\lambda_2}(h^{(3)}_1)_2\\
&-\int (e^{\lambda_1}-e^{\lambda_2})\frac{\chi'(r)}{r}b_1(\theta)\cos(\theta)dx
+(e^{\lambda_2}-1)\frac{\chi'(r)}{r}(\rho_1\cos(\theta-\eta_1)-\rho_2\cos(\theta-\eta_2))\\
&+\int \frac{(r_c)_1}{\alpha_1}
 e^{\lambda_1}\frac{\chi'(r)}{r}b_1(\theta)\sin(\theta-(\theta_c)_1)\cos(\theta)\partial_\theta \lambda_1-\frac{(r_c)_2}{\alpha_2}
  e^{\lambda_2}\frac{\chi'(r)}{r}b_2(\theta)\sin(\theta-(\theta_c)_2)\cos(\theta)\partial_\theta \lambda_2,
\end{split}
\end{equation}
and a similar expression for $\rho_1\sin(\eta_1)-\rho_2\sin(\eta_2)$
and $A_1-A_2$.

We estimate first 
$(h^{(2)}_j)_1-(h^{(2)}_j)_2,$
where the quantities $(h^{(2)}_j)_i$ are defined by \eqref{chap2:defhj2}. We have
$$(h^{(2)}_j)_1-(h^{(2)}_j)_2=-\frac{1}{2}\tau^{(2)}_1\partial_j(\lambda_1-\lambda_2)+\frac{1}{2}(\tau^{(2)}_1-\tau^{(2)}_2)\partial_j \lambda_2
-(H_{ij}^{(2)})_1\partial_i (\lambda_1-\lambda_2)
+\left((H_{ij}^{(2)})_1-(H_{ij}^{(2)})_2\right)\partial_i \lambda_2.$$
We calculate
\begin{align*}
&\tau^{(2)}_1-\tau^{(2)}_2=(\rho_1\cos(\theta-\theta_1)-\rho_2\cos(\theta-\theta_2))\\+&\left(\frac{(r_c)_1}{\alpha_1}((\rho_1\cos(\theta-\eta_1)+\wht b(\theta))\cos(\theta-(\theta_c)_1))'-\frac{(r_c)_2}{\alpha_2}((\rho_2\cos(\theta-\eta_2)+\wht b(\theta))\cos(\theta-(\theta_c)_2))'\right)\frac{\chi(r)}{r^2}.
\end{align*}
We have a similar expression for $(H_{ij}^{(2)})_1-(H_{ij}^{(2)})_2$.
Therefore we have
\begin{equation}
\label{chap2:esthh}
\left\|(h^{(2)}_j)_1-(h^{(2)}_j)_2\right\|_{H^0_{\delta+3}}\lesssim \ep|\rho_1\cos(\eta_1)-\rho_2\cos(\eta_2)|+\ep|\rho_1\sin(\eta_1)-\rho_2\sin(\eta_2)|
+\ep\|
\lambda_1-\lambda_2\|_{X}.
\end{equation}
We now estimate $(h^{(3)}_j)_1-(h^{(3)}_j)_2$, where the quantities $(h^{(3)}_j)_i$ are defined by \eqref{chap2:defgj2}. The function $\tau^{(3)}$ does not depend on the index $i=1,2$. We calculate
$$H^{(3)}_1-H^{(3)}_2=\frac{\chi(r)}{r^2}(\alpha_1-\alpha_2)B(\theta)N_\theta.$$
Therefore we obtain
\begin{equation}
\label{chap2:esthh3}
\left\|(h^{(3)}_j)_1-(h^{(3)}_j)_2\right\|_{H^0_{\delta+3}}\lesssim 
\ep\|
\lambda_1-\lambda_2\|_{X}.
\end{equation}
The estimates for the other terms of \eqref{chap2:express} are similar. Therefore \eqref{chap2:express}, together with the estimates \eqref{chap2:esthh} and \eqref{chap2:esthh3} yields
$$| \rho_1\cos(\eta_1)-\rho_2\cos(\eta_2)|\lesssim \ep\left( |\rho_1\cos(\eta_1)-\rho_2\cos(\eta_2)|+|\rho_1\sin(\eta_1)-\rho_2\sin(\eta_2)|+|A_1-A_2|\right) + \ep \|\lambda_1-\lambda_2\|_X.$$
Similarly we obtain
$$| \rho_1\sin(\eta_1)-\rho_2\sin(\eta_2)|\lesssim \ep\left( |\rho_1\cos(\eta_1)-\rho_2\cos(\eta_2)|+|\rho_1\sin(\eta_1)-\rho_2\sin(\eta_2)|+|A_1-A_2|\right) + \ep \|\lambda_1-\lambda_2\|_X$$
$$| A_1-A_2|\lesssim \ep\left( |\rho_1\cos(\eta_1)-\rho_2\cos(\eta_2)|+|\rho_1\sin(\eta_1)-\rho_2\sin(\eta_2)|+|A_1-A_2|\right) + \ep \|\lambda_1-\lambda_2\|_X$$
and consequently
$$|\rho_1 \cos(\theta_1)-\rho_2\cos(\theta_2)|
+|\rho_1 \sin(\theta_1)-\rho_2\sin(\theta_2)|+|A_1-A_2|\lesssim \ep\|
\lambda_1-\lambda_2\|_{X},$$
which concludes the proof of Lemma \eqref{chap2:lmrho}.
\end{proof}

\begin{proof}[Proof of Lemma \eqref{chap2:estH}]
We compare first $J_1$ and $J_2$ thanks to the formula \eqref{chap2:defJ}.
We obtain
\begin{align*}
J_1-J_2 &=\frac{1}{2\pi}
\int_{\m R^2} -\left(e^{\lambda_1}-e^{\lambda_2}\right)\dot{u}\partial_\theta u
-(e^{\lambda_1}A_1-e^{\lambda_2}A_2)\Psi \partial_\theta \lambda_1-A_2e^{\lambda_2}\Psi \partial_\theta(\lambda_1-\lambda_2) \\ 
&+\frac{\rho_1 (r_c)_1}{\alpha_1}\sin(\eta-(\theta_c)_1)-\frac{\rho_2 (r_c)_2}{\alpha_2}\sin(\eta-(\theta_c)_2)+s.t.
\end{align*}
where the notation $s.t.$ stands for similar terms.
Therefore,  we obtain
$$|J_1-J_2|\lesssim \ep\|\lambda_1-\lambda_2\|_X + 
|\rho_1\cos(\eta_1)-\rho_2\cos(\eta_2)|+|\rho_1\sin(\eta_1)-\rho_2\sin(\eta_2)|+|A_1-A_2|$$
and thanks to Lemma \eqref{chap2:lmrho} we infer
\begin{equation}
\label{chap2:estJJ}
|J_1-J_2|\lesssim \ep\|\lambda_1-\lambda_2\|_X.
\end{equation}
We now write the equation satisfied by $e^{-\lambda_1}\wht H^{(1)}_1-e^{-\lambda_2}\wht H^{(1)}_2$
\begin{align*}
&\partial_i \left(e^{-\lambda_1}\wht H^{(1)}_1-e^{-\lambda_2}\wht H^{(1)}_2\right)_{ij}\\
= & e^{-\lambda_1}(\wht H^{(1)}_1)_{ij}\partial_j \lambda_1-e^{-\lambda_2}(\wht H^{(1)}_2)_{ij}\partial_j \lambda_2\\
& + (e^{-\lambda_1}J_1-e^{\lambda_2}J_2)\partial_i\left(\frac{\chi(r)}{r^2}N_\theta\right)+ e^{\lambda_1}\partial_i(H^{(1)}_1)_{ij}- e^{\lambda_2}\partial_i(H^{(1)}_2)_{ij}\\
=&\left(e^{-\lambda_1}(\wht H^{(1)}_1)_{ij}-e^{-\lambda_2}(\wht H^{(1)}_2)_{ij}\right)\partial_j \lambda_1+e^{-\lambda_2}(\wht H^{(1)}_2)_{ij}\partial_j(\lambda_1-\lambda_2)\\
&+(e^{-\lambda_1}J_1-e^{\lambda_2}J_2)\partial_i\left(\frac{\chi(r)}{r^2}N_\theta\right)+(A_1-A_2)\partial_j \Psi+(h^{(2)}_j)_1-(h^{(2)}_j)_2+s.t.
\end{align*}
 Consequently, Corollary \eqref{chap2:coro2} yields
$$\|e^{-\lambda_1}\wht H^{(1)}_1-e^{-\lambda_2}\wht H^{(1)}_2\|_{\q H^1_{\delta+2}}
\lesssim \ep \|e^{-\lambda_1}\wht H^{(1)}_1-e^{-\lambda_2}\wht H^{(1)}_2\|_{\q H^1_{\delta+2}}+|J_1-J_2|+ \ep\|\lambda_1-\lambda_2\|_X,$$
and thanks to \eqref{chap2:estJJ}
$$\|e^{-\lambda_1}\wht H^{(1)}_1-e^{-\lambda_2}\wht H^{(1)}_2\|_{\q H^1_{\delta+2}}
\lesssim  \ep\|\lambda_1-\lambda_2\|_X,$$
which concludes the proof of Lemma \eqref{chap2:estH}.
\end{proof}

\begin{proof}[Proof of Lemma \eqref{chap2:lml}]
In view of \eqref{chap2:eqlamb} we have
$$\Delta(\lambda'_1-\lambda'_2)=-\frac{1}{2}|H_1|^2+\frac{1}{4}\tau_1^2
+\frac{1}{2}|H_2|^2-\frac{1}{4}\tau_2^2.$$
The right-hand side is in $H^0_{\delta+3}$ and satisfies
\begin{align*}
&\left\|\frac{1}{2}|H_1|^2-\frac{1}{4}\tau_1^2
-\frac{1}{2}|H_2|^2-\frac{1}{4}\tau_2^2\right\|_{H^0_{\delta+3}}\\
&\lesssim \ep\left(\|e^{-\lambda_1}\wht H^{(1)}_1-e^{-\lambda_2}\wht H^{(1)}_2\|_{H^1_{\delta+2}}+|J_1-J_2|\right)+\ep \|\lambda_1-\lambda_2\|_X\\
&\lesssim \ep \|\lambda_1-\lambda_2\|_X,
\end{align*}
where we have used Lemma \ref{chap2:estH} in the last inequality.
Therefore Corollary \eqref{chap2:coro1} allows us to write
$$\lambda'_1-\lambda'_2=-(\alpha'_1-\alpha'_2)\chi(r)\ln(r)
 +((r'_c)_1\cos(\theta-(\theta'_c)_1)-(r'_c)_2\cos(\theta-(\theta'_c)_2))\frac{\chi(r)}{r} +\wht \lambda'_1-\wht \lambda'_2,$$
 with
 $$|\alpha'_1-\alpha'_2|+|(c'_1)_1-(c'_1)_2|+ |(c'_2)_1-(c'_2)_2|+\|\wht  \lambda'_1-\wht \lambda'_2\|_{H^2_{\delta+1}}\lesssim \ep \|\lambda_1-\lambda_2\|_X.$$
 This concludes the proof of Lemma \eqref{chap2:lml}.
\end{proof}

\bibliographystyle{plain}
\bibliography{contrainte}

\end{document}